\documentclass[11pt]{amsart}
\pdfoutput=1

\usepackage[utf8]{inputenc}
\usepackage[english]{babel}
\usepackage{amssymb}
\usepackage{stmaryrd}
\usepackage{xcolor}
\usepackage{microtype}
\usepackage{mathtools}
\usepackage{enumitem}

\usepackage[margin=1in, marginparwidth=.75in]{geometry}

\usepackage[numbers,compress]{natbib}

\usepackage[bookmarksdepth=3]{hyperref}
\hypersetup{
  pdftitle={Multisymplecticity in finite element exterior calculus},
  pdfauthor={Ari Stern and Enrico Zampa},
  pdfsubject={MSC 2020: 65N30 (Primary), 70S05 (Secondary)},
  bookmarksopen=false
}

\usepackage[capitalize,nameinlink]{cleveref}

\theoremstyle{plain} 
\newtheorem{theorem}             {Theorem}  [section]
\newtheorem{lemma}      [theorem]{Lemma}
\newtheorem{corollary}  [theorem]{Corollary}
\newtheorem{proposition}[theorem]{Proposition}

\theoremstyle{definition}
\newtheorem{definition} [theorem]{Definition}
\newtheorem{example}    [theorem]{Example}

\theoremstyle{remark}
\newtheorem{remark}     [theorem]{Remark}

\AddToHook{env/lemma/begin}{\crefalias{theorem}{lemma}}
\AddToHook{env/corollary/begin}{\crefalias{theorem}{corollary}}
\AddToHook{env/proposition/begin}{\crefalias{theorem}{proposition}}
\AddToHook{env/definition/begin}{\crefalias{theorem}{definition}}
\AddToHook{env/example/begin}{\crefalias{theorem}{example}}
\AddToHook{env/remark/begin}{\crefalias{theorem}{remark}}

\newcommand{\ringhat}[1]{\mathchoice
  {\smash{\mathring{\widehat{#1}}}}%
  {\vphantom{{\mathring{\widehat{#1}}}}\smash{\mathring{\widehat{#1}}}}%
  {\vphantom{{\mathring{\widehat{#1}}}}\smash{\mathring{\widehat{#1}}}}%
  {\vphantom{{\mathring{\widehat{#1}}}}\smash{\mathring{\widehat{#1}}}}%
}

\newcommand{\av}[1]{\{\!\!\{#1\}\!\!\}}
\newcommand{\bigav}[1]{\bigl\{\!\!\bigl\{#1\bigr\}\!\!\bigr\}}
\newcommand{\Bigav}[1]{\Bigl\{\!\!\Bigl\{#1\Bigr\}\!\!\Bigr\}}
\newcommand{\jump}[1]{\llbracket #1 \rrbracket}

\DeclareMathSymbol{\star}{\mathord}{letters}{"3F}

\title{Multisymplecticity in finite element exterior calculus}
\author{Ari Stern}
\address{Department of Mathematics, Washington University in St.~Louis}
\email{stern@wustl.edu}

\author{Enrico Zampa}
\address{Department of Mathematics, University of Trento}
\curraddr{Department of Mathematics, University of Vienna}
\email{enrico.zampa@univie.ac.at}

\begin{document}

\begin{abstract}
  We consider the application of finite element exterior calculus
  (FEEC) methods to a class of canonical Hamiltonian PDE systems
  involving differential forms. Solutions to these systems satisfy a
  local \emph{multisymplectic conservation law}, which generalizes the
  more familiar symplectic conservation law for Hamiltonian systems of
  ODEs, and which is connected with physically-important reciprocity
  phenomena, such as Lorentz reciprocity in electromagnetics. We
  characterize hybrid FEEC methods whose numerical traces satisfy a
  version of the multisymplectic conservation law, and we apply this
  characterization to several specific classes of FEEC methods,
  including conforming Arnold--Falk--Winther-type methods and various
  hybridizable discontinuous Galerkin (HDG) methods. Interestingly,
  the HDG-type and other nonconforming methods are shown, in general,
  to be multisymplectic in a stronger sense than the conforming FEEC
  methods. This substantially generalizes previous work of McLachlan
  and Stern [Found.\ Comput.\ Math., 20 (2020), pp.~35--69] on the
  more restricted class of canonical Hamiltonian PDEs in the
  de~Donder--Weyl ``grad-div'' form.
\end{abstract}

\maketitle

\section{Introduction}

\subsection{Motivation and background}

In classical mechanics, a canonical Hamiltonian system of ODEs has the
form
\begin{equation}
  \label{eq:hamiltonian_ODE}
  \begin{bmatrix}
    & - \mathrm{d} / \mathrm{d} t \\
    \mathrm{d} / \mathrm{d} t  &
  \end{bmatrix}
  \begin{bmatrix}
    q \\
    p
  \end{bmatrix} =
  \begin{bmatrix}
    \partial H / \partial q \\
    \partial H / \partial p
  \end{bmatrix},
\end{equation}
where the function $ H = H ( t, q, p ) $ is called the Hamiltonian of
the system. In 1935, \citet{deDonder1935} and \citet{Weyl1935}
extended the canonical Hamiltonian framework to systems of PDEs having
the form
\begin{equation}
  \label{eq:ddw}
  \begin{bmatrix}
    & - \operatorname{div} \\
    \operatorname{grad} &
  \end{bmatrix}
  \begin{bmatrix}
    u \\
    \sigma
  \end{bmatrix} =
  \begin{bmatrix}
    \partial H / \partial u \\
    \partial H / \partial \sigma
  \end{bmatrix},
\end{equation}
where $ H = H ( x, u, \sigma ) $. The de~Donder--Weyl theory includes
many types of PDEs arising in classical field theory, including both
Laplace-type operators and, if $ \operatorname{grad} $ and
$ \operatorname{div} $ are taken in a spacetime sense, wave-type
operators. In 2006, \citet{Bridges2006} extended this theory further,
considering systems of PDEs having the form
\begin{equation}
  \label{eq:bridges}
  \begin{bmatrix}
    & \delta ^1 \\
    \mathrm{d} ^0 & & \ddots  & \\
    & \ddots   & & \delta ^n  \\
    & & \mathrm{d} ^{ n -1 }  
  \end{bmatrix}
  \begin{bmatrix}
    z ^0 \\
    z ^1 \\
    \vdots\\
    z ^n 
  \end{bmatrix} =
  \begin{bmatrix}
    \partial H / \partial z ^0 \\
    \partial H / \partial z ^1 \\
    \vdots \\
    \partial H / \partial z ^n
  \end{bmatrix},
\end{equation}
for $ H = H ( x, z ) $, where each $ z ^k $ is a differential
$k$-form, $ \mathrm{d} ^k $ and $ \delta ^k $ are the exterior
differential and codifferential on $k$-forms, and the domain is an
$n$-dimensional pseudo-Riemannian manifold. For example, on a domain
in $\mathbb{R}^3$ equipped with the Euclidean metric, we can identify
$0$- and $3$-forms with scalar fields and $1$- and $2$-forms with
vector fields, allowing \eqref{eq:bridges} to be written as
\begin{equation*}
  \begin{bmatrix}
     & - \operatorname{div} & \\
    \operatorname{grad} & & \operatorname{curl} \\
    & \operatorname{curl} & & - \operatorname{grad} \\
    & & \operatorname{div}
  \end{bmatrix}
  \begin{bmatrix}
    z ^0 \\
    z ^1 \\
    z ^2 \\
    z ^3 
  \end{bmatrix}
  =
  \begin{bmatrix}
    \partial H / \partial z ^0 \\
    \partial H / \partial z ^1 \\
    \partial H / \partial z ^2 \\
    \partial H / \partial z ^3 
  \end{bmatrix}.
\end{equation*}
The inclusion of the curl operator makes this a natural framework for
expressing the Hamiltonian structure of, for example, Maxwell's
equations.

Each of the systems \eqref{eq:hamiltonian_ODE}, \eqref{eq:ddw},
\eqref{eq:bridges} has a physically-important local conservation law
that one may wish for a numerical method to preserve exactly. For
Hamiltonian ODEs in the form \eqref{eq:hamiltonian_ODE}, the
\emph{symplectic conservation law} is preserved by \emph{symplectic
  integrators}, whose numerical advantages for such problems have been
well studied \citep{SaCa1994,LeRe2004,HaLuWa2006}. For Hamiltonian
PDEs in the canonical de~Donder--Weyl form \eqref{eq:ddw} or Bridges
form \eqref{eq:bridges}, there is a \emph{multisymplectic conservation
  law} whose preservation by numerical methods has been the subject of
considerable investigation, inspired by the success of symplectic
integrators. Much of the initial progress on multisymplectic methods
focused on tensor products of symplectic integrators on rectangular
grids or, alternatively, on relatively low-order finite difference and
finite volume methods on unstructured meshes. In particular, we
mention the seminal work of Marsden and collaborators
\citep{MaPaSh1998,MaSh1999,MaPeShWe2001,LeMaOrWe2003} and Reich and
collaborators
\citep{Reich2000a,Reich2000b,BrRe2001,MoRe2003,FrMoRe2006} in the late
1990s and 2000s. There had been some work around this time on finite
element methods, but primarily as an approach for constructing
multisymplectic finite difference stencils for special classes of
problems and meshes \citep{GuJiLiWu2001,ZhBaLiWu2003,Chen2008}.

In 2020, \citet{McSt2020} developed a general theory of
multisymplectic hybrid finite element methods for canonical
Hamiltonian PDEs in the de~Donder--Weyl form \eqref{eq:ddw}, based on
the unified hybridization framework of \citet*{CoGoLa2009}. The
results of \citep{McSt2020} show that the hybridized versions of
several conforming, mixed, nonconforming, and hybridizable
discontinuous Galerkin (HDG) methods are multisymplectic---that is,
their numerical traces/fluxes satisfy a multisymplectic conservation
law---giving several classes of arbitrarily high-order multisymplectic
methods on unstructured meshes. Interestingly, the classic
``continuous Galerkin'' Lagrange finite element method is shown to
satisfy a weaker version of the multisymplectic conservation law than
the other methods considered, due to the fact that its numerical flux
is only weakly rather than strongly conservative. The use of these
methods was more recently extended to semidiscretization of
time-dependent de~Donder--Weyl systems in \citep{McSt2024}.

In this paper, we examine multisymplecticity of hybrid finite element
methods for the more general class of systems \eqref{eq:bridges},
using techniques from \emph{finite element exterior calculus} (FEEC)
\citep{ArFaWi2006,ArFaWi2010,Arnold2018}. The approach is based on the
recent hybridization framework for FEEC developed by
\citet*{AwFaGuSt2023}. We show that several hybridized FEEC
methods---including the conforming methods of
\citet*{ArFaWi2006,ArFaWi2010}, as well as various HDG-type
methods---have numerical traces satisfying a multisymplectic
conservation law. The main results of \citep{McSt2020} are recovered
as a special case of those presented here---and, as in
\citep{McSt2020}, the conforming methods are multisymplectic in a
weaker sense than the HDG-type and other nonconforming methods.

In \citep{McSt2020}, the multisymplectic conservation law for
de~Donder--Weyl systems was linked to physically-important
``reciprocity'' phenomena, such as Green's reciprocity in
electrostatics, that multisymplectic methods preserve numerically. In
the more general setting considered here, we link multisymplecticity
to other reciprocity phenomena, including \emph{Lorentz reciprocity}
in electromagnetics. As \citet*{RyInBo2013} observe in their text
\emph{Computational Electromagnetics}:
\begin{quotation}
  \emph{The symmetric, or reciprocal, property of the FEM appears not to
  hold for finite volume discretizations. In fact, lack of symmetry is
  a likely explanation of the late-time instability observed for many
  schemes.}
\end{quotation}
Numerical reciprocity (i.e., multisymplecticity) is therefore not
merely an interesting theoretical property; it is also a feature of
finite element methods believed to be important to practitioners.

\subsection{Organization of the paper}

The paper is organized as follows:
\begin{itemize}
\item \Cref{sec:multisymplectic} introduces a class of canonical
  systems of PDEs involving differential forms. Among these, we show
  that canonical Hamiltonian systems \eqref{eq:bridges} satisfy a
  local multisymplectic conservation law, which we connect with
  reciprocity phenomena. The theory is illustrated with several
  examples involving Hodge--Dirac and Hodge--Laplace operators, Stokes
  flow in vorticity-velocity-pressure form, and Maxwell's equations.

\item \Cref{sec:multisymplectic_hybrid} develops a unified
  hybridization framework for FEEC methods approximating the canonical
  systems of the previous section. Within this framework, we develop
  criteria characterizing multisymplectic methods, i.e., those whose
  numerical traces satisfy a multisymplectic conservation law when
  applied to Hamiltonian systems. Both ``weak'' and ``strong'' notions
  of multisymplecticity are considered, and these are linked to
  weak/strong conservativity of numerical traces.

\item \Cref{sec:particular_methods} applies the general results of
  \cref{sec:multisymplectic_hybrid} to study the multisymplecticity of
  several particular classes of methods, including the conforming
  methods of \citet*{ArFaWi2006,ArFaWi2010} (using the hybridization
  of \citep{AwFaGuSt2023}), various HDG methods (including LDG-H and
  IP-H-type methods and those making use of reduced stabilization
  \citep{Lehrenfeld2010,LeSc2016,Oikawa2015,Oikawa2016}), and
  primal-hybrid nonconforming methods. Several previously-proposed HDG
  methods for vector problems in $ \mathbb{R}^2 $ and $ \mathbb{R}^3 $
  are shown to be among the multisymplectic methods.

\end{itemize}
  
\subsection{Acknowledgments} Ari Stern was supported by the National
Science Foundation (Grant No.\ DMS-2208551). Enrico Zampa acknowledges PhD grant funding from the University of Trento (UNITN).

\section{Multisymplectic PDEs and the Hodge--Dirac operator}

\label{sec:multisymplectic}

\subsection{Canonical systems of PDEs: smooth theory}

We begin by describing a class of canonical systems of PDEs that
includes the Hamiltonian systems \eqref{eq:bridges} considered by
\citet{Bridges2006}. For simplicity of exposition, and because it
contains the cases of numerical interest to us, we restrict attention
to domains $ \Omega \subset \mathbb{R}^n $ with the Euclidean metric.
(See \cref{rmk:manifold} for a brief discussion of the case where
$\Omega$ is a manifold.)

We first quickly recall some preliminaries from exterior algebra and
exterior calculus on $\Omega$. For an in-depth treatment of this
subject motivated by numerical PDEs on
$ \Omega \subset \mathbb{R}^n $, see
\citet*{ArFaWi2006,ArFaWi2010,Arnold2018}; for a more general approach
on manifolds, see \citet*{AbMaRa1988,Lee2013}.

\begin{definition}
  Let $ \operatorname{Alt} ^k \mathbb{R}^n $ denote the space of
  alternating (i.e., totally antisymmetric) $k$-linear forms
  $ \mathbb{R}^n \times \cdots \times \mathbb{R}^n \rightarrow
  \mathbb{R} $, and let
  $ \operatorname{Alt} \mathbb{R}^n \coloneqq \bigoplus _{ k = 0 } ^n
  \operatorname{Alt} ^k \mathbb{R}^n $.
\end{definition}

We recall some standard algebraic operations on
$ \operatorname{Alt} \mathbb{R}^n $. The \emph{wedge} (or
\emph{exterior}) \emph{product}
\begin{equation*}
  \wedge \colon \operatorname{Alt} ^k \mathbb{R}^n \times \operatorname{Alt} ^\ell \mathbb{R}^n \rightarrow \operatorname{Alt} ^{ k + \ell } \mathbb{R}^n
\end{equation*}
gives $ ( \operatorname{Alt} \mathbb{R}^n , \wedge ) $ the structure
of an associative algebra, called the \emph{exterior algebra} on
$\mathbb{R}^n$. Next, the Euclidean inner product on $\mathbb{R}^n$
naturally induces an inner product $ ( \cdot , \cdot ) $ on
$ \operatorname{Alt} \mathbb{R}^n $. This defines the \emph{Hodge
  star} operator
$\star \colon \operatorname{Alt} ^k \mathbb{R}^n \rightarrow
\operatorname{Alt} ^{ n - k } \mathbb{R}^n $ by the condition
\begin{equation*}
  v \wedge \star w = ( v , w ) \operatorname{vol}, \qquad v, w \in \operatorname{Alt} ^k \mathbb{R}^n ,
\end{equation*}
where $ \operatorname{vol} \in \operatorname{Alt} ^n \mathbb{R}^n $ is
the Euclidean volume form (i.e., the determinant). This is an
isometric isomorphism for each $k$ and thus extends to an isometric
automorphism
$ \star \colon \operatorname{Alt} \mathbb{R}^n \rightarrow
\operatorname{Alt} \mathbb{R}^n $.

\begin{definition}
  Let $\Lambda ^k (\Omega) $ be the space of \emph{smooth differential
    $k$-forms} on $\Omega \subset \mathbb{R}^n $, i.e., smooth maps
  $ \Omega \rightarrow \operatorname{Alt} ^k \mathbb{R} ^n $. Denote
  $ \Lambda (\Omega) \coloneqq \bigoplus _{ k = 0 } ^n \Lambda ^k
  (\Omega) $, whose elements are smooth maps
  $ \Omega \rightarrow \operatorname{Alt} \mathbb{R}^n $.
\end{definition}

The wedge product and Hodge star extend to $ \Lambda (\Omega) $ by
applying them pointwise at each $ x \in \Omega $. We also recall the
\emph{exterior differential}
$ \mathrm{d} ^k \colon \Lambda ^k (\Omega) \rightarrow \Lambda ^{ k +
  1 } (\Omega) $ and \emph{codifferential}
$ \delta ^k \colon \Lambda ^k (\Omega) \rightarrow \Lambda ^{ k -1 }
(\Omega) $. The codifferential is defined by
$ \delta ^k \coloneqq ( - 1 ) ^k \star ^{-1} \mathrm{d} ^{ n - k }
\star $, which implies the useful identity
\begin{equation}
  \label{eq:d_delta_identity}
  \mathrm{d} ( \tau \wedge \star v ) = \mathrm{d} \tau \wedge \star v - \tau \wedge \star \delta v , \qquad \tau \in \Lambda ^{ k -1 } (\Omega) ,\ v \in \Lambda ^k (\Omega) .
\end{equation}
We take
\begin{equation*}
  \mathrm{d} \coloneqq \bigoplus _{ k = 0 } ^n \mathrm{d} ^k , \qquad \delta \coloneqq \bigoplus _{ k = 0 } ^n \delta ^k  ,
\end{equation*}
to be operators on $\Lambda (\Omega) $, where $ \mathrm{d} $ is
$ ( + 1 ) $-graded and $ \delta $ is $ ( -1 ) $-graded.

\begin{definition}
  The \emph{Hodge--Dirac} (sometimes \emph{Hodge--de~Rham}) operator
  on $ \Lambda (\Omega) $ is
  $ \mathrm{D} \coloneqq \mathrm{d} + \delta $.
\end{definition}

\begin{remark}
  \citet{Bridges2006} calls $ \mathrm{D} $ the ``multi-symplectic
  Dirac operator'' and denotes it by
  $ \boldsymbol{ J _{ \partial } } $.  Unlike $ \mathrm{d} $ and
  $ \delta $, the operator $ \mathrm{D} $ does not have an integer
  grading. However, it is sometimes treated as
  $ \mathbb{Z} _2 $-graded, corresponding to even and odd
  $k$. Furthermore, since $ \mathrm{d} \mathrm{d} = 0 $ and
  $ \delta \delta = 0 $, we have
  $ \mathrm{D} \mathrm{D} = \mathrm{d} \delta + \delta \mathrm{d} $,
  which is the ($0$-graded) \emph{Hodge--Laplace operator}.
\end{remark}

We will consider systems of PDEs in the canonical form
\begin{equation}
  \label{eq:canonical}
  \mathrm{D} z (x) = f \bigl( x , z (x) \bigr),  \quad x \in  \Omega ,
\end{equation}
where $ z \in \Lambda (\Omega) $ and
$ f \colon \Omega \times \operatorname{Alt} \mathbb{R}^n \rightarrow
\operatorname{Alt} \mathbb{R}^n $ is a given (possibly nonlinear)
function. In particular, we say that \eqref{eq:canonical} is a
\emph{canonical Hamiltonian system} if $ f = \partial H / \partial z $
for some
$ H \colon \Omega \times \operatorname{Alt} \mathbb{R}^n \rightarrow
\mathbb{R} $. Observe that this is precisely the form of
\eqref{eq:bridges}, where $ \mathrm{D} $ corresponds to the
left-hand-side matrix.

\begin{remark}
  \label{rmk:manifold}
  More generally, if $\Omega$ is an $n$-dimensional oriented
  pseudo-Riemannian manifold, the space $ \Lambda (\Omega) $ consists
  of smooth sections of the \emph{total exterior algebra bundle}
  $ \operatorname{Alt} T \Omega $, whose fiber at $ x \in \Omega $ is
  $ \operatorname{Alt} T _x \Omega $.  (In \citet{Bridges2006}, the
  total exterior algebra bundle is written using the equivalent
  notation $ \bigwedge T ^\ast \Omega $, whose fiber at
  $ x \in \Omega $ is $ \bigwedge T ^\ast _x \Omega $.)  The canonical
  system \eqref{eq:canonical} can be defined for any (possibly
  nonlinear) bundle map
  $ f \colon \operatorname{Alt} T \Omega \rightarrow
  \operatorname{Alt} T \Omega $, where
  $ f ( x, \cdot ) \colon \operatorname{Alt} T _x \Omega \rightarrow
  \operatorname{Alt} T _x \Omega $. In particular, a canonical
  Hamiltonian system can be defined for any
  $ H \colon \operatorname{Alt} T \Omega \rightarrow \mathbb{R} $.
\end{remark}

\begin{example}
  \label{ex:1D}
  In the case $ n = 1 $, we can write
  $ \mathrm{D} = J \frac{\mathrm{d}}{\mathrm{d}t} $, where
  $ J = \bigl[ \begin{smallmatrix}
    & - 1 \\
    1 &
  \end{smallmatrix} \bigr] $ is the canonical symplectic matrix; this
  $ \mathrm{D} $ is precisely the matrix on the left-hand side of
  \eqref{eq:hamiltonian_ODE}. Thus, \eqref{eq:canonical} becomes
  \begin{equation*}
    J \dot{ z } (t) = f \bigl(  t, z (t) \bigr) .
  \end{equation*}
  When $ f = \partial H / \partial z $, this is the canonical
  Hamiltonian system of ODEs \eqref{eq:hamiltonian_ODE} with
  $ z = q \oplus p \,\mathrm{d}t $.
\end{example}
  
\begin{example}
  If $ f = f (x) $, then \eqref{eq:canonical} corresponds to the
  linear Hodge--Dirac problem
  \begin{equation*}
    \mathrm{D} z (x) = f (x) ,
  \end{equation*}
  which is a canonical Hamiltonian system with
  $ H ( x, z ) = \bigl( f (x) , z \bigr) $.
\end{example}

\begin{example}[semilinear Hodge--Laplace problem]
  \label{ex:laplace}
  Given a potential function
  $ F \colon \Omega \times \operatorname{Alt} ^k \mathbb{R}^n
  \rightarrow \mathbb{R} $, suppose that $ u \in \Lambda ^k (\Omega) $
  is a solution to the semilinear Hodge--Laplace problem
  \begin{equation*}
    \mathrm{d} \delta u + \delta \mathrm{d} u = \frac{ \partial F }{ \partial u }  .
  \end{equation*}
  The linear Hodge--Laplace problem is the special case
  $ F ( x, u ) = \bigl( f ^k (x) , u \bigr) $ for some
  $ f ^k \in \Lambda ^k (\Omega) $.  Introducing variables
  $ \sigma = \delta u \in \Lambda ^{ k -1 } (\Omega) $ and
  $ \rho = \mathrm{d} u \in \Lambda ^{ k + 1 } (\Omega) $ lets us
  write this in first-order form,
  \begin{align*}
    \delta u &= \sigma ,\\
    \mathrm{d} \sigma + \delta \rho &= \frac{ \partial F }{ \partial u } ,\\
    \mathrm{d} u &= \rho .
  \end{align*}
  Thus, $ z = \sigma \oplus u \oplus \rho $ is a solution to the
  canonical Hamiltonian system with
  \begin{equation*}
    H ( x, z ) = \frac{1}{2} \lvert \sigma \rvert ^2 + F ( x, u ) + \frac{1}{2} \lvert \rho \rvert ^2 .
  \end{equation*}
\end{example}

\begin{example}[vorticity-velocity-pressure problem]
  \label{ex:vvp}
  As a slightly modified version of the previous example, suppose
  instead that $ z = \sigma \oplus u \oplus \rho $ satisfies
  \begin{align*}
    \delta u &= \sigma ,\\
    \mathrm{d} \sigma + \delta \rho &= \frac{ \partial F }{ \partial u } ,\\
    \mathrm{d} u &= 0 ,
  \end{align*}
  which differs only in the right-hand side of the third
  equation. This corresponds to the Hamiltonian
  \begin{equation*}
    H ( x, z ) = \frac{1}{2} \lvert \sigma \rvert ^2 + F ( x, u ) .
  \end{equation*}
  When $ n = 3 $ and $ k = 2 $, the linear case coincides with the
  \emph{vorticity-velocity-pressure} formulation of the Stokes
  problem considered by \citet{Nedelec1982},
  \begin{align*}
    \operatorname{curl} \mathbf{u} &= \mathbf{w} ,\\
    \operatorname{curl} \mathbf{w} + \operatorname{grad} p &= \mathbf{f} ,\\
    \operatorname{div} \mathbf{u} &= 0 .
  \end{align*}
  This same formulation has also been applied (e.g., in
  \citep{ChQiShSo2017,ChQiSh2018}) to the Maxwell-type problem
  \begin{align*}
    \operatorname{curl} \operatorname{curl} \mathbf{u} &= \mathbf{f} ,\\
    \operatorname{div} \mathbf{u} &= 0 ,
  \end{align*}
  where the ``pseudo-pressure'' $p$ acts as a Lagrange multiplier
  enforcing the divergence-free constraint.
\end{example}

\subsection{The multisymplectic conservation law}
\label{sec:mscl_differential}

The multisymplectic conservation law is expressed in terms of
variations of solutions to \eqref{eq:canonical}, which we now define.

\begin{definition}
  \label{def:variation}
  Let $ z \in \Lambda (\Omega) $ be a solution to
  \eqref{eq:canonical}. A \emph{(first) variation} of $z$ is a
  solution $ w \in \Lambda (\Omega) $ to the linearized equation
  \begin{equation}
    \label{eq:variational}
    \mathrm{D} w (x) = \frac{ \partial f }{ \partial z } \bigl( x, z (x) \bigr) w (x) , \quad x \in \Omega ,
  \end{equation}
  called the \emph{variational equation} of \eqref{eq:canonical} at
  $z$.
\end{definition}

\begin{definition}
  The canonical system \eqref{eq:canonical} is said to be
  \emph{multisymplectic} if, for all solutions
  $ z \in \Lambda (\Omega) $ of \eqref{eq:canonical} and first
  variations $ w _1 , w _2 \in \Lambda (\Omega) $, we have
  \begin{equation}
    \label{eq:mscl_differential}
    ( \mathrm{D} w _1 , w _2 ) - ( w _1 , \mathrm{D} w _2 ) = 0 ,
  \end{equation}
  holding at all $ x \in \Omega $. We call
  \eqref{eq:mscl_differential} the \emph{multisymplectic conservation
    law}.
\end{definition}

\begin{proposition}
  The system \eqref{eq:canonical} is multisymplectic if
  $ \partial f / \partial z $ is symmetric at all $ x \in \Omega $,
  $ z \in \operatorname{Alt} \mathbb{R}^n $, which holds if and only
  if the system is Hamiltonian.
\end{proposition}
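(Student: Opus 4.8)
The plan is to split the statement into its two assertions and dispatch each directly. First I would show that symmetry of $\partial f / \partial z$ forces \eqref{eq:mscl_differential}. Let $z \in \Lambda(\Omega)$ solve \eqref{eq:canonical} and let $w_1, w_2$ be first variations, so that \eqref{eq:variational} gives $\mathrm{D} w_i(x) = \frac{\partial f}{\partial z}\bigl(x, z(x)\bigr) w_i(x)$ for $i = 1, 2$. Substituting this into the left-hand side of \eqref{eq:mscl_differential} yields, pointwise at each $x \in \Omega$,
\[
(\mathrm{D} w_1, w_2) - (w_1, \mathrm{D} w_2) = \left( \frac{\partial f}{\partial z} w_1 , w_2 \right) - \left( w_1 , \frac{\partial f}{\partial z} w_2 \right),
\]
with $\partial f / \partial z$ evaluated at $\bigl(x, z(x)\bigr)$. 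If $\partial f / \partial z$ is a symmetric operator on $\operatorname{Alt} \mathbb{R}^n$ with respect to the fiberwise inner product $(\cdot, \cdot)$, the two terms on the right cancel, and the conservation law holds.

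Next I would prove the equivalence between symmetry of $\partial f / \partial z$ and the Hamiltonian property. If $f = \partial H / \partial z$, then for each fixed $x$ the operator $\frac{\partial f}{\partial z}(x, \cdot) = \frac{\partial^2 H}{\partial z^2}(x, \cdot)$ is the Hessian of $H(x, \cdot)$, hence self-adjoint with respect to $(\cdot, \cdot)$ by equality of mixed second partials. Conversely, suppose $\partial f / \partial z$ is symmetric everywhere. Viewing $z \mapsto f(x, z)$, via the inner product, as a vector field on the vector space $\operatorname{Alt} \mathbb{R}^n$, symmetry of its derivative is exactly the closedness condition; since $\operatorname{Alt} \mathbb{R}^n$ is contractible, the Poincaré lemma produces a potential, and one may take the explicit primitive
\[
H(x, z) \coloneqq \int_0^1 \bigl( f(x, s z), z \bigr) \, \mathrm{d}s .
\]
Differentiating under the integral sign and using symmetry of $\partial f / \partial z$ recovers $\partial H / \partial z = f$, with $H$ inheriting the regularity of $f$ in $x$.

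There is no substantive obstacle in this argument; the only thing needing care is the bookkeeping about which pairing is meant by ``symmetric'' and by ``$\partial H / \partial z$'', namely the induced inner product $(\cdot, \cdot)$ on $\operatorname{Alt} \mathbb{R}^n$, so that the Hessian/gradient identifications and the cancellation in the first part are mutually consistent. The converse direction of the equivalence also tacitly uses that $f(x, \cdot)$ is $C^1$ so that the integral formula for $H$ is itself differentiable in $z$; this is automatic under the standing smoothness assumptions.
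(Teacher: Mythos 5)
Your proposal is correct and follows essentially the same route as the paper: substitute the variational equation into \eqref{eq:mscl_differential} and cancel by symmetry, identify $\partial f/\partial z$ with the Hessian of $H$ in the Hamiltonian case, and invoke the Poincar\'e lemma for the converse. The only difference is that you make the Poincar\'e lemma step explicit with the primitive $H(x,z)=\int_0^1 (f(x,sz),z)\,\mathrm{d}s$, which the paper leaves implicit.
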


\begin{proof}
  For all first variations $ w _1 $ and $ w _2 $ of a solution $z$, we have
  \begin{equation*}
    ( \mathrm{D} w _1 , w _2 ) - ( w _1 , \mathrm{D} w _2 ) = \biggl( \frac{ \partial f }{ \partial z } w _1 , w _2 \biggr) - \biggl(  w _1 , \frac{ \partial f }{ \partial z } w _2 \biggr) .
  \end{equation*}
  If $ \partial f / \partial z $ is symmetric, then the right-hand
  side vanishes, and thus \eqref{eq:mscl_differential} holds. In
  particular, if $ f = \partial H / \partial z $, then
  $ \partial f / \partial z = \partial ^2 H / \partial z ^2 $ is a
  Hessian and thus symmetric. Conversely, symmetry of
  $ \partial f / \partial z $ implies that
  $ f = \partial H / \partial z $ for some $H$ by the Poincar\'e
  lemma.
\end{proof}

We now explain how \eqref{eq:mscl_differential} is a conservation law,
which may not be immediately obvious. A direct calculation using
\eqref{eq:d_delta_identity} shows that, for all
$ w _1 , w _2 \in \Lambda (\Omega) $, not necessarily first variations, we
have
\begin{align}
  \bigl( ( \mathrm{D} w _1 , w _2 ) - ( w _1 , \mathrm{D} w _2 ) \bigr) \mathrm{vol}
  &= 
    \sum _{ k = 1 } ^n \bigl( ( \mathrm{d} w _1 ^{ k -1 } \wedge \star w _2 ^k - w _1 ^{ k -1 } \wedge \star \delta w _2 ^k ) - ( \mathrm{d} w _2 ^{ k -1 } \wedge \star w _1 ^k - w _2 ^{ k -1 } \wedge \star \delta w _1 ^k ) \bigr) \notag\\
  &= \mathrm{d} \sum _{ k = 1 } ^n ( w _1 ^{ k -1 } \wedge \star w _2 ^k - w _2 ^{ k -1 } \wedge \star w _1 ^k ) \label{eq:dirac_domega},
\end{align}
cf.~\citet[Proposition 2.5]{Bridges2006}.  If we define
$ \omega \colon \operatorname{Alt} \mathbb{R}^n \times
\operatorname{Alt} \mathbb{R}^n \rightarrow \operatorname{Alt} ^{ n -1
} \mathbb{R}^n $ by
\begin{equation}
  \label{eq:multisymplectic_form}
  \omega ( w _1 , w _2 ) \coloneqq \sum _{ k = 1 } ^n ( w _1 ^{ k -1 } \wedge \star w _2 ^k - w _2 ^{ k -1 } \wedge \star w _1 ^k ) ,
\end{equation}
then given $ w _1 , w _2 \in \Lambda (\Omega) $, we have
$ \omega ( w _1 , w _2 ) \in \Lambda ^{ n -1 } (\Omega) $. The
multisymplectic conservation law \eqref{eq:mscl_differential} is the
statement that
$ \mathrm{d} \omega ( w _1 , w _2 ) \in \Lambda ^n (\Omega) $ vanishes
whenever $w _1$ and $w _2$ are first variations. Equivalently, since
$ \operatorname{Alt} ^{ n -1 } \mathbb{R}^n \cong \mathbb{R}^n $, we
can interpret $ \omega ( w _1 , w _2 ) $ as a vector field and
\eqref{eq:mscl_differential} as the statement that
\begin{equation*}
  \operatorname{div} \omega ( w _1 , w _2 ) = 0 ,
\end{equation*}
whenever $w _1$ and $w _2$ are first variations.

This also illuminates the reasoning behind the term
\emph{multisymplectic}. Since \eqref{eq:multisymplectic_form} is
antisymmetric in $w_1$ and $w_2$, we can view $\omega$ as a $2$-form on
$ \operatorname{Alt} \mathbb{R}^n $ taking values in
$ \operatorname{Alt} ^{ n -1 } \mathbb{R}^n \cong \mathbb{R}^n $. That
is, we have
$ \omega \in \operatorname{Alt} ^2 ( \operatorname{Alt} \mathbb{R}^n )
\otimes \operatorname{Alt} ^{ n -1 } \mathbb{R}^n \cong
\operatorname{Alt} ^2 ( \operatorname{Alt} \mathbb{R}^n ) \otimes
\mathbb{R}^n $. This can be seen as a collection of $n$ antisymmetric
$2$-forms (``multiple symplectic forms''), one for each coordinate
direction. We refer to $\omega$ as the \emph{canonical multisymplectic
  $2$-form} on $ \operatorname{Alt} \mathbb{R}^n $.

\begin{remark}
  We always interpret $ \mathrm{d} \omega $ as an element of
  $ \operatorname{Alt} ^2 \Lambda (\Omega) \otimes \Lambda ^n (\Omega)
  $, not
  $ \operatorname{Alt} ^3 \Lambda (\Omega) \otimes \Lambda ^{ n -1 }
  (\Omega) $. In the language of \citet*{BrHyLa2010}, $ \mathrm{d} $
  is the \emph{horizontal} exterior derivative, not the vertical
  exterior derivative.
\end{remark}

\begin{example}
  When $ n = 1 $, as in \cref{ex:1D}, the multisymplectic conservation
  law \eqref{eq:mscl_differential} becomes
  \begin{equation*}
    ( J \dot{ w } _1 , w _2 ) - ( w _1 , J \dot{ w } _2 ) = 0 .
  \end{equation*}
  This is just the usual symplectic conservation law
  $ \frac{\mathrm{d}}{\mathrm{d}t} \omega ( w_1, w_2 ) = 0 $, where
  $ \omega ( w_1, w_2 ) \coloneqq ( J w_1, w_2 ) $ is the canonical symplectic
  $2$-form.
\end{example}

\begin{example}
  \label{ex:laplace_mscl_diff}
  Let $ z = \sigma \oplus u \oplus \rho $ be a solution to the
  semilinear $k$-form Hodge--Laplace problem, as in
  \cref{ex:laplace}. A first variation
  $ w = \tau \oplus v \oplus \eta $ satisfies the linearized problem
  \begin{align*}
    \delta v &= \tau ,\\
    \mathrm{d} \tau + \delta \eta &= \frac{ \partial ^2 F }{ \partial u ^2 } v ,\\
    \mathrm{d} v &= \eta .
  \end{align*}
  Hence, if $ w _1 = \tau _1 \oplus v _1 \oplus \eta _1 $ and
  $ w _2 = \tau _2 \oplus v _2 \oplus \eta _2 $ are a pair of first
  variations, we have
  \begin{align*}
    ( \mathrm{D} w _1 , w _2 )
    &= ( \delta v _1 , \tau _2 ) + ( \mathrm{d} \tau _1 + \delta \eta _1 , v _2 ) + ( \mathrm{d} v _1 , \eta _2 ) \\
    &= ( \tau _1 , \tau _2 ) + \biggl( \frac{ \partial ^2 F }{ \partial u ^2 } v _1, v _2 \biggr) + ( \eta _1, \eta _2 ) \\
    &= ( \tau _1 , \tau _2 ) + \biggl( v _1, \frac{ \partial ^2 F }{ \partial u ^2 } v _2 \biggr) + ( \eta _1, \eta _2 ) \\
    &= ( \tau _1  , \delta v _2 ) + ( v _1, \mathrm{d} \tau _2 + \delta \eta _2 ) + ( \eta _1, \mathrm{d} v _2 ) \\
    &= ( w _1 , \mathrm{D} w _2 ) ,
  \end{align*}
  which confirms the multisymplectic conservation law
  \eqref{eq:mscl_differential}. The multisymplectic $2$-form
  \eqref{eq:multisymplectic_form} is
  \begin{equation*}
    \omega ( w _1, w _2 ) = ( \tau _1 \wedge \star v _2 - \tau _2 \wedge \star v _1 ) + ( v _1 \wedge \star \eta _2 - v _2 \wedge \star \eta _1 ) .
  \end{equation*}
\end{example}

\begin{example}
  \label{ex:mscl_vvp}
  If $ z = \sigma \oplus u \oplus \rho $ solves the $k$-form
  vorticity-velocity-pressure problem of \cref{ex:vvp}, then the
  multisymplectic conservation law has precisely the same form as in
  the previous example. The calculation is nearly identical, except
  that
  $ ( \mathrm{d} v _1 , \eta _2 ) = ( \eta _1 , \mathrm{d} v _2 ) $
  holds since both expressions equal $0$ rather than both equaling
  $ ( \eta _1 , \eta _2 ) $.

  In terms of the vector and scalar proxy fields for the Stokes
  problem with $ n = 3 $ and $ k = 2 $, a first variation is
  identified with a solution
  $ w _i = \boldsymbol{ \tau } _i \oplus \mathbf{v} _i \oplus \eta _i
  $ to
  \begin{align*}
    \operatorname{curl} \mathbf{v} _i &= \boldsymbol{\tau} _i ,\\
    \operatorname{curl} \boldsymbol{\tau} _i - \operatorname{grad} \eta _i &= 0 ,\\
    \operatorname{div} \mathbf{v} _i &= 0 .
  \end{align*}
  (Note that $ p = - \rho $, hence the sign change in the second
  equation.) The multisymplectic conservation law is the statement
  that
  $ \operatorname{div} \boldsymbol{ \omega } ( w _1 , w _2 ) = 0 $,
  where $ \boldsymbol{ \omega } $ is the vector-valued $2$-form
  \begin{equation*}
    \boldsymbol{ \omega } ( w _1 , w _2 ) = ( \boldsymbol{ \tau } _1 \times \mathbf{v} _2 - \boldsymbol{ \tau } _2 \times \mathbf{v} _1 ) + ( \mathbf{v} _1 \eta _2 - \mathbf{v} _2 \eta _1 ) .
  \end{equation*}
  
\end{example}

\subsection{Integral form of the multisymplectic conservation law}
If $ K \Subset \Omega $ is a subdomain with smooth boundary
$ \partial K $, Stokes's theorem implies that, for all
$ w_1 , w_2 \in \Lambda (\Omega) $, we have
\begin{equation*}
  \int _K \mathrm{d} \omega ( w_1, w_2 ) = \int _{ \partial K } \operatorname{tr} \omega ( w_1, w_2 ) ,
\end{equation*}
where $ \operatorname{tr} $ denotes pullback by the inclusion
$ \partial K \hookrightarrow \Omega $. Thus, the multisymplectic
conservation law \eqref{eq:mscl_differential} is equivalent to the
statement that
\begin{equation}
  \label{eq:mscl_integral}
  \int _{ \partial K } \operatorname{tr} \omega ( w_1, w_2 ) = 0 ,
\end{equation}
for all such $K$, whenever $w_1$ and $w_2$ are first variations of a
solution to \eqref{eq:canonical}. We call \eqref{eq:mscl_integral} the
\emph{integral form of the multisymplectic conservation law}.

We next show how \eqref{eq:mscl_integral} may be expressed in terms of
the \emph{tangential} and \emph{normal traces} of $w _1$ and $w _2$ on
$ \partial K $. In addition to illuminating what multisymplecticity
says about boundary conditions, this formulation will later generalize
to the weak form of the problem \eqref{eq:canonical} with merely
Lipschitz boundaries. For the moment, though, we continue to assume
that everything is smooth.

\begin{definition}
  \label{def:smooth_traces}
  Let $ \widehat{ \star } $ be the Hodge star on $ \partial K $, with
  respect to the orientation induced by $K$ and the metric induced by
  $\mathbb{R}^n$. The \emph{tangential} and \emph{normal traces} of
  $ w \in \Lambda (\Omega) $ are
  \begin{equation*}
    w ^{\mathrm{tan}} \coloneqq \operatorname{tr} w \in \Lambda (\partial K) , \qquad w ^{\mathrm{nor}} \coloneqq \widehat{ \star } ^{-1} \operatorname{tr} \star w \in \Lambda ( \partial K ) .
  \end{equation*}
  Note that
  $ (w ^k )^{\mathrm{tan}} \in \Lambda ^k ( \partial K ) $, while
  $ (w ^k )^{\mathrm{nor}} \in \Lambda ^{ k -1 } ( \partial K ) $.
\end{definition}

Now, let $ ( \cdot , \cdot ) _K $ denote the $ L ^2 $ inner product on
$ \Lambda (K) $, i.e.,
$ ( w_1, w_2 ) _K = \sum _{ k = 0 } ^n \int _K w_1 ^k \wedge \star w_2 ^k
$. Similarly, let $ \langle \cdot , \cdot \rangle _{ \partial K } $
denote the $ L ^2 $ inner product on $ \Lambda ( \partial K ) $, using
the boundary Hodge star $ \widehat{ \star } $. From Stokes's theorem
and the identity \eqref{eq:d_delta_identity}, we obtain the
integration-by-parts formula
\begin{equation}
  \label{eq:ibp}
  \langle w_1 ^{\mathrm{tan}} , w_2 ^{\mathrm{nor}} \rangle _{ \partial K } = ( \mathrm{d} w_1 , w_2 ) _K - ( w_1 , \delta w_2 ) _K ,
\end{equation}
which holds for all $ w_1 , w_2 \in \Lambda (\Omega) $; see
\citep[Proposition 2.2]{AwFaGuSt2023}.

\begin{definition}
  Define the antisymmetric bilinear form
  \begin{equation*}
    [ w_1, w_2 ] _{ \partial K } \coloneqq \langle w_1 ^{\mathrm{tan}} , w_2 ^{\mathrm{nor}} \rangle _{ \partial K } - \langle w_2 ^{\mathrm{tan}} , w_1 ^{\mathrm{nor}} \rangle _{ \partial K } .
  \end{equation*}
\end{definition}

\begin{proposition}
  \label{prop:mscl_bracket}
  For all $ w_1, w_2 \in \Lambda (\Omega) $, we have
  \begin{align}
    [ w_1 , w_2 ] _{ \partial K }
    &= ( \mathrm{D} w_1 , w_2 ) _K - ( w_1, \mathrm{D} w_2 ) _K \label{eq:ibp_bracket}\\
    &= \int _{ \partial K } \operatorname{tr} \omega ( w_1, w_2 ) \notag.
  \end{align}
  Consequently, the multisymplectic conservation law is equivalent to
  the condition
  \begin{equation}
    \label{eq:mscl_bracket}
    [ w_1 , w_2 ] _{ \partial K } = 0 ,
  \end{equation}
  for all $K \Subset \Omega $ with smooth boundary, when
  $w_1, w_2 \in \Lambda (\Omega) $ are first variations of a solution
  to \eqref{eq:canonical}.
  \label{prop:integral_form}
\end{proposition}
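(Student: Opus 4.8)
The plan is to prove the two displayed equalities in turn, with the quantity $( \mathrm{D} w_1 , w_2 ) _K - ( w_1 , \mathrm{D} w_2 ) _K$ serving as the bridge between them, and then to read off \eqref{eq:mscl_bracket} by comparing with the integral form \eqref{eq:mscl_integral}, which was already shown equivalent to the multisymplectic conservation law \eqref{eq:mscl_differential}.

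For the equality \eqref{eq:ibp_bracket}, I would apply the integration-by-parts formula \eqref{eq:ibp} twice. As stated it reads $\langle w_1 ^{\mathrm{tan}} , w_2 ^{\mathrm{nor}} \rangle _{ \partial K } = ( \mathrm{d} w_1 , w_2 ) _K - ( w_1 , \delta w_2 ) _K$; applying it with the roles of $w_1$ and $w_2$ interchanged and using symmetry of the $L^2$ inner products on $K$ gives $\langle w_2 ^{\mathrm{tan}} , w_1 ^{\mathrm{nor}} \rangle _{ \partial K } = ( w_1 , \mathrm{d} w_2 ) _K - ( \delta w_1 , w_2 ) _K$. Subtracting these two identities, so as to form $[ w_1 , w_2 ] _{ \partial K }$ on the left, and recalling $\mathrm{D} = \mathrm{d} + \delta$, collapses the right-hand side to $( \mathrm{D} w_1 , w_2 ) _K - ( w_1 , \mathrm{D} w_2 ) _K$. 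This step is purely formal bookkeeping; the only care needed is in tracking which degree is paired with which when $w_1$ and $w_2$ are swapped.

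For the second equality, I would integrate the pointwise identity \eqref{eq:dirac_domega} over $K$. By the definition of the $L^2$ inner product on $\Lambda(K)$, the integral over $K$ of the left-hand side of \eqref{eq:dirac_domega} is exactly $( \mathrm{D} w_1 , w_2 ) _K - ( w_1 , \mathrm{D} w_2 ) _K$ (the pairing selects matching degrees, so the identity integrates term by term). The right-hand side of \eqref{eq:dirac_domega} is $\mathrm{d}\,\omega( w_1 , w_2 )$ with $\omega$ as in \eqref{eq:multisymplectic_form}, and Stokes's theorem on $K$ — whose boundary is smooth by hypothesis — turns $\int_K \mathrm{d}\,\omega( w_1 , w_2 )$ into $\int_{\partial K} \operatorname{tr} \omega( w_1 , w_2 )$. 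Combining the two computations yields the second equality. (Alternatively one could bypass \eqref{eq:dirac_domega} and re-derive $\mathrm{d}\,\omega$ from \eqref{eq:d_delta_identity} directly inside the integral, but this merely reproduces the computation already recorded in \eqref{eq:dirac_domega}.)

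Finally, for the consequence: if $w_1, w_2$ are first variations of a solution to \eqref{eq:canonical}, then \eqref{eq:mscl_differential} holds pointwise on $\Omega$, so by the equalities just established $[ w_1 , w_2 ] _{ \partial K } = ( \mathrm{D} w_1 , w_2 ) _K - ( w_1 , \mathrm{D} w_2 ) _K = 0$ for every $K \Subset \Omega$ with smooth boundary. Conversely, since $[ w_1 , w_2 ] _{ \partial K } = \int_{\partial K} \operatorname{tr} \omega( w_1 , w_2 )$, the vanishing of $[ w_1 , w_2 ] _{ \partial K }$ for all such $K$ is precisely the integral form \eqref{eq:mscl_integral}, which was already observed to be equivalent to \eqref{eq:mscl_differential}. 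I do not anticipate any genuine obstacle here; the entire argument is the composition of the two cited identities with Stokes's theorem, and the only place where a slip is possible is the degree/sign bookkeeping in the twofold application of \eqref{eq:ibp}.
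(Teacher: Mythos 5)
Your proposal is correct and follows essentially the same route as the paper, whose proof simply cites \eqref{eq:ibp} and \eqref{eq:dirac_domega} for the two equalities and then observes that \eqref{eq:mscl_bracket} is a rewriting of the integral form \eqref{eq:mscl_integral}. You have merely spelled out the degree and sign bookkeeping that the paper leaves implicit, and that bookkeeping checks out.
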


\begin{proof}
  The first two equalities follow immediately from \eqref{eq:ibp} and
  \eqref{eq:dirac_domega}. Therefore, \eqref{eq:mscl_bracket} is just
  a rewriting of the integral form of the multisymplectic conservation
  law \eqref{eq:mscl_integral}.
\end{proof}

\begin{example}
  When $ n = 1 $, the integral form of the multisymplectic
  conservation law over a time interval $ K = [t,t + \Delta t] $ says
  that
  $ \omega \bigl( w _1 (t + \Delta t) , w _2 (t + \Delta t ) \bigr) -
  \omega \bigl( w _1 (t), w _2 (t) \bigr) = 0 $. That is, the
  time-$ \Delta t $ flow map preserves the symplectic form.
\end{example}

\begin{example}
  As in \cref{ex:laplace_mscl_diff}, let
  $ z = \sigma \oplus u \oplus \rho $ be a solution to the semilinear
  $k$-form Hodge--Laplace problem, and let
  $ w _i = \tau _i \oplus v _i \oplus \eta _i $ be first variations
  for $ i = 1 , 2 $. Then
  \begin{equation*}
    [ w _1 , w _2 ] _{ \partial K } = \langle \tau _1 ^{\mathrm{tan}} , v _2 ^{\mathrm{nor}} \rangle _{ \partial K } + \langle v _1 ^{\mathrm{tan}} , \eta _2 ^{\mathrm{nor}} \rangle _{ \partial K } - \langle \tau _2 ^{\mathrm{tan}} , v _1 ^{\mathrm{nor}} \rangle _{ \partial K } - \langle v _2 ^{\mathrm{tan}} , \eta _1 ^{\mathrm{nor}} \rangle _{ \partial K } .
  \end{equation*}
  Hence, the multisymplectic conservation law in the integral form
  \eqref{eq:mscl_bracket} states that
  \begin{equation}
    \label{eq:mscl_HL}
    \langle \tau _1 ^{\mathrm{tan}} , v _2 ^{\mathrm{nor}} \rangle _{ \partial K } + \langle v _1 ^{\mathrm{tan}} , \eta _2 ^{\mathrm{nor}} \rangle _{ \partial K } = \langle \tau _2 ^{\mathrm{tan}} , v _1 ^{\mathrm{nor}} \rangle _{ \partial K } + \langle v _2 ^{\mathrm{tan}} , \eta _1 ^{\mathrm{nor}} \rangle _{ \partial K }.
  \end{equation}
  This expresses a symmetry between tangential and normal boundary
  conditions for the variational problem. In the special case of the
  linear Hodge--Laplace problem, where
  $ \partial ^2 F / \partial u ^2 = 0 $, we recover the symmetry of
  the Dirichlet-to-Neumann operator for harmonic $k$-forms due to
  \citet[Equation 3.6]{BeSh2008}.
\end{example}

\begin{example}
  For the vorticity-velocity-pressure formulation of the Stokes
  problem discussed in \cref{ex:vvp,ex:mscl_vvp}, the integral form of
  the multisymplectic conservation law is
  \begin{equation*}
    \int _{ \partial K } \boldsymbol{ \omega } ( w _1 , w _2 ) \cdot \mathrm{d} \mathbf{S} = 0 .
  \end{equation*}
  In terms of the vector and scalar proxy fields for variations
  $ w _i = \boldsymbol{ \tau } _i \oplus \mathbf{v} _i \oplus \eta _i
  $, this says that
  \begin{equation*}
    \int _{ \partial K } ( \boldsymbol{ \tau } _1 \times \mathbf{v} _2 + \mathbf{v} _1 \eta _2 ) \cdot \mathrm{d} \mathbf{S} = \int _{ \partial K } ( \boldsymbol{ \tau } _2 \times \mathbf{v} _1 + \mathbf{v} _2 \eta _1 ) \cdot \mathrm{d} \mathbf{S} ,
  \end{equation*}
  which is equivalent to \eqref{eq:mscl_HL} using the proxies for
  tangential and normal traces in $ \mathbb{R}^3 $, cf.~\citep[Table
  1]{AwFaGuSt2023}.
\end{example}

\subsection{Remarks on multisymplecticity and reciprocity}

As noted in \citet[Section 2.4]{McSt2020} multisymplecticity of
de~Donder--Weyl-type canonical systems is related to
\emph{reciprocity} phenomena in physics, such as Green's reciprocity
in electrostatics and Betti reciprocity in elasticity;
cf.~\citet[Section 5.3]{AbMa1978}, \citet[Section 5.6]{MaHu1994}, and
\citet{LeMaOrWe2003}. We briefly remark on how the relationship
between multisymplecticity and reciprocity generalizes to the setting
of this paper, with Lorentz reciprocity in electromagnetics arising as
a special case.

\begin{definition}
  If $ z \in \Lambda (\Omega) $ is a solution to \eqref{eq:canonical},
  we say that $ w \in \Lambda (\Omega) $ solves the \emph{linearized
    problem with incremental source}
  $ g \colon \Omega \times \operatorname{Alt} \mathbb{R}^n \rightarrow
  \operatorname{Alt} \mathbb{R}^n $ if
  \begin{equation}
    \mathrm{D} w (x) = \frac{ \partial f }{ \partial z } \bigl( x , z (x) \bigr) w (x) + g \bigl( x, w (x) \bigr) , \quad x \in \Omega .
  \end{equation}
  This generalizes \cref{def:variation}, where a first variation
  corresponds to the case $ g = 0 $.
\end{definition}

Suppose now that $ w _1 , w _2 $ are solutions to the linearized
problems with respective incremental sources $ g _1, g _2 $. If
$ \partial f / \partial z $ is symmetric, then
\begin{equation*}
  ( \mathrm{D} w _1 , w _2 ) - \bigl( g _1 ( \cdot , w _1 ) , w _2 \bigr) = ( w _1 , \mathrm{D} w _2 ) - \bigl( w _1, g _2 ( \cdot , w _2 ) \bigr) .
\end{equation*}
Substituting \eqref{eq:dirac_domega} and using the definition of the
Hodge star, we get the reciprocity equation
\begin{equation}
  \label{eq:reciprocity_differential}
  \mathrm{d} \sum _{ k = 1 } ^n ( w _1 ^{ k -1 } \wedge \star w _2 ^k ) - \sum _{ k = 0 } ^n g _1 ^k ( \cdot , w _1 ) \wedge \star  w _2 ^k = \mathrm{d} \sum _{ k = 1 } ^n ( w _2 ^{ k -1 } \wedge \star w _1 ^k ) - \sum _{ k = 0 } ^n g _2 ^k ( \cdot , w _2 ) \wedge \star  w _1 ^k .
\end{equation}
To get an integral form of reciprocity, we integrate over
$ K \Subset \Omega $ and apply Stokes's theorem to obtain
\begin{equation*}
  \sum _{ k = 1 } ^n \int _{ \partial K } w _1 ^{ k -1 } \wedge \star w _2 ^k - \sum _{ k = 0 } ^n \int _K g _1 ^k ( \cdot , w _1 ) \wedge \star w _2 ^k =
    \sum _{ k = 1 } ^n \int _{ \partial K } w _2 ^{ k -1 } \wedge \star w _1 ^k - \sum _{ k = 0 } ^n \int _K g _2 ^k ( \cdot , w _2 ) \wedge \star w _1 ^k ,
\end{equation*}
or equivalently, in the more convenient notation of the previous section,
\begin{equation*}
  \langle w _1 ^{\mathrm{tan}} , w _2 ^{\mathrm{nor}} \rangle _{ \partial K } - \bigl( g _1 ( \cdot , w _1 ) , w _2 \bigr) _K = \langle w _2 ^{\mathrm{tan}} , w _1 ^{\mathrm{nor}} \rangle _{ \partial K } - \bigl( g _2 ( \cdot , w _2 ) , w _1 \bigr) _K .
\end{equation*}
These reciprocity principles describe a symmetry between perturbations
of the system \eqref{eq:canonical} by incremental sources and the
incremental response of the system to these perturbations. In the
special case where $ g _1 = g _2 = 0 $, we recover the differential
and integral forms of the multisymplectic conservation law.

\begin{example}[Lorentz reciprocity for the time-harmonic Maxwell equations]
  Here, we sketch how an important form of electromagnetic reciprocity
  arises from multisymplectic structure. For consistency with
  conventional notation, this example uses $\omega$ to denote
  frequency rather than the multisymplectic two-form. The
  time-harmonic Maxwell equations with current density $\mathbf{J}$
  are
  \begin{align*}
    \operatorname{curl} \mathbf{E} &= - i \omega \mathbf{B} ,\\
    \operatorname{curl} \mathbf{H} &= i \omega \mathbf{D} + \mathbf{J} .
  \end{align*}
  In linear media, the electric field $\mathbf{E}$ is related to the
  electric flux density $\mathbf{D}$ by the constitutive relation
  $ \mathbf{D} = \epsilon \mathbf{E} $, where $\epsilon$ is the
  electric permittivity tensor. Similarly, the magnetic field
  $\mathbf{H}$ is related to the magnetic flux density $\mathbf{B}$ by
  $ \mathbf{B} = \mu \mathbf{H} $, where $\mu$ is the magnetic
  permeability tensor. Using the constitutive relations, we can
  rewrite the equations as
  \begin{equation*}
    \begin{bmatrix}
      & \epsilon ^{-1} \operatorname{curl} \mu ^{-1} \\
      \operatorname{curl} & 
    \end{bmatrix}
    \begin{bmatrix}
      \mathbf{E} \\
      \mathbf{B} 
    \end{bmatrix} =
    \begin{bmatrix}
      i \omega \mathbf{E} + \epsilon ^{-1} \mathbf{J} \\
      - i \omega \mathbf{B} 
    \end{bmatrix}.
  \end{equation*}
  Now, $\mathbf{E}$ and $\mathbf{B}$ may be viewed as vector proxy
  fields for an electric $1$-form and a magnetic $2$-form, with
  $\epsilon$ and $\mu$ corresponding to Hodge star operators (see
  \citet{Hiptmair2002} and references therein). With this
  identification, the time-harmonic Maxwell equations become a
  canonical Hamiltonian system.

  Furthermore, since this system is linear, we can view
  $ ( \mathbf{E} , \mathbf{B} ) $ as a solution to the linearized
  (homogeneous) problem with incremental source
  $ \mathbf{g} = ( \epsilon ^{-1} \mathbf{J} , \mathbf{0} ) $. Hence,
  given two solutions $ ( \mathbf{E} _1 , \mathbf{B} _1 ) $ and
  $ ( \mathbf{E} _2, \mathbf{B} _2 ) $ corresponding to currents
  $ \mathbf{J} _1 $ and $ \mathbf{J} _2 $, the reciprocity principle
  \eqref{eq:reciprocity_differential} becomes
  \begin{equation*}
    \operatorname{div} ( \mathbf{E} _1 \times \mathbf{H} _2 ) - \mathbf{J} _1 \cdot \mathbf{E} _2 = \operatorname{div} ( \mathbf{E} _2 \times \mathbf{H} _1 ) - \mathbf{J} _2 \cdot \mathbf{E} _1 ,
  \end{equation*}
  known as \emph{Lorentz reciprocity} \citep{Lorentz1895}. The case
  $ \mathbf{J} _1 = \mathbf{J} _2 = 0 $ is the multisymplectic
  conservation law.
\end{example}

\section{Multisymplectic hybrid methods for canonical PDEs}
\label{sec:multisymplectic_hybrid}

\subsection{Hilbert spaces of differential forms and weak traces on
  Lipschitz domains}

In \cref{sec:multisymplectic}, we focused on smooth differential forms
on domains with smooth boundary. To provide the functional analytical
setting for FEEC-type methods, we briefly recall several Hilbert
spaces of differential forms and their associated operators, along
with weak notions of tangential and normal traces on Lipschitz
domains. We refer the reader to
\citet*{ArFaWi2006,ArFaWi2010,Arnold2018} and references therein for a
more comprehensive background on the spaces and operators used in
FEEC, and to \citet{Weck2004,KuAu2012} for essential results on weak
traces. A review of these topics can also be found in
\citet{AwFaGuSt2023}, whose approach and notation we will largely
follow.

Given a bounded Lipschitz domain $ \Omega \subset \mathbb{R}^n $, let
$ L ^2 \Lambda ^k (\Omega) $ be the completion of
$ \Lambda ^k (\Omega) $ with respect to the $ L ^2 $ inner product
$ ( \cdot , \cdot ) _\Omega $. Taking $ \mathrm{d} $ and $\delta$ in
the sense of distributions, we may then define the Sobolev-like spaces
\begin{align*}
  H \Lambda ^k (\Omega) &\coloneqq \bigl\{ v \in L ^2 \Lambda ^k (\Omega) : \mathrm{d} v \in L ^2 \Lambda ^{ k + 1 } (\Omega) \bigr\} ,\\
  H ^\ast \Lambda ^k (\Omega) &\coloneqq \bigl\{ v \in L ^2 \Lambda ^k (\Omega) : \delta v \in L ^2 \Lambda ^{ k - 1 } (\Omega) \bigr\} .
\end{align*}
\citet{Weck2004} showed that it is possible to define the tangential
trace of $ \tau \in H \Lambda ^{k-1} (\Omega) $ and normal trace of
$ v \in H ^\ast \Lambda ^k (\Omega) $ on the Lipschitz boundary
$ \partial \Omega $, such that the integration-by-parts formula
\begin{equation*}
  \langle \tau ^{\mathrm{tan}} , v ^{\mathrm{nor}} \rangle _{ \partial \Omega } = ( \mathrm{d} \tau , v ) _\Omega - ( \tau , \delta v ) _\Omega 
\end{equation*}
still holds. Note that these weak traces generally live in subspaces
of $ H ^{ - 1/2 } \Lambda ^{ k -1 } ( \partial \Omega ) $ but not
necessarily in $ L ^2 \Lambda ^{ k -1 } ( \partial \Omega ) $, so
$ \langle \cdot , \cdot \rangle _{ \partial \Omega } $ should be
interpreted as a duality pairing that extends the $ L ^2 $ inner
product on $ \partial \Omega $ \citep[Theorem 8]{Weck2004}.

It is now straightforward to define the direct sums
\begin{alignat*}{2}
  L ^2 \Lambda (\Omega) &= \bigoplus _{ k = 0 } ^n L ^2 \Lambda ^k (\Omega) ,\\
  H \Lambda (\Omega) &= \bigoplus _{ k = 0 } ^n H \Lambda ^k (\Omega)
  &= \bigl\{ w \in L ^2 \Lambda (\Omega) : \mathrm{d} v \in L ^2
  \Lambda (\Omega) \bigr\} ,\\
  H ^\ast \Lambda (\Omega) &= \bigoplus _{ k = 0 } ^n H ^\ast \Lambda
  ^k (\Omega) &= \bigl\{ w \in L ^2 \Lambda (\Omega) : \delta v \in L
  ^2 \Lambda (\Omega) \bigr\} .
\end{alignat*}
so the Hodge--Dirac operator maps
\begin{equation*}
  \mathrm{D} \colon H \Lambda (\Omega) \cap H ^\ast \Lambda (\Omega) \subset L ^2 \Lambda (\Omega) \rightarrow L ^2 \Lambda (\Omega) .
\end{equation*}
Furthermore, since elements of
$ H \Lambda (\Omega) \cap H ^\ast \Lambda (\Omega) $ have both
tangential and normal traces, we have
\begin{equation*}
  [ w _1 , w _2 ] _{ \partial \Omega } = \langle w _1 ^{\mathrm{tan}} , w _2 ^{\mathrm{nor}} \rangle _{ \partial \Omega } - \langle w _2 ^{\mathrm{tan}} , w _1 ^{\mathrm{nor}} \rangle _{ \partial \Omega } = ( \mathrm{D} w _1, w _2 ) _\Omega - ( w _1 , \mathrm{D} w _2 ) _\Omega ,
\end{equation*}
for all
$ w _1 , w _2 \in H \Lambda (\Omega) \cap H ^\ast \Lambda (\Omega)
$.

\subsection{Hybrid variational form with approximate traces}
\label{sec:hybrid}

In this section, we introduce a weak formulation of the canonical
system of PDEs \eqref{eq:canonical}. This generalizes the \emph{flux
  formulation} of \citet{McSt2020} for canonical PDEs in the
de~Donder--Weyl form \eqref{eq:ddw}, which was based on the unified
hybridization framework of \citet{CoGoLa2009} and related to the
earlier framework for DG methods introduced by \citet{ArBrCoMa2001}
for elliptic problems. For the $k$-form Hodge--Laplace problem, our
weak formulation contains that presented in Section~8 of
\citet{AwFaGuSt2023}, which includes conforming, nonconforming, and
HDG-type methods in FEEC.

\begin{remark}
  \label{rmk:non-uniqueness}
  Our goal is to obtain multisymplecticity criteria for weak and
  numerical solutions to \eqref{eq:canonical}, without assuming
  anything about their existence/uniqueness. Indeed,
  multisymplecticity is a statement about variations within a
  \emph{family} of solutions, so uniqueness would make all variations
  trivial and the multisymplectic conservation law a vacuous
  property. For this reason, our variational formulation deliberately
  omits the additional conditions that one imposes to specify a unique
  solution, such as boundary conditions on $ \partial \Omega $ or
  orthogonality to harmonic forms.

  Some readers may be surprised by this statement, since well-posed
  problems often do satisfy variational principles involving
  (nontrivial) variations about a unique solution. However, those
  variations are \emph{arbitrary} test functions, whereas the
  multisymplectic conservation law involves variations that are
  \emph{restricted} to be tangent to the space of solutions---which is
  what the variational equation \eqref{eq:variational} expresses---and
  if there is only a single unique solution, then the tangent space is
  trivial. See \citet*{MaPaSh1998} for a geometric perspective on how
  multisymplecticity arises from a restricted variational principle,
  as well as \citet[Remark 4]{McSt2020}.
\end{remark}

To motivate the approach, observe that if $ z \in \Lambda (\Omega) $
is a solution to \eqref{eq:canonical} and $ w \in \Lambda (\Omega) $
is an arbitrary test function, then integrating by parts with
\eqref{eq:ibp_bracket} over $ K \Subset \Omega $ gives
\begin{equation*}
  ( z , \mathrm{D} w ) _K + [z , w ] _{ \partial K } = ( \mathrm{D} z , w ) _K = \bigl( f ( \cdot , z ) , w \bigr) _K .
\end{equation*}
If $\Omega$ is partitioned into non-overlapping Lipschitz subdomains
$ K \in \mathcal{T} _h $ (e.g., a simplicial triangulation), let us
now define
\begin{equation*}
  ( \cdot , \cdot ) _{ \mathcal{T} _h } \coloneqq \sum _{ K \in \mathcal{T} _h } ( \cdot , \cdot ) _K , \qquad \langle \cdot , \cdot \rangle _{ \partial \mathcal{T} _h } \coloneqq \sum _{ K \in \mathcal{T} _h } \langle \cdot , \cdot \rangle _{ \partial K } , \qquad [ \cdot , \cdot ] _{ \partial \mathcal{T} _h  } \coloneqq \sum _{ K \in \mathcal{T} _h } [ \cdot , \cdot ] _{ \partial K } .
\end{equation*}
By summing the preceding equation over $K \in \mathcal{T} _h $, we see
that $z$ satisfies
\begin{equation*}
  ( z, \mathrm{D} w ) _{ \mathcal{T} _h } + [ z , w ] _{ \partial \mathcal{T} _h } = \bigl( f ( \cdot , z ) , w \bigr) _{ \mathcal{T} _h } ,
\end{equation*}
for all test functions $w$.

We now present a numerical formulation that weakens the regularity and
continuity assumptions of this problem in several ways. First, our
trial and test functions live in a ``broken'' space
\begin{equation*}
  W _h \coloneqq \prod _{ K \in \mathcal{T} _h } W _h (K) , \qquad W _h (K) \subset H \Lambda (K) \cap H ^\ast \Lambda (K) ,
\end{equation*}
with the additional assumption that
$ w _h ^{\mathrm{nor}}, w _h ^{\mathrm{tan}} \in L ^2 \Lambda (
\partial \mathcal{T}_h ) \coloneqq \prod _{ K \in \mathcal{T} _h } L
^2 \Lambda ( \partial K ) $ for all $ w _h \in W _h $. Next, the
normal and tangential traces of $z$ are replaced by approximate traces
living in
\begin{alignat*}{2}
  \widehat{ W } _h ^{\mathrm{nor}} &\coloneqq \prod _{ K \in
    \mathcal{T} _h } \widehat{ W } _h ^{\mathrm{nor}} ( \partial K ) ,
  \qquad & \widehat{ W } _h ^{\mathrm{nor}} ( \partial K ) \subset L
  ^2 \Lambda ( \partial K ) ,\\
  \widehat{ W } _h ^{\mathrm{tan}} &\coloneqq \prod _{ K \in
    \mathcal{T} _h } \widehat{ W } _h ^{\mathrm{tan}} ( \partial K ) ,
  \qquad & \widehat{ W } _h ^{\mathrm{tan}} ( \partial K ) \subset L
  ^2 \Lambda ( \partial K ) .
\end{alignat*}
To impose a relation between the normal and tangential traces, we
choose a \emph{local flux function} (in the terminology of
\citet{McSt2020}), which is a bounded linear map
\begin{equation*}
  \Phi \coloneqq \prod _{ K \in \mathcal{T} _h } \Phi _K , \qquad \Phi _K \colon W _h (K) \times \widehat{ W } _h ^{\mathrm{nor}} ( \partial K ) \times \widehat{ W } _h ^{\mathrm{tan}} ( \partial K ) \rightarrow L ^2 \Lambda ( \partial K ) .
\end{equation*}
We also replace the pointwise definition of $f (x, z) $ by a weaker
local source term,
\begin{equation*}
  f \coloneqq \prod _{ K \in \mathcal{T} _h } f _K , \qquad f _K \colon W _h (K) \rightarrow L ^2 \Lambda (K) .
\end{equation*}
Assume $f$ is at least $ C ^1 $, so that the partial derivative
$ \partial f / \partial z $ may be replaced by the variational
derivative $ f ^\prime $ in discussing variations of weak solutions.

At an interface $ e = \partial K ^+ \cap \partial K ^- $ (e.g., a
facet of the triangulation), traces in
$ L ^2 ( \partial \mathcal{T} _h ) $ are generally double-valued,
since there is no continuity imposed between the $ e ^\pm $
values---so we also need a space of single-valued traces to ``glue
together'' the broken spaces defined above. However, the notion of
what it means to be single-valued is different for normal and
tangential traces. For example, if $ w \in \Lambda (\Omega) $ is
smooth, then
$ w ^{\mathrm{nor}} \rvert _{ e ^+ } = - w ^{\mathrm{nor}} \rvert _{ e
  ^- } $ but
$ w ^{\mathrm{tan}} \rvert _{ e ^+ } = w ^{\mathrm{tan}} \rvert _{ e
  ^- } $, where the sign flip in the normal trace is due to the
orientation-dependence of $ \widehat{ \star } $ in
\cref{def:smooth_traces}. Based on this observation, we now define
jump and average operators for normal and tangential traces.

\begin{definition}
  \label{def:jump_avg}
  Given
  $ \widehat{ w } ^{\mathrm{nor}} \in L ^2 \Lambda ( \partial
  \mathcal{T} _h ) $, define the \emph{normal jump}
  $ \jump{ \widehat{ w } ^{\mathrm{nor}} } \in L ^2 \Lambda ( \partial
  \mathcal{T} _h ) $ by
  \begin{alignat*}{2}
    \jump{ \widehat{ w } ^{\mathrm{nor}} } _{ e ^\pm } &= \frac{1}{2} \bigl( \widehat{ w } ^{\mathrm{nor}} \rvert _{ e ^+ } + \widehat{ w } ^{\mathrm{nor}} \rvert _{ e ^- } \bigr) , \quad & e & \not\subset \partial \Omega  ,\\
    \jump{ \widehat{ w } ^{\mathrm{nor}} } _{ e \phantom{^\pm} } &= 0 , \quad & e & \subset \partial \Omega  ,
  \end{alignat*}
  and the \emph{normal average}
  $ \av{ \widehat{ w } ^{\mathrm{nor}} } \in L ^2 \Lambda ( \partial
  \mathcal{T} _h ) $ by
  \begin{alignat*}{2}
    \av{ \widehat{ w } ^{\mathrm{nor}} } _{ e ^\pm } &= \frac{1}{2} \bigl( \widehat{ w } ^{\mathrm{nor}} \rvert _{ e ^\pm } - \widehat{ w } ^{\mathrm{nor}} \rvert _{ e ^\mp } \bigr) , \quad & e & \not\subset \partial \Omega  ,\\
    \av{ \widehat{ w } ^{\mathrm{nor}} } _{ e \phantom{^\pm} } &= \widehat{ w } ^{\mathrm{nor}} \rvert _e  , \quad & e & \subset \partial \Omega  .
  \end{alignat*}
  On the other hand, given
  $ \widehat{ w } ^{\mathrm{tan}} \in L ^2 \Lambda ( \partial
  \mathcal{T} _h ) $, define the \emph{tangential jump}
  $ \jump{ \widehat{ w } ^{\mathrm{tan}} } \in L ^2 \Lambda ( \partial
  \mathcal{T} _h ) $ by
  \begin{alignat*}{2}
    \llbracket \widehat{ w } ^{\mathrm{tan}} \rrbracket _{ e ^\pm } &= \frac{1}{2} \bigl( \widehat{ w } ^{\mathrm{tan}} \rvert _{ e ^\pm } - \widehat{ w } ^{\mathrm{tan}} \rvert _{ e ^\mp } \bigr) , \quad & e & \not\subset \partial \Omega  ,\\
    \llbracket \widehat{ w } ^{\mathrm{tan}} \rrbracket _{ e \phantom{^\pm} } &= \widehat{ w } ^{\mathrm{tan}} \rvert _e , \quad & e & \subset \partial \Omega  ,
  \end{alignat*}
  and the \emph{tangential average}
  $ \av{ \widehat{ w } ^{\mathrm{tan}} } \in L ^2 \Lambda ( \partial
  \mathcal{T} _h ) $ by
  \begin{alignat*}{2}
    \av{ \widehat{ w } ^{\mathrm{tan}} } _{ e ^\pm } &= \frac{1}{2} \bigl( \widehat{ w } ^{\mathrm{tan}} \rvert _{ e ^+ } + \widehat{ w } ^{\mathrm{tan}} \rvert _{ e ^- } \bigr) , \quad & e & \not\subset \partial \Omega  ,\\
    \av{ \widehat{ w } ^{\mathrm{tan}} } _{ e \phantom{^\pm}} &= 0  , \quad & e & \subset \partial \Omega .
  \end{alignat*}
  A normal or tangential trace is \emph{single-valued} if its jump is
  zero on $ \partial \mathcal{T} _h \setminus \partial \Omega $.
\end{definition}

\begin{remark}
  Normal jump corresponds precisely to tangential average, and
  tangential jump to normal average.  The reader may be surprised that
  the definition of jump contains a factor of $ \frac{1}{2} $. This is
  because it is still an element of
  $ L ^2 \Lambda ( \partial \mathcal{T} _h ) $, so the jump across $e$
  is divided equally between $ e ^\pm $. Standard definitions choose
  only one of $ e ^\pm $ to represent $e$, which makes it harder to
  distinguish between orientation-independent and
  orientation-dependent single-valued traces. The definitions above
  are unchanged if we flip $ e ^\pm $ and thus do not require a global
  orientation of the facets.
\end{remark}

The identity
$ \widehat{ w } = \av { \widehat{ w } } + \jump { \widehat{ w } } $
holds for both normal and tangential traces. Consequently, we obtain
the following version of the familiar average-jump formulas for the
inner product on $ L ^2 \Lambda ( \partial \mathcal{T} _h ) $.

\begin{proposition}
  \label{prop:average-jump}
  For all
  $ \widehat{ w } _1^{\mathrm{nor}} , \widehat{ w } _1
  ^{\mathrm{tan}}, \widehat{ w } _2 ^{\mathrm{nor}} , \widehat{ w }
  _2^{\mathrm{tan}} \in L ^2 \Lambda ( \partial \mathcal{T} _h ) $,
  \begin{align*}
    \langle \widehat{ w } _1^{\mathrm{nor}} , \widehat{ w } _2^{\mathrm{tan}} \rangle _{ \partial \mathcal{T} _h } &= \bigl\langle \av{ \widehat{ w } _1^{\mathrm{nor}} } , \jump{ \widehat{ w } _2^{\mathrm{tan}} } \bigr\rangle _{ \partial \mathcal{T} _h } + \bigl\langle \jump{ \widehat{ w } _1^{\mathrm{nor}} } , \av{ \widehat{ w } _2^{\mathrm{tan}} } \bigr\rangle _{ \partial \mathcal{T} _h } ,\\
    \langle \widehat{ w } _1 ^{\mathrm{nor}} , \widehat{ w } _2 ^{\mathrm{nor}} \rangle _{ \partial \mathcal{T} _h } &= \bigl\langle \av { \widehat{ w } _1 ^{\mathrm{nor}} } , \av { \widehat{ w } _2 ^{\mathrm{nor}} } \bigr\rangle _{ \partial \mathcal{T} _h } + \bigl\langle \jump { \widehat{ w } _1 ^{\mathrm{nor}} } , \jump { \widehat{ w } _2 ^{\mathrm{nor}} } \bigr\rangle _{ \partial \mathcal{T} _h }, \\
    \langle \widehat{ w } _1 ^{\mathrm{tan}} , \widehat{ w } _2 ^{\mathrm{tan}} \rangle _{ \partial \mathcal{T} _h } &= \bigl\langle \av { \widehat{ w } _1 ^{\mathrm{tan}} } , \av { \widehat{ w } _2 ^{\mathrm{tan}} } \bigr\rangle _{ \partial \mathcal{T} _h } + \bigl\langle \jump { \widehat{ w } _1 ^{\mathrm{tan}} } , \jump { \widehat{ w } _2 ^{\mathrm{tan}} } \bigr\rangle _{ \partial \mathcal{T} _h } .
  \end{align*}
\end{proposition}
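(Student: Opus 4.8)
The plan is to reduce all three identities to statements of orthogonality between the average and jump parts, exploiting the decomposition $\widehat{w} = \av{\widehat{w}} + \jump{\widehat{w}}$ recorded just above the proposition. Expanding each of the three inner products bilinearly, the proposition is equivalent to the vanishing relations
\[
  \bigl\langle \av{ \widehat{w}_1^{\mathrm{nor}} } , \jump{ \widehat{w}_2^{\mathrm{nor}} } \bigr\rangle_{ \partial \mathcal{T}_h } = 0, \qquad \bigl\langle \av{ \widehat{w}_1^{\mathrm{tan}} } , \jump{ \widehat{w}_2^{\mathrm{tan}} } \bigr\rangle_{ \partial \mathcal{T}_h } = 0,
\]
\[
  \bigl\langle \av{ \widehat{w}_1^{\mathrm{nor}} } , \av{ \widehat{w}_2^{\mathrm{tan}} } \bigr\rangle_{ \partial \mathcal{T}_h } = 0 = \bigl\langle \jump{ \widehat{w}_1^{\mathrm{nor}} } , \jump{ \widehat{w}_2^{\mathrm{tan}} } \bigr\rangle_{ \partial \mathcal{T}_h },
\]
holding for all admissible traces (together with the versions obtained by interchanging the subscripts $1$ and $2$, which are the same statements). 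Granting these, the first formula of the proposition is exactly what survives in the expansion of $ \langle \widehat{w}_1^{\mathrm{nor}} , \widehat{w}_2^{\mathrm{tan}} \rangle_{ \partial \mathcal{T}_h } $, and the second and third are what survive in the expansions of $ \langle \widehat{w}_1^{\mathrm{nor}} , \widehat{w}_2^{\mathrm{nor}} \rangle_{ \partial \mathcal{T}_h } $ and $ \langle \widehat{w}_1^{\mathrm{tan}} , \widehat{w}_2^{\mathrm{tan}} \rangle_{ \partial \mathcal{T}_h } $.

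To establish the vanishing relations, I would introduce the facet-swap operator $ \iota $ on $ L^2 \Lambda ( \partial \mathcal{T}_h ) $ which, on each interior facet $ e = \partial K^+ \cap \partial K^- $, interchanges the components over $ e^+ $ and $ e^- $, and acts as the identity on facets contained in $ \partial \Omega $. Decomposing $ \langle \cdot , \cdot \rangle_{ \partial \mathcal{T}_h } $ into its interior-facet and boundary-facet contributions, one sees that $ \iota $ preserves each and is an isometric involution. By inspection of \cref{def:jump_avg}, on an interior facet the normal average $ \av{ \widehat{w}^{\mathrm{nor}} } $ and the tangential jump $ \jump{ \widehat{w}^{\mathrm{tan}} } $ lie in the $ (-1) $-eigenspace of $ \iota $ (their $ e^+ $ and $ e^- $ components are opposite), whereas the normal jump $ \jump{ \widehat{w}^{\mathrm{nor}} } $ and the tangential average $ \av{ \widehat{w}^{\mathrm{tan}} } $ lie in the $ (+1) $-eigenspace (their $ e^+ $ and $ e^- $ components agree); on a boundary facet, the normal jump and the tangential average vanish outright. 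Hence, in each of the four pairings displayed above, the interior-facet contribution pairs a $ (+1) $-eigenvector with a $ (-1) $-eigenvector of the isometry $ \iota $, so it equals its own negative and therefore vanishes, while the boundary-facet contribution vanishes because one of the two factors is identically zero there.

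The argument is otherwise routine, and the only point needing a little care---what I would regard as the single (mild) obstacle---is verifying that $ \iota $ is genuinely an isometry, i.e.\ that the facet inner product contributing to $ \langle \cdot , \cdot \rangle_{ \partial K } $ is independent of whether $ e $ is regarded as a face of $ K^+ $ or of $ K^- $. This holds because $ \int_e a \wedge \widehat{\star} b = \int_e (a,b)\, \mathrm{d}s $, with the pointwise inner product on alternating forms and the surface measure $ \mathrm{d}s $ on $ e \subset \mathbb{R}^n $ both orientation-independent. Alternatively, one can dispense with $ \iota $ altogether and simply verify the three identities by the elementary two-term computation on a single facet, writing $ \widehat{w}|_{e^\pm} $ in terms of its average and jump and summing the $ e^+ $ and $ e^- $ contributions; the cross terms cancel for the same parity reason.
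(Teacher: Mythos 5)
Your proof is correct and takes essentially the same route as the paper's: decompose each trace as average plus jump, expand bilinearly, and kill the cross terms by noting that on each interior facet one factor flips sign across $e^\pm$ while the other does not (your involution $\iota$ merely formalizes this parity argument, and the ``elementary two-term computation'' you offer as an alternative is precisely the paper's proof). The only differences are cosmetic: you make explicit the boundary-facet cases and the orientation-independence of the facet inner product, both of which the paper leaves implicit.
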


\begin{proof}
  Writing
  $ \widehat{ w } _1^{\mathrm{nor}} = \av{ \widehat{ w } _1^{\mathrm{nor}}
  } + \jump{ \widehat{ w } _1^{\mathrm{nor}} } $ and
  $ \widehat{ w } _2^{\mathrm{tan}} = \av{ \widehat{ w } _2^{\mathrm{tan}}
  } + \jump{ \widehat{ w } _2^{\mathrm{tan}} } $, we expand
  \begin{align*}
    \langle \widehat{ w } _1^{\mathrm{nor}} , \widehat{ w } _2^{\mathrm{tan}} \rangle _{ \partial \mathcal{T} _h } =
    \bigl\langle \av{ \widehat{ w } _1^{\mathrm{nor}} } , \av{ \widehat{
        w } _2^{\mathrm{tan}} } \bigr\rangle _{ \partial \mathcal{T} _h
    } &+ \bigl\langle \av{ \widehat{ w } _1^{\mathrm{nor}} } , \jump{
      \widehat{ w } _2^{\mathrm{tan}} } \bigr\rangle _{ \partial
        \mathcal{T} _h }\\
      &+ \bigl\langle \jump{ \widehat{ w }
      _1^{\mathrm{nor}} } , \av{ \widehat{ w } _2^{\mathrm{tan}} }
    \bigr\rangle _{ \partial \mathcal{T} _h } + \bigl\langle \jump{
      \widehat{ w } _1^{\mathrm{nor}} } , \jump{ \widehat{ w }
      _2^{\mathrm{tan}} } \bigr\rangle _{ \partial \mathcal{T} _h }.
  \end{align*}
  If $e$ is an interior facet,
  $ \av{ \widehat{ w } _1^{\mathrm{nor}} } $ flips sign across
  $ e ^\pm $ but $ \av{ \widehat{ w } _2^{\mathrm{tan}}} $ does
  not. Therefore,
  \begin{equation*}
    \bigl\langle \av{ \widehat{ w } _1^{\mathrm{nor}} } , \av{ \widehat{
        w } _2^{\mathrm{tan}} } \bigr\rangle _{ \partial \mathcal{T} _h } = 0 ,
  \end{equation*}
  since the $ e ^\pm $ terms cancel. Likewise,
  $ \jump{ \widehat{ w } _2^{\mathrm{tan}}} $ flips sign across
  $ e ^\pm $ but $ \jump{ \widehat{ w } _1^{\mathrm{nor}} } $ does
  not, so
  \begin{equation*}
    \bigl\langle \jump{ \widehat{ w } _1^{\mathrm{nor}} } , \jump{
      \widehat{ w } _2^{\mathrm{tan}} } \bigr\rangle _{ \partial
      \mathcal{T} _h } = 0 ,
  \end{equation*}
  which proves the first formula. The other two formulas are proved by
  the same argument.
\end{proof}

Finally, define the subspaces of single-valued approximate tangential
traces
$ \ringhat{V} _h ^{\mathrm{tan}} \subset
  \widehat{ V } _h ^{\mathrm{tan}} $ by
\begin{equation*}
  \ringhat{V} _h ^{\mathrm{tan}} \coloneqq \bigl\{ \widehat{ w } _h ^{\mathrm{tan}} \in \widehat{ W } _h ^{\mathrm{tan}} : \jump{\widehat{ w } _h ^{\mathrm{tan}}} = 0 \bigr\} , \qquad \widehat{ V } _h ^{\mathrm{tan}} \coloneqq \bigl\{ \widehat{ w } _h ^{\mathrm{tan}} \in \widehat{ W } _h ^{\mathrm{tan}} : \jump{ \widehat{ w } _h ^{\mathrm{tan}}} = 0 \text{ on } \partial \mathcal{T} _h \setminus \partial \Omega \bigr\} .
\end{equation*}
We now seek solutions
$ ( z _h , \widehat{ z } _h ^{\mathrm{nor}} , \widehat{ z } _h
^{\mathrm{tan}} ) \in W _h \times \widehat{ W } _h ^{\mathrm{nor}}
\times \widehat{ V } _h ^{\mathrm{tan}} $ satisfying
\begin{alignat*}{2}
  ( z _h , \mathrm{D} w _h ) _{ \mathcal{T} _h } + \langle \widehat{ z } _h ^{\mathrm{tan}} , w _h ^{\mathrm{nor}} \rangle  _{ \partial \mathcal{T} _h } - \langle \widehat{ z } _h ^{\mathrm{nor}} , w _h ^{\mathrm{tan}} \rangle  _{ \partial \mathcal{T} _h } &= \bigl( f (z _h) , w _h \bigr) _{ \mathcal{T} _h } , \quad & \forall w _h &\in W _h , \\
  \bigl\langle \Phi ( z _h , \widehat{ z } _h ^{\mathrm{nor}}, \widehat{ z } _h ^{\mathrm{tan}} ) , \widehat{ w } _h ^{\mathrm{nor}} \bigr\rangle _{ \partial \mathcal{T} _h } &= 0, \quad & \forall \widehat{ w } _h ^{\mathrm{nor}} &\in \widehat{ W } _h ^{\mathrm{nor}} , \\
  \langle \widehat{ z } _h ^{\mathrm{nor}} , \widehat{ w } _h ^{\mathrm{tan}} \rangle _{ \partial \mathcal{T} _h } &= 0 , \quad & \forall \widehat{ w } _h ^{\mathrm{tan}} &\in \ringhat{V} _h ^{\mathrm{tan}}.
\end{alignat*}
By abbreviating
$ ( z _h , \widehat{ z } _h ) \coloneqq ( z _h , \widehat{ z } _h
^{\mathrm{nor}} , \widehat{ z } _h ^{\mathrm{tan}} ) $, we can also
write this in the equivalent form
\begin{subequations}
  \label{eq:weakform}
  \begin{alignat}{2}
    ( z _h , \mathrm{D} w _h ) _{ \mathcal{T} _h } + [ \widehat{ z } _h , w _h ] _{ \partial \mathcal{T} _h } &= \bigl( f (z _h) , w _h \bigr) _{ \mathcal{T} _h } , \quad & \forall w _h &\in W _h , \label{eq:weakform_w} \\
    \bigl\langle \Phi ( z _h , \widehat{ z } _h ) , \widehat{ w } _h ^{\mathrm{nor}} \bigr\rangle _{ \partial \mathcal{T} _h } &= 0, \quad & \forall \widehat{ w } _h ^{\mathrm{nor}} &\in \widehat{ W } _h ^{\mathrm{nor}} , \label{eq:weakform_wnor} \\
    \langle \widehat{ z } _h ^{\mathrm{nor}} , \widehat{ w } _h ^{\mathrm{tan}} \rangle _{ \partial \mathcal{T} _h } &= 0 , \quad & \forall \widehat{ w } _h ^{\mathrm{tan}} &\in \ringhat{V} _h ^{\mathrm{tan}}  \label{eq:weakform_wtan} .
  \end{alignat}
  In light of \cref{prop:average-jump}, observe that
  \eqref{eq:weakform_wtan} may be rewritten equivalently as
  \begin{equation}
    \label{eq:jump.conservativity}\tag{\ref*{eq:weakform_wtan}$^\prime$}
    \bigl\langle \jump{ \widehat{ z } _h ^{\mathrm{nor}} } , \widehat{ w } _h ^{\mathrm{tan}} \bigr\rangle _{ \partial \mathcal{T} _h } = 0, \quad \forall \widehat{ w } _h ^{\mathrm{tan}} \in \ringhat{V} _h ^{\mathrm{tan}} .
  \end{equation}
\end{subequations}
Using the terminology of \citet{CoGoLa2009},
\eqref{eq:weakform_w}--\eqref{eq:weakform_wnor} are \emph{local
  solvers} for \eqref{eq:canonical} on each $ K \in \mathcal{T} _h $
with tangential boundary conditions
$ \widehat{ z } _h ^{\mathrm{tan}} $, and \eqref{eq:weakform_wtan} is
a \emph{conservativity condition} that globally couples the local
problems by requiring that $ \widehat{ z } _h ^{\mathrm{nor}} $ be
weakly single-valued.

Note that, since all traces are assumed to be in
$ L ^2 \Lambda ( \partial \mathcal{T} _h ) $, we may henceforth
interpret
$ \langle \cdot , \cdot \rangle _{ \partial \mathcal{T} _h } $ as the
$ L ^2 $ inner product rather than as a duality pairing between weaker
trace spaces.

\begin{example}
  For the semilinear $k$-form Hodge--Laplace problem, we have
  \begin{equation*}
    z _h = \sigma _h \oplus u _h \oplus \rho _h , \qquad 
    \widehat{ z } _h ^{\mathrm{nor}} = \widehat{ u } _h
    ^{\mathrm{nor}} \oplus \widehat{ \rho } _h ^{\mathrm{nor}} , \qquad 
    \widehat{ z } _h ^{\mathrm{tan}} = \widehat{ \sigma } _h
    ^{\mathrm{tan}} \oplus \widehat{ u } _h ^{\mathrm{tan}} .
  \end{equation*}
  In terms of the individual form degrees, \eqref{eq:weakform_w} says
  that
  \begin{subequations}
  \label{eq:weakform_HL}
    \begin{alignat}{2}
      ( u _h , \mathrm{d} \tau _h ) _{ \mathcal{T} _h } - \langle \widehat{ u } _h ^{\mathrm{nor}} , \tau _h ^{\mathrm{tan}} \rangle _{ \partial \mathcal{T} _h } &= ( \sigma _h , \tau _h ) _{ \mathcal{T} _h } , \quad & \forall \tau _h &\in W _h ^{ k -1 } , \label{eq:weakform_tau}\\
      ( \sigma _h , \delta v _h ) _{ \mathcal{T} _h } + ( \rho _h ,
      \mathrm{d} v _h ) _{ \mathcal{T} _h } + \langle \widehat{ \sigma }
      _h ^{\mathrm{tan}} , v _h ^{\mathrm{nor}} \rangle _{ \partial
        \mathcal{T} _h } - \langle \widehat{ \rho } _h ^{\mathrm{nor}} ,
      v _h ^{\mathrm{tan}} \rangle _{ \partial \mathcal{T} _h } &=
      \biggl( \frac{ \partial F }{ \partial u _h } , v _h \biggr) _{ \mathcal{T} _h } , \quad &\forall v _h &\in W _h ^k , \label{eq:weakform_v}\\
      ( u _h , \delta \eta _h ) _{ \mathcal{T} _h } + \langle \widehat{ u } _h ^{\mathrm{tan}} , \eta _h ^{\mathrm{nor}} \rangle _{ \partial \mathcal{T} _h } &= ( \rho _h , \eta _h ) _{ \mathcal{T} _h } , \quad &\forall \eta _h &\in W _h ^{ k + 1 } \label{eq:weakform_eta}.
    \end{alignat}
  \end{subequations}
  For the linear $k$-form Hodge--Laplace problem, this agrees with
  Equations (20a)--(20c) in \citet{AwFaGuSt2023}. The conservativity
  condition \eqref{eq:weakform_wtan} becomes the pair of conditions
  \begin{alignat*}{2}
    \langle \widehat{ u } _h ^{\mathrm{nor}} , \widehat{ \tau } _h ^{\mathrm{tan}} \rangle _{ \partial \mathcal{T} _h } &= 0 , \quad &\forall \widehat{ \tau } _h ^{\mathrm{tan}} &\in \ringhat{V} _h ^{k -1 , \mathrm{tan}} ,\\
    \langle \widehat{ \rho } _h ^{\mathrm{nor}}, \widehat{ v } _h ^{\mathrm{tan}} \rangle _{ \partial \mathcal{T} _h } &= 0 , \quad &\forall \widehat{ v } _h ^{\mathrm{tan}} &\in \ringhat{V} _h ^{k, \mathrm{tan}} ,
  \end{alignat*}
  which agrees with Equations (20g)--(20h) in
  \citep{AwFaGuSt2023}. Several choices are possible for the spaces
  and flux function $\Phi$, leading to various families of conforming,
  nonconforming, and HDG-type methods
  \citep[Section~8.2]{AwFaGuSt2023}. Specific choices and the
  corresponding methods will be discussed further in
  \cref{sec:particular_methods}.
\end{example}

\subsection{Local multisymplecticity criteria}

Elements of $ H \Lambda (K) \cap H ^\ast \Lambda (K) $ generally do
not have well-defined pointwise values, so it does not make sense to
talk about pointwise multisymplecticity as in
\cref{sec:mscl_differential}. Our approach is motivated instead by the
integral form \eqref{eq:mscl_bracket} of the multisymplectic
conservation law, applied to the \emph{approximate} tangential and
normal traces of variations. We first define what it means for
$ ( w _1 , \widehat{ w } _1 ) $ and $ ( w _2 , \widehat{ w } _2 ) $ to
be variations of \eqref{eq:weakform}.

\begin{definition}
  We say that
  $ ( w _i , \widehat{ w } _i ) \coloneqq ( w _i , \widehat{ w } _i
  ^{\mathrm{nor}} , \widehat{ w } _i ^{\mathrm{tan}} ) \in W _h \times
  \widehat{ W } _h ^{\mathrm{nor}} \times \widehat{ V } _h
  ^{\mathrm{tan}} $ is a \emph{(first) variation} of a solution
  $ ( z _h , \widehat{ z } _h ) $ to \eqref{eq:weakform} if it
  satisfies the linearized system of equations
  \begin{subequations}
    \label{eq:weakvar}
    \begin{alignat}{2}
      ( w _i , \mathrm{D} w _h ) _{ \mathcal{T} _h } + [ \widehat{ w } _i , w _h ] _{ \partial \mathcal{T} _h } &= \bigl( f ^\prime (z _h) w _i , w _h \bigr) _{ \mathcal{T} _h } , \quad & \forall w _h &\in W _h , \label{eq:weakvar_w} \\
      \bigl\langle \Phi ( w _i , \widehat{ w } _i ) , \widehat{ w } _h ^{\mathrm{nor}} \bigr\rangle _{ \partial \mathcal{T} _h } &= 0, \quad & \forall \widehat{ w } _h ^{\mathrm{nor}} &\in \widehat{ W } _h ^{\mathrm{nor}} , \label{eq:weakvar_wnor} \\
      \langle \widehat{ w } _i ^{\mathrm{nor}} , \widehat{ w } _h ^{\mathrm{tan}} \rangle _{ \partial \mathcal{T} _h } &= 0 , \quad & \forall \widehat{ w } _h ^{\mathrm{tan}} &\in \ringhat{V} _h ^{\mathrm{tan}} \label{eq:weakvar_wtan} .
    \end{alignat}
  \end{subequations}
\end{definition}

\begin{remark}
  Note that \eqref{eq:weakvar} approximates the variational equation
  \eqref{eq:variational} using the same weak formulation by which
  \eqref{eq:weakform} approximates \eqref{eq:canonical}. In other
  words, variations of a numerical solution are numerical solutions of
  the variational equation. The reason is that the left-hand side of
  \eqref{eq:weakform} corresponds to a bounded linear operator, which
  is unchanged when linearizing. This can be seen as a FEEC version of
  a property that, in the context of linear methods for numerical
  ODEs, \citet{BoSc1994} call \emph{closure under differentiation},
  which they use to characterize symplectic integrators. See also
  \citet*[Lemma~4.1]{HaLuWa2006}.
\end{remark}

The following definition of multisymplecticity depends only on the
properties of the local problems
\eqref{eq:weakform_w}--\eqref{eq:weakform_wnor} and the corresponding
variational equations \eqref{eq:weakvar_w}--\eqref{eq:weakvar_wnor} on
each $ K \in \mathcal{T} _h $. Later, in \cref{sec:global_mscl}, we
will explore conditions under which \eqref{eq:weakform_wtan} and
\eqref{eq:weakvar_wtan} allow the local multisymplectic conservation
laws to be ``glued together'' to obtain a stronger notion of
multisymplecticity.

\begin{definition}
  \label{def:local_multisymplecticity}
  The weak problem \eqref{eq:weakform} is \emph{multisymplectic} if,
  whenever $ ( z _h , \widehat{ z } _h ) $ satisfies
  \eqref{eq:weakform_w}--\eqref{eq:weakform_wnor} with
  $ f ^\prime ( z _h ) $ being symmetric, and
  $ ( w _1 , \widehat{ w } _1 ) $ and $ ( w _2 , \widehat{ w } _2 ) $
  satisfy \eqref{eq:weakvar_w}--\eqref{eq:weakvar_wnor}, we have
  \begin{equation}
    \label{eq:mscl_flux}
    [ \widehat{ w } _1 , \widehat{ w } _2 ] _{ \partial K } = 0 ,
  \end{equation}
  for all $ K \in \mathcal{T} _h $.
\end{definition}

The following result generalizes Lemma~2 of \citet{McSt2020},
characterizing multisymplecticity entirely in terms of the jumps
$ (\widehat{ w } _i - w _i ) \bigr\rvert _{ \partial K } $ between
approximate and actual traces.

\begin{lemma}
  \label{lem:jump}
  If $ ( z _h , \widehat{ z } _h ) $ satisfies \eqref{eq:weakform_w}
  with $ f ^\prime ( z _h ) $ being symmetric, and if
  $ ( w _1 , \widehat{ w } _1 ) $ and $ ( w _2 , \widehat{ w } _2 ) $
  satisfy \eqref{eq:weakvar_w}, then
  \begin{equation*}
    [ \widehat{ w } _1 - w _1 , \widehat{ w } _2 - w _2 ] _{ \partial K } = [ \widehat{ w } _1 , \widehat{ w } _2 ] _{ \partial K } ,
  \end{equation*}
  for all $ K \in \mathcal{T} _h $. Consequently, the local
  multisymplecticity condition \eqref{eq:mscl_flux} holds if and
  only if
  \begin{equation}
    \label{eq:mscl_jump}
    [ \widehat{ w } _1 - w _1 , \widehat{ w } _2 - w _2 ] _{ \partial K } = 0 .
  \end{equation}
\end{lemma}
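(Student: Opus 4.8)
The plan is to reduce everything to the element-local form of the variational equation \eqref{eq:weakvar_w}, combine it with the integration-by-parts identity \eqref{eq:ibp_bracket}, and then push the computation through using bilinearity and antisymmetry of $[\cdot,\cdot]_{\partial K}$ together with the symmetry of $f^\prime(z_h)$. First I would localize: because $W_h = \prod_{K \in \mathcal{T}_h} W_h(K)$, restricting the test functions in \eqref{eq:weakvar_w} to a single factor $W_h(K)$ (extended by zero) yields $(w_i, \mathrm{D} w_h)_K + [\widehat{w}_i, w_h]_{\partial K} = (f^\prime(z_h) w_i, w_h)_K$ for all $w_h \in W_h(K)$, where the bracket now pairs the approximate traces of $\widehat{w}_i$ with the actual traces of $w_h$; since all of these lie in $L^2\Lambda(\partial K)$, this is a genuine $L^2$ pairing on $\partial K$. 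Taking $w_h = w_2$ in the equation for $(w_1, \widehat{w}_1)$ and $w_h = w_1$ in the equation for $(w_2, \widehat{w}_2)$, and using the antisymmetry $[w_1, \widehat{w}_2]_{\partial K} = -[\widehat{w}_2, w_1]_{\partial K}$, I obtain
\begin{equation*}
  [\widehat{w}_1, w_2]_{\partial K} = (f^\prime(z_h) w_1, w_2)_K - (w_1, \mathrm{D} w_2)_K, \qquad [w_1, \widehat{w}_2]_{\partial K} = (w_2, \mathrm{D} w_1)_K - (f^\prime(z_h) w_2, w_1)_K .
\end{equation*}
On the other hand, the integration-by-parts identity \eqref{eq:ibp_bracket}, extended from smooth forms to $H\Lambda(K) \cap H^\ast\Lambda(K)$ on the Lipschitz element $K$ exactly as recalled for $\Omega$ above, gives $[w_1, w_2]_{\partial K} = (\mathrm{D} w_1, w_2)_K - (w_1, \mathrm{D} w_2)_K$.

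Next I would expand by bilinearity,
\begin{equation*}
  [\widehat{w}_1 - w_1, \widehat{w}_2 - w_2]_{\partial K} = [\widehat{w}_1, \widehat{w}_2]_{\partial K} - [\widehat{w}_1, w_2]_{\partial K} - [w_1, \widehat{w}_2]_{\partial K} + [w_1, w_2]_{\partial K} ,
\end{equation*}
and substitute the three identities above. The two $(w_1, \mathrm{D} w_2)_K$ terms cancel, the terms $(\mathrm{D} w_1, w_2)_K$ and $(w_2, \mathrm{D} w_1)_K$ cancel by symmetry of the $L^2$ inner product, and what remains of $-[\widehat{w}_1, w_2]_{\partial K} - [w_1, \widehat{w}_2]_{\partial K} + [w_1, w_2]_{\partial K}$ is $-(f^\prime(z_h) w_1, w_2)_K + (f^\prime(z_h) w_2, w_1)_K$, which vanishes since $f^\prime(z_h)$ is symmetric (equivalently, since $f = \prod_K f_K$ is block-diagonal over the elements, each $f_K^\prime(z_h)$ is symmetric). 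This leaves $[\widehat{w}_1 - w_1, \widehat{w}_2 - w_2]_{\partial K} = [\widehat{w}_1, \widehat{w}_2]_{\partial K}$, which is the claimed identity; the equivalence of \eqref{eq:mscl_flux} and \eqref{eq:mscl_jump} is then immediate.

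The calculation is elementary once the right identities are assembled, so I do not anticipate a genuine obstacle. The only points requiring care are the legitimacy of localizing to a single element—justified by the product structure of $W_h$—and the fact that the bracket $[\cdot,\cdot]_{\partial K}$, the mixed pairings $[\widehat{w}_i, w_j]_{\partial K}$, and the identity \eqref{eq:ibp_bracket} all make sense in the $L^2$-trace setting on a merely Lipschitz element $K$, which is already supplied by the functional-analytic preliminaries. The main practical hazard is keeping the signs straight in the bilinearity and antisymmetry bookkeeping.
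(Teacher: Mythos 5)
Your proposal is correct and follows essentially the same route as the paper's proof: evaluate the localized variational equation \eqref{eq:weakvar_w} at $w_h = w_2$ (for $i=1$) and $w_h = w_1$ (for $i=2$), invoke symmetry of $f^\prime(z_h)$ and the integration-by-parts identity \eqref{eq:ibp_bracket}, and expand $[\widehat{w}_1 - w_1, \widehat{w}_2 - w_2]_{\partial K}$ by bilinearity. The only differences are organizational (you substitute all three identities into the bilinear expansion at once, whereas the paper first records the intermediate identity $[\widehat{w}_1, w_2]_{\partial K} + [w_1, \widehat{w}_2]_{\partial K} = [w_1, w_2]_{\partial K}$), and your explicit remarks on localization and the $L^2$-trace setting are sound.
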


\begin{proof}
  Since $ ( w _i , \widehat{ w } _i ) $ satisfies \eqref{eq:weakvar_w}
  for $ i = 1, 2 $, we have
  \begin{equation*}
    [ \widehat{ w } _i , w _h ] _{ \partial K } = \bigl( f ^\prime ( z _h ) w _i , w _h \bigr) _K - ( w _i , \mathrm{D} w _h ) _K ,
  \end{equation*}
  for all $ w _h \in W _h $ and $ K \in \mathcal{T} _h $. In
  particular, taking $ i = 1 $ with $ w _h = w _2 $ and $ i = 2 $ with
  $ w _h = w _1 $ gives
  \begin{align*}
    [ \widehat{ w } _1 , w _2 ] _{ \partial K } &= \bigl( f ^\prime ( z _h ) w _1 , w _2 \bigr) _K - ( w _1 , \mathrm{D} w _2 ) _K ,\\
    [ \widehat{ w } _2 , w _1 ] _{ \partial K } &= \bigl( f ^\prime ( z _h ) w _2 , w _1 \bigr) _K - ( w _2 , \mathrm{D} w _1 ) _K .
  \end{align*}
  Since $ f ^\prime ( z _h ) $ is assumed to be symmetric, we have
  $ \bigl( f ^\prime ( z _h ) w _1 , w _2 \bigr) _K = \bigl( f ^\prime
  ( z _h ) w _2 , w _1 \bigr) _K $. Therefore, subtracting the two
  equations above and using the antisymmetry of
  $ [ \cdot , \cdot ] _{ \partial K } $, we obtain
  \begin{equation*}
    [ \widehat{ w } _1, w _2 ] _{ \partial K } + [w _1 ,  \widehat{ w } _2 ] _{ \partial K } = ( \mathrm{D} w _1, w _2 ) _K - ( w _1 , \mathrm{D} w _2 ) _K = [ w _1, w _2 ] _{ \partial K } ,
  \end{equation*}
  where the last equality is \eqref{eq:ibp_bracket}. Finally,
  \begin{equation*}
    [ \widehat{ w } _1 - w _1 , \widehat{ w } _2 - w _2 ] _{ \partial K } = [ \widehat{ w } _1 , \widehat{ w } _2 ] _{ \partial K } - [ \widehat{ w } _1 , w _2 ] _{ \partial K } - [ w _1 , \widehat{ w } _2 ] _{ \partial K } + [ w _1 , w _2 ] _{ \partial K } ,
  \end{equation*}
  and since we have just shown that the last three terms cancel, the
  proof is complete.
\end{proof}

\begin{example}
  \label{ex:mscl_jump_HL}
  For the semilinear $k$-form Hodge--Laplace problem, variations have the form
  \begin{equation*}
    w _i = \tau _i \oplus v _i \oplus \eta _i , \qquad \widehat{ w } _i ^{\mathrm{nor}} = \widehat{ v } _i ^{\mathrm{nor}} \oplus \widehat{ \eta } _i ^{\mathrm{nor}} , \qquad \widehat{ w } _i ^{\mathrm{tan}} = \widehat{ \tau } _i ^{\mathrm{tan}} \oplus \widehat{ v } _i ^{\mathrm{tan}} ,
  \end{equation*}
  and \eqref{eq:weakvar_w} says that these satisfy
  \begin{subequations}
    \label{eq:weakvar_HL}
    \begin{alignat}{2}
      ( v _i , \mathrm{d} \tau _h ) _{ \mathcal{T} _h } - \langle \widehat{ v } _i ^{\mathrm{nor}} , \tau _h ^{\mathrm{tan}} \rangle _{ \partial \mathcal{T} _h } &= ( \tau _i , \tau _h ) _{ \mathcal{T} _h } , \quad & \forall \tau _h &\in W _h ^{ k -1 } ,\\
      ( \tau _i , \delta v _h ) _{ \mathcal{T} _h } + ( \eta _i ,
      \mathrm{d} v _h ) _{ \mathcal{T} _h } + \langle \widehat{ \tau }
      _i ^{\mathrm{tan}} , v _h ^{\mathrm{nor}} \rangle _{ \partial
        \mathcal{T} _h } - \langle \widehat{ \eta } _i ^{\mathrm{nor}} ,
      v _h ^{\mathrm{tan}} \rangle _{ \partial \mathcal{T} _h } &=
      \biggl( \frac{ \partial ^2 F }{ \partial u _h ^2 } v _i , v _h \biggr) _{ \mathcal{T} _h } , \quad &\forall v _h &\in W _h ^k ,\\
      ( v _i , \delta \eta _h ) _{ \mathcal{T} _h } + \langle \widehat{ v } _i ^{\mathrm{tan}} , \eta _h ^{\mathrm{nor}} \rangle _{ \partial \mathcal{T} _h } &= ( \eta _i , \eta _h ) _{ \mathcal{T} _h } , \quad &\forall \eta _h &\in W _h ^{ k + 1 } .
    \end{alignat}
  \end{subequations}
  The local multisymplecticity condition \eqref{eq:mscl_flux} becomes
  \begin{equation*}
    \langle \widehat{ \tau } _1 ^{\mathrm{tan}} , \widehat{ v } _2 ^{\mathrm{nor}} \rangle _{ \partial K } + \langle \widehat{ v } _1 ^{\mathrm{tan}} , \widehat{ \eta } _2 ^{\mathrm{nor}} \rangle _{ \partial K } = \langle \widehat{ \tau } _2 ^{\mathrm{tan}} , \widehat{ v } _1 ^{\mathrm{nor}} \rangle _{ \partial K } + \langle \widehat{ v } _2 ^{\mathrm{tan}} , \widehat{ \eta } _1 ^{\mathrm{nor}} \rangle _{ \partial K },
  \end{equation*}
  which is a version of the Dirichlet-to-Neumann symmetry
  \eqref{eq:mscl_HL} for approximate traces. Finally, since
  $ f ^\prime ( z _h ) = \partial ^2 H / \partial z _h ^2 =
  \mathrm{id} \oplus \partial ^2 F / \partial u _h ^2 \oplus
  \mathrm{id} $ is symmetric, \cref{lem:jump} implies that a necessary
  and sufficient condition for multisymplecticity is
  \begin{align*}
    &\hphantom{{}={}} \langle \widehat{ \tau } _1 ^{\mathrm{tan}} - \tau _1 ^{\mathrm{tan}} , \widehat{ v } _2 ^{\mathrm{nor}} - v _2 ^{\mathrm{nor}} \rangle _{ \partial K } + \langle \widehat{ v } _1 ^{\mathrm{tan}} - v _1 ^{\mathrm{tan}} , \widehat{ \eta } _2 ^{\mathrm{nor}} - \eta _2 ^{\mathrm{nor}} \rangle _{ \partial K } \\
    &= \langle \widehat{ \tau } _2 ^{\mathrm{tan}} - \tau _2 ^{\mathrm{tan}} , \widehat{ v } _1 ^{\mathrm{nor}} - v _1 ^{\mathrm{nor}} \rangle _{ \partial K } + \langle \widehat{ v } _2 ^{\mathrm{tan}} - v _2 ^{\mathrm{tan}} , \widehat{ \eta } _1 ^{\mathrm{nor}} - \eta _1 ^{\mathrm{nor}} \rangle _{ \partial K },
  \end{align*}
  which corresponds to \eqref{eq:mscl_jump}.
\end{example}

\Cref{lem:jump} shows that multisymplecticity of \eqref{eq:weakform}
is determined by the local flux function $\Phi$, which imposes the
relationship between $ w _i $ and $ \widehat{ w } _i $ through
\eqref{eq:weakvar_wnor}.

\begin{definition}
  The local flux function $\Phi$ is \emph{multisymplectic} if
  \eqref{eq:mscl_jump} holds for all $ ( w _1 , \widehat{ w } _1 ) $
  and $ ( w _2 , \widehat{ w } _2 ) $ satisfying
  \eqref{eq:weakvar_wnor}.
\end{definition}

We now prove multisymplecticity for two particular choices of
$\Phi$. The first is used for the hybridization of conforming FEEC
methods, as in \citet[Section 8.2.1]{AwFaGuSt2023}, while the second
is used for the mixed/nonconforming and HDG-type methods in
\citep[Sections 8.2.2--8.2.3]{AwFaGuSt2023}. To state the hypotheses
that the spaces must satisfy, it will be useful to denote
\begin{equation*}
  W _h ^{\mathrm{nor}} \coloneqq \{ w _h ^{\mathrm{nor}} : w _h \in W _h \}, \qquad   W _h ^{\mathrm{tan}} \coloneqq \{ w _h ^{\mathrm{tan}} : w _h \in W _h \} .
\end{equation*}
The following results generalize Theorems~1--2 in \citet{McSt2020}.

\begin{lemma}
  \label{lem:afw-h_flux}
  If
  $ W _h ^{\mathrm{nor}} \subset \widehat{ W } _h ^{\mathrm{nor}} + (
  W _h ^{\mathrm{tan}} + \widehat{ W } _h ^{\mathrm{tan}} ) ^\perp $,
  then
  $ \Phi ( z _h , \widehat{ z } _h ) = \widehat{ z } _h
  ^{\mathrm{tan}} - z _h ^{\mathrm{tan}} $ is multisymplectic.
\end{lemma}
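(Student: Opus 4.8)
The plan is to verify the defining condition of a multisymplectic flux function directly, element by element. Since all of $W_h$, $\widehat{W}_h^{\mathrm{nor}}$, and $\widehat{W}_h^{\mathrm{tan}}$ are products over $\mathcal{T}_h$, and since orthogonal complements respect such products, both the hypothesis and the constraint \eqref{eq:weakvar_wnor} decouple into per-element statements in the Hilbert space $L^2\Lambda(\partial K)$. Concretely, fixing $K \in \mathcal{T}_h$ and writing $\perp$ for the orthogonal complement in $L^2\Lambda(\partial K)$, the hypothesis restricts to $w_h^{\mathrm{nor}}\rvert_{\partial K} \in \widehat{W}_h^{\mathrm{nor}}(\partial K) + \bigl(\{w_h^{\mathrm{tan}}\rvert_{\partial K}\} + \widehat{W}_h^{\mathrm{tan}}(\partial K)\bigr)^\perp$ for all $w_h \in W_h$, and for $\Phi(w_i,\widehat{w}_i) = \widehat{w}_i^{\mathrm{tan}} - w_i^{\mathrm{tan}}$ the constraint \eqref{eq:weakvar_wnor} says precisely that $\widehat{w}_i^{\mathrm{tan}} - w_i^{\mathrm{tan}} \perp \widehat{W}_h^{\mathrm{nor}}(\partial K)$ on each $\partial K$, for $i = 1, 2$.

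With this in hand I would expand $[\widehat{w}_1 - w_1, \widehat{w}_2 - w_2]_{\partial K}$ using the definition of $[\cdot,\cdot]_{\partial K}$ as the antisymmetrization of $\langle (\widehat{w}_1 - w_1)^{\mathrm{tan}}, (\widehat{w}_2 - w_2)^{\mathrm{nor}} \rangle_{\partial K}$, and show that each of the two pairings vanishes. The key step is to use the (localized) hypothesis to split $w_2^{\mathrm{nor}}\rvert_{\partial K} = a + b$ with $a \in \widehat{W}_h^{\mathrm{nor}}(\partial K)$ and $b$ orthogonal to $\{w_h^{\mathrm{tan}}\rvert_{\partial K}\} + \widehat{W}_h^{\mathrm{tan}}(\partial K)$, so that $\widehat{w}_2^{\mathrm{nor}} - w_2^{\mathrm{nor}} = (\widehat{w}_2^{\mathrm{nor}} - a) - b$ on $\partial K$, with $\widehat{w}_2^{\mathrm{nor}} - a$ still in $\widehat{W}_h^{\mathrm{nor}}(\partial K)$. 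Since $\widehat{w}_1^{\mathrm{tan}} - w_1^{\mathrm{tan}}$ lies in $\{w_h^{\mathrm{tan}}\rvert_{\partial K}\} + \widehat{W}_h^{\mathrm{tan}}(\partial K)$ (because $\widehat{w}_1^{\mathrm{tan}} \in \widehat{V}_h^{\mathrm{tan}} \subset \widehat{W}_h^{\mathrm{tan}}$ and $w_1 \in W_h$), it annihilates $b$; and it annihilates $\widehat{w}_2^{\mathrm{nor}} - a$ by \eqref{eq:weakvar_wnor}. Hence $\langle \widehat{w}_1^{\mathrm{tan}} - w_1^{\mathrm{tan}}, \widehat{w}_2^{\mathrm{nor}} - w_2^{\mathrm{nor}} \rangle_{\partial K} = 0$, and the $1 \leftrightarrow 2$ symmetric argument gives $\langle \widehat{w}_2^{\mathrm{tan}} - w_2^{\mathrm{tan}}, \widehat{w}_1^{\mathrm{nor}} - w_1^{\mathrm{nor}} \rangle_{\partial K} = 0$; together these yield $[\widehat{w}_1 - w_1, \widehat{w}_2 - w_2]_{\partial K} = 0$, i.e.\ \eqref{eq:mscl_jump}, for every $K$, which is exactly the statement that $\Phi$ is multisymplectic.

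I do not anticipate a genuine obstacle: once the product structures are used to localize everything to each $\partial K$, the argument reduces to a two-line orthogonality computation. The only thing that needs care is bookkeeping — checking that the $L^2\Lambda(\partial K)$ pairings appearing in $\Phi$ and in $[\cdot,\cdot]_{\partial K}$ are between matching boundary degrees (which holds by the way the trace spaces are constructed, as the Hodge--Laplace example illustrates), and confirming that the orthogonal-complement and product operations genuinely commute, so that the global and per-element versions of the hypothesis and of \eqref{eq:weakvar_wnor} coincide. Note that \cref{lem:jump} is not used in proving this lemma: a multisymplectic $\Phi$ is defined purely through \eqref{eq:mscl_jump} under \eqref{eq:weakvar_wnor}, and it is \cref{lem:jump} that subsequently upgrades this to multisymplecticity of the full weak problem \eqref{eq:weakform}.
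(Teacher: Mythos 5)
Your proposal is correct and follows essentially the same route as the paper's proof: use \eqref{eq:weakvar_wnor} to get $\widehat{w}_i^{\mathrm{tan}} - w_i^{\mathrm{tan}} \perp \widehat{W}_h^{\mathrm{nor}}$ on each $\partial K$, decompose $\widehat{w}_2^{\mathrm{nor}} - w_2^{\mathrm{nor}}$ via the hypothesis into a piece in $\widehat{W}_h^{\mathrm{nor}}$ plus a piece in $(W_h^{\mathrm{tan}} + \widehat{W}_h^{\mathrm{tan}})^\perp$, and conclude that each of the two pairings in \eqref{eq:mscl_jump} vanishes separately. Your remark that \cref{lem:jump} is not needed here, but only to upgrade multisymplecticity of $\Phi$ to multisymplecticity of \eqref{eq:weakform}, is also accurate.
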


\begin{proof}
  Since $ ( w _i, \widehat{ w } _i ) $
  satisfies \eqref{eq:weakvar_wnor} for $ i = 1 , 2 $, we have
  \begin{equation*}
    \langle \widehat{ w } _i ^{\mathrm{tan}} - w _i ^{\mathrm{tan}} , \widehat{ w } _h ^{\mathrm{nor}} \rangle _{ \partial K } = 0 ,
  \end{equation*}
  for all
  $ \widehat{ w } _h ^{\mathrm{nor}} \in \widehat{ W } _h
  ^{\mathrm{nor}} $ and $ K \in \mathcal{T} _h $. By hypothesis,
  $ \widehat{ w } _2 ^{\mathrm{nor}} - w _2 ^{\mathrm{nor}} $ can be
  written as the sum of
  $ \widehat{ w } _h ^{\mathrm{nor}} \in \widehat{ W } _h
  ^{\mathrm{nor}} $ and an element of
  $ ( W _h ^{\mathrm{tan}} + \widehat{ W } _h ^{\mathrm{tan}} ) ^\perp
  $, both of which are orthogonal to
  $ \widehat{ w } _1 ^{\mathrm{tan}} - w _1 ^{\mathrm{tan}} $ on
  $ \partial K$. Thus,
  \begin{equation*}
    \langle \widehat{ w } _1 ^{\mathrm{tan}} - w _1 ^{\mathrm{tan}} , \widehat{ w } _2 ^{\mathrm{nor}} - w _2 ^{\mathrm{nor}} \rangle _{ \partial K } = 0 ,
  \end{equation*}
  and similarly,
  \begin{equation*}
    \langle \widehat{ w } _2 ^{\mathrm{tan}} - w _2 ^{\mathrm{tan}} , \widehat{ w } _1 ^{\mathrm{nor}} - w _1 ^{\mathrm{nor}} \rangle _{ \partial K } = 0 .
  \end{equation*}
  Subtracting these implies that \eqref{eq:mscl_jump} holds, and we
  conclude that $\Phi$ is multisymplectic.
\end{proof}

\begin{lemma}
  \label{lem:ldg-h_flux}
  If
  $ W _h ^{\mathrm{tan}} + \widehat{ W } _h ^{\mathrm{tan}} \subset
  \widehat{ W } _h ^{\mathrm{nor}} $, then for any bounded symmetric
  operator
  $ \alpha = \prod _{ K \in \mathcal{T} _h } \alpha _{\partial K} $ on
  $ L ^2 \Lambda ( \partial \mathcal{T} _h ) $, the local flux
  function
  $ \Phi ( z _h , \widehat{ z } _h ) = ( \widehat{ z } _h
  ^{\mathrm{nor}} - z _h ^{\mathrm{nor}} ) + \alpha ( \widehat{ z } _h
  ^{\mathrm{tan}} - z _h ^{\mathrm{tan}} ) $ is multisymplectic.
\end{lemma}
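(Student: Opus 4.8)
The plan is to verify the definition of a multisymplectic flux function directly: given $(w_1,\widehat w_1)$ and $(w_2,\widehat w_2)$ satisfying \eqref{eq:weakvar_wnor}, one must show that \eqref{eq:mscl_jump} holds on each $K\in\mathcal{T}_h$. Write $a_i\coloneqq\widehat w_i^{\mathrm{tan}}-w_i^{\mathrm{tan}}$ and $b_i\coloneqq\widehat w_i^{\mathrm{nor}}-w_i^{\mathrm{nor}}$. Because $\Phi$ is linear, \eqref{eq:weakvar_wnor} is just the relation $\langle b_i+\alpha a_i,\widehat w_h^{\mathrm{nor}}\rangle_{\partial\mathcal{T}_h}=0$ for all $\widehat w_h^{\mathrm{nor}}\in\widehat W_h^{\mathrm{nor}}$; and since $\Phi$, $\alpha$, and $\widehat W_h^{\mathrm{nor}}$ all factor as products over $\mathcal{T}_h$, this is equivalent to the local conditions that, on each $K$, $b_i+\alpha_{\partial K}a_i$ is orthogonal in $L^2\Lambda(\partial K)$ to the restriction $\widehat W_h^{\mathrm{nor}}(\partial K)$ of $\widehat W_h^{\mathrm{nor}}$ to $\partial K$. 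The hypothesis $W_h^{\mathrm{tan}}+\widehat W_h^{\mathrm{tan}}\subset\widehat W_h^{\mathrm{nor}}$ likewise localizes, and since $w_i|_K\in W_h(K)$ and $\widehat w_i^{\mathrm{tan}}$ restricts to an element of $\widehat W_h^{\mathrm{tan}}(\partial K)$, it gives $a_i\in\widehat W_h^{\mathrm{nor}}(\partial K)$.

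The crux --- and the only use of the hypothesis --- is that $a_1$ and $a_2$ are therefore themselves admissible test functions in the local orthogonality relation. Testing that relation for $i=1$ against $a_2$ and for $i=2$ against $a_1$ gives $\langle b_1,a_2\rangle_{\partial K}=-\langle\alpha_{\partial K}a_1,a_2\rangle_{\partial K}$ and $\langle b_2,a_1\rangle_{\partial K}=-\langle\alpha_{\partial K}a_2,a_1\rangle_{\partial K}$. Expanding the bracket from its definition, $[\widehat w_1-w_1,\widehat w_2-w_2]_{\partial K}=\langle a_1,b_2\rangle_{\partial K}-\langle a_2,b_1\rangle_{\partial K}$, and substituting these two identities yields
\begin{equation*}
  [\widehat w_1-w_1,\widehat w_2-w_2]_{\partial K}=\langle\alpha_{\partial K}a_1,a_2\rangle_{\partial K}-\langle\alpha_{\partial K}a_2,a_1\rangle_{\partial K}=0,
\end{equation*}
where the final equality uses that $\alpha_{\partial K}$ and the $L^2$ inner product on $\partial K$ are both symmetric. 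Hence \eqref{eq:mscl_jump} holds on every $K$, so $\Phi$ is multisymplectic.

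I do not expect a real obstacle here: this is the FEEC counterpart of the computation behind Theorem~2 of \citet{McSt2020}, and once the product/locality structure is unwound it is essentially a two-line manipulation. The one point requiring care is the bookkeeping of which trace the stabilization enters --- the term $\alpha(\widehat w_i^{\mathrm{tan}}-w_i^{\mathrm{tan}})$ lives in the \emph{normal}-trace equation \eqref{eq:weakvar_wnor}, so pairing it against the tangential quantities $a_1,a_2$ is precisely what produces the cross terms $\langle b_i,a_j\rangle_{\partial K}$ needed to reconstitute the bracket, after which self-adjointness of $\alpha_{\partial K}$ closes the argument uniformly in $\alpha$ (with the case $\alpha=0$ included).
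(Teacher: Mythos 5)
Your proof is correct and follows essentially the same route as the paper's: both arguments observe that the hypothesis $W_h^{\mathrm{tan}}+\widehat{W}_h^{\mathrm{tan}}\subset\widehat{W}_h^{\mathrm{nor}}$ lets one test the flux relation \eqref{eq:weakvar_wnor} for $i=1$ against $\widehat{w}_2^{\mathrm{tan}}-w_2^{\mathrm{tan}}$ and vice versa, then subtract and invoke the symmetry of $\alpha$ to obtain \eqref{eq:mscl_jump}. The only difference is notational (your $a_i$, $b_i$ shorthand and the explicit remark on localization to each $K$), which the paper leaves implicit.
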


\begin{proof}
  Since $ ( w _i, \widehat{ w } _i ) $
  satisfies \eqref{eq:weakvar_wnor} for $ i = 1 , 2 $, we have
  \begin{equation*}
    \bigl\langle ( \widehat{ w } _i ^{\mathrm{nor}} - w _i ^{\mathrm{nor}} ) + \alpha ( \widehat{ w } _i ^{\mathrm{tan}} - w _i ^{\mathrm{tan}} ) , \widehat{ w } _h ^{\mathrm{nor}} \bigr\rangle _{ \partial K } = 0 ,
  \end{equation*}
  for all
  $ \widehat{ w } _h ^{\mathrm{nor}} \in \widehat{ W } _h
  ^{\mathrm{nor}} $ and $ K \in \mathcal{T} _h $. By hypothesis, when $ i = 1 $ we
  may take
  $ \widehat{ w } _h ^{\mathrm{nor}} = \widehat{ w } _2
  ^{\mathrm{tan}} - w _2 ^{\mathrm{tan}} $ to get
  \begin{equation*}
    \bigl\langle ( \widehat{ w } _1 ^{\mathrm{nor}} - w _1 ^{\mathrm{nor}} ) + \alpha ( \widehat{ w } _1 ^{\mathrm{tan}} - w _1 ^{\mathrm{tan}} ) ,  \widehat{ w } _2 ^{\mathrm{tan}} - w _2 ^{\mathrm{tan}} \bigr\rangle _{ \partial K } = 0 ,
  \end{equation*}
  and similarly,
  \begin{equation*}
    \bigl\langle ( \widehat{ w } _2 ^{\mathrm{nor}} - w _2 ^{\mathrm{nor}} ) + \alpha ( \widehat{ w } _2 ^{\mathrm{tan}} - w _2 ^{\mathrm{tan}} ) ,  \widehat{ w } _1 ^{\mathrm{tan}} - w _1 ^{\mathrm{tan}} \bigr\rangle _{ \partial K } = 0 .
  \end{equation*}
  Finally, subtracting and using the symmetry of $\alpha$ implies
  \eqref{eq:mscl_jump}.
\end{proof}

\subsection{Global multisymplecticity criteria}
\label{sec:global_mscl}

The multisymplectic conservation law in the previous section holds on
individual elements $ K \in \mathcal{T} _h $, rather than arbitrary
subsets of $\Omega$ as in \cref{prop:mscl_bracket}. A stronger notion
of multisymplecticity is obtained by considering \emph{collections} of
elements $ \mathcal{K} \subset \mathcal{T} _h $, which cover a region
with boundary $ \partial ( \overline{ \bigcup \mathcal{K} } ) $.

\begin{definition}
  We say that \eqref{eq:weakform} is \emph{strongly multisymplectic}
  if, whenever $ ( z _h , \widehat{ z } _h ) $ is a solution with
  $ f ^\prime ( z _h ) $ being symmetric, and
  $ ( w _1 , \widehat{ w } _1 ) $ and $ ( w _2 , \widehat{ w } _2 ) $
  are first variations satisfying \eqref{eq:weakvar}, we have
  \begin{equation}
    \label{eq:mscl_strong}
    [ \widehat{ w } _1 , \widehat{ w } _2 ] _{ \partial (\overline{ \bigcup \mathcal{K} } ) } = 0 ,
  \end{equation}
  for any collection of elements
  $ \mathcal{K} \subset \mathcal{T} _h $.
\end{definition}

Clearly, strong multisymplecticity implies multisymplecticity in the
sense of \cref{def:local_multisymplecticity}, since we may take
$ \mathcal{K} = \{ K \} $ consisting of a single element. Conversely,
if \eqref{eq:weakform} is multisymplectic in the sense of
\cref{def:local_multisymplecticity}, summing over
$ K \in \mathcal{K} $ gives
\begin{equation}
  \label{eq:mscl_weak}
  [ \widehat{ w } _1 , \widehat{ w } _2 ] _{ \partial \mathcal{K} } \coloneqq \sum _{ K \in \mathcal{K} } [ \widehat{ w } _1 , \widehat{ w } _2 ] _{ \partial K } = 0 .
\end{equation}
However, this is weaker than \eqref{eq:mscl_strong}, since it
generally includes contributions from facets in the interior of the
region covered by $\mathcal{K}$, not just those on the boundary. To
deduce strong multisymplecticity, we will need an additional
assumption under which the interior contributions cancel.

While all solutions to \eqref{eq:weakform} satisfy the weak
conservativity condition \eqref{eq:jump.conservativity}, in certain
cases a stronger form of conservativity holds. The terminology of weak
and strong conservativity is again adapted from \citet{CoGoLa2009}; see
also \citet{ArBrCoMa2001}.

\begin{definition}
  The local flux function $\Phi$ is \emph{strongly conservative} if
  \eqref{eq:weakform_wnor}--\eqref{eq:weakform_wtan} imply
  $ \llbracket \widehat{ z } _h ^{\mathrm{nor}} \rrbracket = 0 $.
\end{definition}

The following simple result gives a sufficient condition for strong
conservativity.

\begin{lemma}
  \label{lemma:strong.conservativity}
  If \eqref{eq:weakform_wnor} implies
  $ \llbracket \widehat{ z } _h ^{\mathrm{nor}} \rrbracket \in
  \ringhat{V} _h ^{\mathrm{tan}} $, then $\Phi$ is
  strongly conservative.
\end{lemma}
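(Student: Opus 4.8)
The plan is to combine the hypothesis with the equivalent form \eqref{eq:jump.conservativity} of the conservativity condition and test against the normal jump itself. Suppose $ ( z _h , \widehat{ z } _h ) $ satisfies \eqref{eq:weakform_wnor}--\eqref{eq:weakform_wtan}. By the standing hypothesis of the lemma, \eqref{eq:weakform_wnor} forces $ \jump{ \widehat{ z } _h ^{\mathrm{nor}} } \in \ringhat{V} _h ^{\mathrm{tan}} $, so $ \jump{ \widehat{ z } _h ^{\mathrm{nor}} } $ is itself an admissible test function in the single-valuedness condition.

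Next, I would invoke \cref{prop:average-jump}, by which \eqref{eq:weakform_wtan} is equivalent to \eqref{eq:jump.conservativity}, i.e., $ \bigl\langle \jump{ \widehat{ z } _h ^{\mathrm{nor}} } , \widehat{ w } _h ^{\mathrm{tan}} \bigr\rangle _{ \partial \mathcal{T} _h } = 0 $ for all $ \widehat{ w } _h ^{\mathrm{tan}} \in \ringhat{V} _h ^{\mathrm{tan}} $. Choosing the particular test function $ \widehat{ w } _h ^{\mathrm{tan}} = \jump{ \widehat{ z } _h ^{\mathrm{nor}} } $, which is legitimate by the previous paragraph, gives $ \bigl\langle \jump{ \widehat{ z } _h ^{\mathrm{nor}} } , \jump{ \widehat{ z } _h ^{\mathrm{nor}} } \bigr\rangle _{ \partial \mathcal{T} _h } = 0 $. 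Since $ \langle \cdot , \cdot \rangle _{ \partial \mathcal{T} _h } $ is the $ L ^2 $ inner product on $ L ^2 \Lambda ( \partial \mathcal{T} _h ) $, this forces $ \jump{ \widehat{ z } _h ^{\mathrm{nor}} } = 0 $, which is precisely the defining property of strong conservativity.

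There is essentially no obstacle here: the argument is a one-line ``test against yourself'' positivity trick. The only point requiring a little care is that the conservativity condition \eqref{eq:weakform_wtan} is phrased in terms of $ \widehat{ z } _h ^{\mathrm{nor}} $ rather than its jump, so one must first pass to the equivalent formulation \eqref{eq:jump.conservativity} supplied by \cref{prop:average-jump} before the test function $ \jump{ \widehat{ z } _h ^{\mathrm{nor}} } $ can be inserted; and one must verify that this test function lies in $ \ringhat{V} _h ^{\mathrm{tan}} $, which is exactly what the hypothesis of the lemma guarantees.
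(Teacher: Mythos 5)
Your proof is correct and is essentially identical to the paper's: both use the hypothesis to admit $\jump{\widehat{z}_h^{\mathrm{nor}}}$ as a test function in \eqref{eq:jump.conservativity} (the jump form of \eqref{eq:weakform_wtan} supplied by \cref{prop:average-jump}) and conclude by positivity of the $L^2$ inner product. Your write-up just spells out the intermediate steps that the paper compresses into one line.
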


\begin{proof}
  If
  $ \llbracket \widehat{ z } _h ^{\mathrm{nor}} \rrbracket \in
  \ringhat{V} _h ^{\mathrm{tan}} $, then taking
  $ \widehat{ w } _h ^{\mathrm{tan}} = \llbracket \widehat{ z } _h
  ^{\mathrm{nor}} \rrbracket $ in \eqref{eq:jump.conservativity}
  immediately implies
  $ \llbracket \widehat{ z } _h ^{\mathrm{nor}} \rrbracket = 0 $.
\end{proof}

The following result generalizes Theorem 3 in \citet{McSt2020}.

\begin{theorem}
  \label{thm:strong.MS}
  If the local flux function $\Phi$ is strongly conservative and
  multisymplectic, then \eqref{eq:weakform} is strongly
  multisymplectic.
\end{theorem}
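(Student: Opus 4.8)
The plan is to deduce strong multisymplecticity from the local multisymplecticity already guaranteed by \cref{lem:jump}, combined with a cancellation of interior-facet contributions that is supplied by strong conservativity. Accordingly, fix a solution $(z_h,\widehat z_h)$ with $f^\prime(z_h)$ symmetric, fix first variations $(w_1,\widehat w_1)$ and $(w_2,\widehat w_2)$ satisfying \eqref{eq:weakvar}, and fix a collection $\mathcal{K}\subset\mathcal{T}_h$; write $\Omega_{\mathcal{K}}\coloneqq\overline{\bigcup\mathcal{K}}$.

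First, since $\Phi$ is multisymplectic, \eqref{eq:mscl_jump} holds for these variations, so \cref{lem:jump} (using symmetry of $f^\prime(z_h)$) gives the local conservation law $[\widehat w_1,\widehat w_2]_{\partial K}=0$ for each $K\in\mathcal{T}_h$; summing over $K\in\mathcal{K}$ yields $[\widehat w_1,\widehat w_2]_{\partial\mathcal{K}}=0$. It then suffices to establish the identity
\begin{equation*}
  [\widehat w_1,\widehat w_2]_{\partial\mathcal{K}}=[\widehat w_1,\widehat w_2]_{\partial\Omega_{\mathcal{K}}}.
\end{equation*}
By definition the left-hand side is $\sum_{K\in\mathcal{K}}[\widehat w_1,\widehat w_2]_{\partial K}$. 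A facet occurring in this sum either is shared by two elements $K^{+},K^{-}\in\mathcal{K}$, in which case it lies in the interior of $\Omega_{\mathcal{K}}$ and contributes one term from each of $\partial K^{+}$ and $\partial K^{-}$; or it is a facet of a single element of $\mathcal{K}$, in which case it lies on $\partial\Omega_{\mathcal{K}}$ and contributes identically to both sides of the claimed identity. Hence the identity is equivalent to the statement that, for every interior facet $e=\partial K^{+}\cap\partial K^{-}$, the contribution of $e$ as part of $\partial K^{+}$ and its contribution as part of $\partial K^{-}$ sum to zero.

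For this last cancellation I would invoke strong conservativity. The variational equations \eqref{eq:weakvar} have the same form as \eqref{eq:weakform} with the same bounded linear flux function $\Phi$, so strong conservativity applied to \eqref{eq:weakvar_wnor}--\eqref{eq:weakvar_wtan} forces $\jump{\widehat w_i^{\mathrm{nor}}}=0$ for $i=1,2$; moreover $\widehat w_i^{\mathrm{tan}}\in\widehat V_h^{\mathrm{tan}}$ gives $\jump{\widehat w_i^{\mathrm{tan}}}=0$ on $\partial\mathcal{T}_h\setminus\partial\Omega$, hence across every interior facet of $\mathcal{K}$. Unwinding \cref{def:jump_avg}, this means that across such a facet $\widehat w_i^{\mathrm{nor}}$ reverses sign while $\widehat w_i^{\mathrm{tan}}$ stays the same, whereas the $L^2$ pairing $\langle\cdot,\cdot\rangle$ over the facet is the same computed from either side. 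Substituting these relations into the two facet contributions, each of the form $\langle\widehat w_1^{\mathrm{tan}},\widehat w_2^{\mathrm{nor}}\rangle-\langle\widehat w_2^{\mathrm{tan}},\widehat w_1^{\mathrm{nor}}\rangle$ integrated over $e$, shows they are negatives of one another, so they cancel. Together with the previous paragraph this yields $[\widehat w_1,\widehat w_2]_{\partial\Omega_{\mathcal{K}}}=0$, which is \eqref{eq:mscl_strong}.

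The point needing the most care is the orientation bookkeeping just described: for $e=\partial K^{+}\cap\partial K^{-}$ the two induced orientations of $e$ are opposite, so the two induced boundary Hodge stars $\widehat{\star}$ differ by a sign. This is exactly why the normal trace genuinely flips across $e$ (matching the convention already built into the normal jump of \cref{def:jump_avg}) while the tangential trace does not, and why the facet inner product is nevertheless independent of the side from which it is computed. One can sidestep explicit orientations by instead restricting the average--jump decomposition of \cref{prop:average-jump} to the subcollection $\mathcal{K}$; either way, the content is that a single-valued normal trace pairs to zero against a single-valued tangential trace on any interior facet.
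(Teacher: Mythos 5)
Your proposal is correct and follows essentially the same route as the paper's proof: invoke \cref{lem:jump} to get the local conservation law and sum over $\mathcal{K}$, then use strong conservativity (applied to the variational system) together with single-valuedness of $\widehat{w}_i^{\mathrm{tan}}$ to show that the sign-flipping normal traces and non-flipping tangential traces make the interior-facet contributions cancel in pairs. Your explicit justification that strong conservativity transfers to the linearized equations \eqref{eq:weakvar_wnor}--\eqref{eq:weakvar_wtan} is a point the paper leaves implicit, but it is the same argument.
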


\begin{proof}
  Suppose $ ( z _h , \widehat{ z } _h ) $ is a solution to
  \eqref{eq:weakform} with $ f ^\prime ( z _h ) $ being symmetric, and
  let $ ( w _1 , \widehat{ w } _1 ) $ and
  $ ( w _2 , \widehat{ w } _2 ) $ be first variations satisfying
  \eqref{eq:weakvar}. \cref{lem:jump} implies that the weak
  multisymplecticity condition \eqref{eq:mscl_weak} holds on
  $ \mathcal{K} \subset \mathcal{T} _h $. Therefore, to establish
  \eqref{eq:mscl_strong}, it suffices to show that the contributions
  to \eqref{eq:mscl_weak} from interior facets
  $ e = \partial K ^+ \cap \partial K ^- $ with
  $ K ^\pm \in \mathcal{K} $ cancel.

  Since $\Phi$ is strongly conservative,
  \eqref{eq:weakvar_wnor}--\eqref{eq:weakvar_wtan} imply
  $ \llbracket \widehat{ w } _i ^{\mathrm{nor}} \rrbracket _{ e ^\pm }
  = 0 $, i.e.,
  $ \widehat{ w } _i ^{\mathrm{nor}} \rvert _{ e ^+ } = - \widehat{ w
  } _i ^{\mathrm{nor}} \rvert _{ e ^- } $. On the other hand, since
  $ \widehat{ w } _i ^{\mathrm{tan}} \in \widehat{ V } _h
  ^{\mathrm{tan}} $, we have
  $ \llbracket \widehat{ w } _i ^{\mathrm{tan}} \rrbracket _{ e^\pm }
  = 0 $, i.e.,
  $ \widehat{ w } _i ^{\mathrm{tan}} \rvert _{ e ^+ } = \widehat{ w }
  _i ^{\mathrm{tan}} \rvert _{ e ^- } $. In other words, the
  single-valued normal trace $ \widehat{ w } _i ^{\mathrm{nor}} $
  flips sign across $ e ^\pm $, while the single-valued tangential
  trace $ \widehat{ w } _i ^{\mathrm{tan}} $ does not. Therefore,
  \begin{align*}
    \langle \widehat{ w } _1 ^{\mathrm{tan}} , \widehat{ w } _2 ^{\mathrm{nor}} \rangle _{ e ^+ } + \langle \widehat{ w } _1 ^{\mathrm{tan}} , \widehat{ w } _2 ^{\mathrm{nor}} \rangle _{ e ^- } &= 0 ,\\
    \langle \widehat{ w } _2 ^{\mathrm{tan}} , \widehat{ w } _1 ^{\mathrm{nor}} \rangle _{ e ^+ } + \langle \widehat{ w } _2 ^{\mathrm{tan}} , \widehat{ w } _1 ^{\mathrm{nor}} \rangle _{ e ^- } &= 0 ,
  \end{align*}
  and subtracting these equations gives
  \begin{equation*}
    [ \widehat{ w } _1 , \widehat{ w } _2 ] _{ e ^+ } +     [ \widehat{ w } _1 , \widehat{ w } _2 ] _{ e ^- } = 0 .
  \end{equation*} 
  Thus, the interior terms of \eqref{eq:mscl_weak} cancel, and we
  conclude that
  $ [ \widehat{ w } _1 , \widehat{ w } _2 ] _{ \partial ( \overline{
      \bigcup \mathcal{K} } ) } = [ \widehat{ w } _1 , \widehat{ w }
  _2 ] _{ \partial \mathcal{K} } = 0 $.
\end{proof}

\section{Multisymplecticity of particular FEEC methods}
\label{sec:particular_methods}
We now use the framework of \cref{sec:multisymplectic_hybrid} to
analyze the weak and strong multisymplecticity of certain methods,
including conforming, nonconforming, and HDG-type methods based on
FEEC. Particular attention will be devoted to the semilinear
Hodge--Laplace problem.

\subsection{The AFW-H method}

We first consider the hybridization of conforming FEEC, where the
solution $ z _h $ lives in a finite element subcomplex of
$ H \Lambda (\Omega) $; we refer to this as the \emph{AFW-H method},
after \citet*{ArFaWi2006,ArFaWi2010}. For the $k$-form Hodge--Laplace
problem, this includes the conforming methods of \citet{AwFaGuSt2023},
which hybridize the methods of \citep{ArFaWi2006,ArFaWi2010}. In
particular, when $ k = 0 $, this includes the hybridized continuous
Galerkin (CG-H) method, and when $ k = n $, it includes the hybridized
Raviart--Thomas (RT-H) and Brezzi--Douglas--Marini (BDM-H) methods, in
the terminology of \citet{CoGoLa2009}. Multisymplecticity of CG-H,
RT-H, and BDM-H was examined in \citet{McSt2020}; here we extend the
treatment to the general $k$-form case.

The AFW-H method is specified as follows.  As in
\citet{AwFaGuSt2023}, let $ W _h (K) $ be a finite element subcomplex
of $ H \Lambda (K) $ for each $ K \in \mathcal{T} _h $. For example,
on a simplicial triangulation, the segment
$ W _h ^{k-1} (K) \xrightarrow{ \mathrm{d} } W _h ^k (K) $ could be any of the
choices
\begin{equation}
  \label{eq:stable_pairs}
  W _h ^{ k -1 } (K) = \begin{Bmatrix}
    \renewcommand\arraystretch{2} \mathcal{P} _{r+1} \Lambda ^{k-1} ( K ) \\[0.5ex]
    \text{or}\\[0.5ex]
    \mathcal{P} _{ r + 1 } ^- \Lambda ^{k-1} ( K )
  \end{Bmatrix},
  \qquad W _h ^k =
  \begin{Bmatrix}
    \renewcommand\arraystretch{2} \mathcal{P} _r \Lambda ^k (
    K ) \text{ (if
      $ r \geq 1 $)} \\[0.5ex]
    \text{or}\\[0.5ex]
    \mathcal{P} _{ r + 1 } ^- \Lambda ^k ( K )
  \end{Bmatrix},
\end{equation}
which \citet{ArFaWi2006,ArFaWi2010} showed to be stable for the
$k$-form Hodge--Laplace problem. Again following \citep{AwFaGuSt2023},
we take the approximate trace spaces to be
\begin{equation*}
  \widehat{ W } _h ^{\mathrm{nor}} ( \partial K ) = \widehat{ W } _h ^{\mathrm{tan}} ( \partial K ) = W _h ^{\mathrm{tan}} ( \partial K ) ,
\end{equation*}
and the local flux function to be
\begin{equation*}
  \Phi ( z _h , \widehat{ z } _h ) = \widehat{ z } _h ^{\mathrm{tan}} - z _h ^{\mathrm{tan}} .
\end{equation*}
It follows that we have conforming subcomplexes $ \mathring{ V } _h \subset V _h \subset H \Lambda (\Omega) $ given by
\begin{equation*}
  \mathring{ V } _h \coloneqq \{ w _h \in W _h : \llbracket w _h ^{\mathrm{tan}} \rrbracket = 0 \} , \qquad V _h \coloneqq \bigl\{ w _h \in W _h : \llbracket w _h ^{\mathrm{tan}} \rrbracket = 0 \text{ on } \partial \mathcal{T} _h \setminus \partial \Omega \bigr\} ,
\end{equation*}
so by the definitions in \cref{sec:hybrid} we get
$ \ringhat{V} _h ^{\mathrm{tan}} = \mathring{ V }
_h ^{\mathrm{tan}} $ and
$ \widehat{ V } _h ^{\mathrm{tan}} = V _h ^{\mathrm{tan}} $.

We now prove that, with this choice of spaces and flux,
\eqref{eq:weakform} is equivalent to a conforming method for
\eqref{eq:canonical}. Compare Theorem~4.2 in \citet{AwFaGuSt2023} for
the Hodge--Laplace problem.

\begin{theorem}
  \label{thm:afw-h_conforming}
  With the AFW-H spaces and flux defined above,
  $ ( z _h , \widehat{ z } _h ) $ satisfies \eqref{eq:weakform} if and
  only if $ z _h \in V _h $ satisfies
  \begin{equation}
    \label{eq:conforming_v}
    ( \mathrm{d} z _h , w _h ) _\Omega + ( z _h , \mathrm{d} w _h ) _\Omega = \bigl( f ( z _h ) , w _h \bigr) _\Omega , \quad \forall w _h \in \mathring{ V } _h ,
  \end{equation}
  where $ \widehat{ z } _h ^{\mathrm{tan}} = z _h ^{\mathrm{tan}} $
  and
  $ \widehat{ z } _h ^{\mathrm{nor}} \in \widehat{ W } _h
  ^{\mathrm{nor}} $ is the unique solution of
  \begin{equation}
    \label{eq:conforming_w}
    \langle \widehat{ z } _h ^{\mathrm{nor}} , w _h ^{\mathrm{tan}} \rangle _{ \partial \mathcal{T} _h } = ( \mathrm{d} z _h , w _h ) _{ \mathcal{T} _h } + ( z _h , \mathrm{d} w _h ) _{ \mathcal{T} _h } - \bigl( f ( z _h ) , w _h \bigr) _{ \mathcal{T} _h } , \quad \forall w _h \in W _h .
  \end{equation}
\end{theorem}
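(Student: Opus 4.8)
The plan is to prove the equivalence by the usual hybridization argument: the broken test functions $w_h\in W_h$ that are not tangentially conforming supply exactly enough equations to determine the numerical normal trace $\widehat{z}_h^{\mathrm{nor}}$ through \eqref{eq:conforming_w}, while the tangentially conforming test functions $w_h\in\mathring{V}_h$ reproduce \eqref{eq:conforming_v}. I would record one computation at the outset: applying the integration-by-parts formula \eqref{eq:ibp} on each $K\in\mathcal{T}_h$ with the two arguments taken to be $z_h$ and $w_h$, and summing over the mesh, gives
\[
  ( z_h , \mathrm{D} w_h )_{\mathcal{T}_h} = ( \mathrm{d} z_h , w_h )_{\mathcal{T}_h} + ( z_h , \mathrm{d} w_h )_{\mathcal{T}_h} - \langle z_h^{\mathrm{tan}} , w_h^{\mathrm{nor}} \rangle_{\partial\mathcal{T}_h} , \qquad \forall\, w_h \in W_h .
\]
Write $a(z_h,w_h)$ for the first two terms on the right-hand side; this is the symmetric bilinear form appearing in \eqref{eq:conforming_v} and \eqref{eq:conforming_w}, and because $V_h,\mathring{V}_h\subset H\Lambda(\Omega)$ (as recorded just above the theorem), both $a$ and the broken inner products agree with their global counterparts on $\Omega$ whenever the arguments are tangentially conforming.

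For the forward implication, suppose $(z_h,\widehat{z}_h)$ solves \eqref{eq:weakform}. Since $\widehat{z}_h^{\mathrm{tan}}\in\widehat{V}_h^{\mathrm{tan}}=V_h^{\mathrm{tan}}\subset W_h^{\mathrm{tan}}=\widehat{W}_h^{\mathrm{nor}}$ and $z_h^{\mathrm{tan}}\in W_h^{\mathrm{tan}}$, the difference $\widehat{z}_h^{\mathrm{tan}}-z_h^{\mathrm{tan}}$ is an admissible test function in \eqref{eq:weakform_wnor}; with $\Phi(z_h,\widehat{z}_h)=\widehat{z}_h^{\mathrm{tan}}-z_h^{\mathrm{tan}}$ this forces $\widehat{z}_h^{\mathrm{tan}}=z_h^{\mathrm{tan}}$, and since $\widehat{z}_h^{\mathrm{tan}}\in V_h^{\mathrm{tan}}$ is single-valued on interior facets, so is $z_h^{\mathrm{tan}}$, i.e.\ $z_h\in V_h$. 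Substituting the integration-by-parts identity into \eqref{eq:weakform_w} and using $\widehat{z}_h^{\mathrm{tan}}=z_h^{\mathrm{tan}}$ to cancel the two $\langle z_h^{\mathrm{tan}},w_h^{\mathrm{nor}}\rangle_{\partial\mathcal{T}_h}$ contributions, \eqref{eq:weakform_w} collapses to $\langle\widehat{z}_h^{\mathrm{nor}},w_h^{\mathrm{tan}}\rangle_{\partial\mathcal{T}_h}=a(z_h,w_h)-(f(z_h),w_h)_{\mathcal{T}_h}$ for all $w_h\in W_h$, which is precisely \eqref{eq:conforming_w}; this determines $\widehat{z}_h^{\mathrm{nor}}$ uniquely in $\widehat{W}_h^{\mathrm{nor}}=W_h^{\mathrm{tan}}$ because $\langle\cdot,\cdot\rangle_{\partial\mathcal{T}_h}$ restricts to an inner product there. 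Finally, restricting the last identity to $w_h\in\mathring{V}_h$, its left-hand side vanishes by \eqref{eq:weakform_wtan} (since then $w_h^{\mathrm{tan}}\in\mathring{V}_h^{\mathrm{tan}}=\ringhat{V}_h^{\mathrm{tan}}$), and, using that $a$ collapses to the form on $\Omega$, we recover \eqref{eq:conforming_v}.

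For the converse, given $z_h\in V_h$ satisfying \eqref{eq:conforming_v}, I would first verify that \eqref{eq:conforming_w} admits a unique solution $\widehat{z}_h^{\mathrm{nor}}\in\widehat{W}_h^{\mathrm{nor}}$. Its right-hand side $G(w_h):=a(z_h,w_h)-(f(z_h),w_h)_{\mathcal{T}_h}$ vanishes whenever $w_h^{\mathrm{tan}}=0$: such $w_h$ lie in $\mathring{V}_h\subset H\Lambda(\Omega)$, so $G(w_h)=(\mathrm{d}z_h,w_h)_\Omega+(z_h,\mathrm{d}w_h)_\Omega-(f(z_h),w_h)_\Omega=0$ by \eqref{eq:conforming_v}. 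Hence $G$ factors through $w_h\mapsto w_h^{\mathrm{tan}}$ as a linear functional on the finite-dimensional space $W_h^{\mathrm{tan}}=\widehat{W}_h^{\mathrm{nor}}$ and has a unique Riesz representative $\widehat{z}_h^{\mathrm{nor}}$ there. Setting $\widehat{z}_h^{\mathrm{tan}}:=z_h^{\mathrm{tan}}$: equation \eqref{eq:weakform_wnor} is immediate since $\Phi=0$; \eqref{eq:weakform_wtan} holds because for $\widehat{w}_h^{\mathrm{tan}}=w_h^{\mathrm{tan}}$ with $w_h\in\mathring{V}_h$ the defining relation \eqref{eq:conforming_w} gives $\langle\widehat{z}_h^{\mathrm{nor}},w_h^{\mathrm{tan}}\rangle_{\partial\mathcal{T}_h}=G(w_h)=0$; and \eqref{eq:weakform_w} follows by running the integration-by-parts identity in reverse and invoking \eqref{eq:conforming_w}, exactly undoing the forward computation.

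This is a routine hybridization equivalence, and the only point I expect to require a moment's care is the well-definedness step in the converse---that the right-hand side of \eqref{eq:conforming_w} depends on $w_h$ only through $w_h^{\mathrm{tan}}$---which is exactly where \eqref{eq:conforming_v} enters and which expresses the fact that the conforming equation is the compatibility condition for solvability of the hybrid system. Everything else reduces to bookkeeping with the trace bracket $[\cdot,\cdot]_{\partial\mathcal{T}_h}$ and the identifications $\ringhat{V}_h^{\mathrm{tan}}=\mathring{V}_h^{\mathrm{tan}}$, $\widehat{V}_h^{\mathrm{tan}}=V_h^{\mathrm{tan}}$, and $V_h,\mathring{V}_h\subset H\Lambda(\Omega)$ already established above.
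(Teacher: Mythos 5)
Your proposal is correct and follows essentially the same route as the paper's proof: testing \eqref{eq:weakform_wnor} with $\widehat{z}_h^{\mathrm{tan}}-z_h^{\mathrm{tan}}$ to force tangential conformity, substituting the element-wise integration-by-parts identity into \eqref{eq:weakform_w}, and in the converse using the vanishing of the right-hand side on tangential bubbles (your "$w_h^{\mathrm{tan}}=0$ implies $w_h\in\mathring{V}_h$" step is exactly the paper's $\mathring{W}_h\subset\mathring{V}_h$ observation) together with Riesz representation on $W_h^{\mathrm{tan}}$. No gaps.
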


\begin{proof}
  Suppose first that $ ( z _h , \widehat{ z } _h ) $ satisfies
  \eqref{eq:weakform}. Since
  $ \widehat{ W } _h ^{\mathrm{nor}} = \widehat{ W } _h
  ^{\mathrm{tan}} = W _h ^{\mathrm{tan}} $, we may take the test
  function
  $ \widehat{ w } _h ^{\mathrm{nor}} = \widehat{ z } _h
  ^{\mathrm{tan}} - z _h ^{\mathrm{tan}} $ in
  \eqref{eq:weakform_wnor}, which gives
  $ \widehat{ z } _h ^{\mathrm{tan}} = z _h ^{\mathrm{tan}}
  $. Therefore, $ z _h ^{\mathrm{tan}} $ is single-valued, which
  implies $ z _h \in V _h $. Next, recall the integration-by-parts
  identity
  \begin{equation*}
    ( \mathrm{d} z _h , w _h ) _{ \mathcal{T} _h } - ( z _h , \delta w _h ) _{ \mathcal{T} _h } = \langle z _h ^{\mathrm{tan}} , w _h ^{\mathrm{nor}} \rangle _{ \partial \mathcal{T} _h } .
  \end{equation*}
  Together with the fact that
  $ \widehat{ z } _h ^{\mathrm{tan}} = z _h ^{\mathrm{tan}} $, we may
  use this to rewrite \eqref{eq:weakform_w} as
  \begin{equation}
    \label{eq:conforming_w_rearranged}
    ( \mathrm{d} z _h , w _h ) _{ \mathcal{T} _h } +  ( z _h , \mathrm{d} w _h ) _{ \mathcal{T} _h } - \langle \widehat{ z } _h ^{\mathrm{nor}} , w _h ^{\mathrm{tan}} \rangle _{ \partial \mathcal{T} _h } = \bigl( f ( z _h ) , w _h \bigr) _{ \mathcal{T} _h } , \quad \forall w _h \in W _h .
  \end{equation}
  In particular, if $ w _h \in \mathring{ V } _h $, then
  \eqref{eq:weakform_wtan} implies that the trace terms cancel, giving
  \eqref{eq:conforming_v}. Rearranging
  \eqref{eq:conforming_w_rearranged} immediately gives
  \eqref{eq:conforming_w}; uniqueness of
  $ \widehat{ z } _h ^{\mathrm{nor}} $ follows from the
  existence-and-uniqueness argument given below for the converse
  direction.

  Conversely, suppose that $ z _h \in V _h $ satisfies
  \eqref{eq:conforming_v}. Letting
  $ \widehat{ z } _h ^{\mathrm{tan}} = z _h ^{\mathrm{tan}} $, we
  immediately get \eqref{eq:weakform_wnor}. To show existence and
  uniqueness of $ \widehat{ z } _h ^{\mathrm{nor}} $, we define the
  space of \emph{tangential bubbles},
  \begin{equation*}
    \mathring{ W } _h \coloneqq \{ w _h \in W _h : w _h ^{\mathrm{tan}} = 0 \} \subset \mathring{ V } _h .
  \end{equation*}
  If $ w _h \in \mathring{ W } _h \subset \mathring{ V } _h $, then
  the right-hand side of \eqref{eq:conforming_w} vanishes by
  \eqref{eq:conforming_v}. Hence, the right-hand side of
  \eqref{eq:conforming_w} is a well-defined functional in
  $ ( W _h ^{\mathrm{tan}} ) ^\ast $, because any two test functions
  with the same tangential trace differ by an element of
  $ \mathring{ W } _h $. Since
  $ \langle \cdot , \cdot \rangle _{ \partial \mathcal{T} _h } $ is an
  inner product on $ W _h ^{\mathrm{tan}} $, existence and uniqueness
  of $ \widehat{ z } _h ^{\mathrm{nor}} $ follows by the Riesz
  representation theorem. Rearranging \eqref{eq:conforming_w} to
  \eqref{eq:conforming_w_rearranged} and integrating by parts as
  above, but in the reverse order, gives
  \eqref{eq:weakform_w}. Finally, taking
  $ w _h \in \mathring{ V } _h $ and subtracting
  \eqref{eq:conforming_w_rearranged} from \eqref{eq:conforming_v}
  gives \eqref{eq:weakform_wtan}, which completes the proof.
\end{proof}

\begin{remark}
  For the linear Hodge--Dirac problem, \eqref{eq:conforming_v}
  coincides with the conforming method of \citet{LeSt2016} (modulo
  boundary conditions and harmonic forms; see
  \cref{rmk:non-uniqueness}).
\end{remark}

We show now that the AFW-H method is locally (i.e., weakly)
multisymplectic. The following result generalizes Theorem~8 from
\citep{McSt2020} on the multisymplecticity of the CG-H method.

\begin{theorem}
  \label{thm:MS.FEEC}
  The AFW-H method is multisymplectic. It is strongly multisymplectic
  if and only if $ n = 1 $ (in which case multisymplecticity is just
  symplecticity).
\end{theorem}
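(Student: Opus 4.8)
The plan is to treat the three claims in turn, leaning on the lemmas already established. That the AFW-H method is multisymplectic is immediate from \cref{lem:afw-h_flux}: with the AFW-H choices $ \widehat{ W } _h ^{\mathrm{nor}} = \widehat{ W } _h ^{\mathrm{tan}} = W _h ^{\mathrm{tan}} $, the space $ \widehat{ W } _h ^{\mathrm{nor}} + ( W _h ^{\mathrm{tan}} + \widehat{ W } _h ^{\mathrm{tan}} ) ^\perp $ equals $ W _h ^{\mathrm{tan}} + ( W _h ^{\mathrm{tan}} ) ^\perp = L ^2 \Lambda ( \partial \mathcal{T} _h ) $, so the hypothesis of that lemma holds trivially; hence the AFW-H flux $ \Phi ( z _h , \widehat{ z } _h ) = \widehat{ z } _h ^{\mathrm{tan}} - z _h ^{\mathrm{tan}} $ is multisymplectic, and multisymplecticity of \eqref{eq:weakform} follows via \cref{lem:jump} and \cref{def:local_multisymplecticity}.

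For the ``if'' half of the equivalence, assume $ n = 1 $. I would deduce strong multisymplecticity from \cref{thm:strong.MS} by checking that the AFW-H flux is strongly conservative in this case. When $ n = 1 $ the facets of $ \mathcal{T} _h $ are points; the only nonvanishing tangential traces are those of $ 0 $-forms, and for each interior node there is an element of $ \ringhat{ V } _h ^{\mathrm{tan}} $ --- a continuous nodal basis function vanishing on $ \partial \Omega $ --- whose tangential trace is supported at that node alone and may be prescribed arbitrarily there. Testing the conservativity condition \eqref{eq:jump.conservativity} against these functions one node at a time forces $ \jump{ \widehat{ z } _h ^{\mathrm{nor}} } = 0 $; equivalently, in one dimension $ \jump{ \widehat{ z } _h ^{\mathrm{nor}} } $ lies in $ \ringhat{ V } _h ^{\mathrm{tan}} $ automatically, so \cref{lemma:strong.conservativity} applies. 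Being strongly conservative and multisymplectic, $ \Phi $ yields a strongly multisymplectic method by \cref{thm:strong.MS}; and, as recorded in \cref{ex:1D} and the examples that follow, for $ n = 1 $ the multisymplectic conservation law is precisely the symplectic one.

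The ``only if'' half --- that AFW-H ceases to be strongly multisymplectic once $ n \geq 2 $ --- is the heart of the theorem and I would settle it with an explicit counterexample, organized by the following bookkeeping. Since the method is weakly multisymplectic, \cref{lem:jump} gives $ [ \widehat{ w } _1 , \widehat{ w } _2 ] _{ \partial \mathcal{K} } = 0 $ for every $ \mathcal{K} \subset \mathcal{T} _h $; subtracting off $ [ \widehat{ w } _1 , \widehat{ w } _2 ] _{ \partial ( \overline{ \bigcup \mathcal{K} } ) } $ and using that $ \widehat{ w } _i ^{\mathrm{tan}} \in \widehat{ V } _h ^{\mathrm{tan}} $ is genuinely single-valued on interior facets, the discrepancy collapses to $ 2 \sum _{ e } \bigl( \langle \widehat{ w } _1 ^{\mathrm{tan}} , \jump{ \widehat{ w } _2 ^{\mathrm{nor}} } \rangle _e - \langle \widehat{ w } _2 ^{\mathrm{tan}} , \jump{ \widehat{ w } _1 ^{\mathrm{nor}} } \rangle _e \bigr) $, the sum over interior facets $ e $ of $ \mathcal{K} $. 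Varying $ \mathcal{K} $ by a single element adjacent along one facet shows that strong multisymplecticity is equivalent to the vanishing, on every interior facet, of this antisymmetric pairing for all variations. Now for $ n \geq 2 $ the AFW-H normal trace is only \emph{weakly} single-valued: \eqref{eq:jump.conservativity} gives just $ \jump{ \widehat{ w } _i ^{\mathrm{nor}} } \perp \ringhat{ V } _h ^{\mathrm{tan}} $, which on a mesh with positive-dimensional facets does not force $ \jump{ \widehat{ w } _i ^{\mathrm{nor}} } = 0 $. I would therefore take the lowest-order $ k = 0 $ Hodge--Laplace problem --- where variations are discrete-harmonic $ 0 $-forms and $ \widehat{ w } _i ^{\mathrm{nor}} $ is the nonconservative numerical Neumann trace familiar from continuous Galerkin --- on a minimal mesh of two $ n $-simplices sharing a facet, and choose two variations whose numerical Neumann traces jump across that facet so that the interface pairing above is nonzero. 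The main obstacle is exactly this construction: one must fix the geometry and the two discrete-harmonic variations, confirm that the local solvers \eqref{eq:weakvar_w}--\eqref{eq:weakvar_wnor} and the coupling \eqref{eq:weakvar_wtan} hold, and then verify by direct computation that $ [ \widehat{ w } _1 , \widehat{ w } _2 ] _{ \partial ( \overline{ \bigcup \mathcal{K} } ) } \neq 0 $ for this two-element patch.
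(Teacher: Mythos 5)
Your first two steps are correct and essentially coincide with the paper's argument: multisymplecticity follows from \cref{lem:afw-h_flux} exactly as you say (the paper also notes the even quicker route that \eqref{eq:weakvar_wnor} forces $ \widehat{ w } _i ^{\mathrm{tan}} = w _i ^{\mathrm{tan}} $, making \eqref{eq:mscl_jump} trivial), and for $ n = 1 $ the paper likewise observes that every single-valued normal jump at a vertex automatically lies in $ \ringhat{V} _h ^{\mathrm{tan}} $, so strong conservativity and hence strong multisymplecticity follow from \cref{lemma:strong.conservativity} and \cref{thm:strong.MS}. Your reduction of the strong/weak discrepancy to the interface pairing $ 2 \bigl( \langle \widehat{ w } _1 ^{\mathrm{tan}} , \jump{ \widehat{ w } _2 ^{\mathrm{nor}} } \rangle _e - \langle \widehat{ w } _2 ^{\mathrm{tan}} , \jump{ \widehat{ w } _1 ^{\mathrm{nor}} } \rangle _e \bigr) $ is also correct and consistent with \cref{prop:average-jump}.

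The gap is the ``only if'' direction, which you yourself identify as the heart of the theorem and then leave as a to-do list. Describing the right setting (lowest-order $ k = 0 $ Hodge--Laplace on two $n$-simplices sharing a facet) is not the same as exhibiting two variations with a nonvanishing interface pairing; a priori the weak conservativity condition \eqref{eq:jump.conservativity} might conspire with the local solvers to kill the pairing anyway, so the nonvanishing must actually be verified. The paper does this in \cref{lem:cg-h_weak}, and the construction hinges on an observation your sketch does not exploit: on the two-element patch with $ \mathcal{P} _1 \Lambda ^0 $, every vertex lies on $ \partial \Omega $ when $ n > 1 $, so $ \mathring{ V } _h ^0 $ is trivial, \eqref{eq:afw} is vacuous, and \emph{every} $ v _i \in V _h ^0 $ is a variation --- no discrete-harmonicity needs to be arranged. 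One then takes $ v _1 \equiv 1 $ (forcing $ \widehat{ \eta } _1 ^{\mathrm{nor}} = 0 $) and prescribes $ \widehat{ \eta } _2 ^{\mathrm{nor}} $ directly --- equal to $1$ on the shared facet and $ -n $ at the two opposite vertices of equilateral simplices, so that it has zero mean on each $ \partial K ^\pm $ and hence is in the range of the map \eqref{eq:cg-h_etanor}, which produces the corresponding $ v _2 $. Then $ \langle \widehat{ v } _1 ^{\mathrm{tan}} , \llbracket \widehat{ \eta } _2 ^{\mathrm{nor}} \rrbracket \rangle _e = \langle 1 , 1 \rangle _e > 0 $ while the other pairing vanishes. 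Without carrying out some such explicit construction, your proof of the theorem is incomplete.
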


\begin{proof}
  There are two ways to see that the flux $ \Phi $ is multisymplectic.
  First, if $ ( w _i, \widehat{ w } _i ) $ are variations for
  $ i = 1 , 2 $, then we have seen above that \eqref{eq:weakvar_wnor}
  implies $ \widehat{ w } _i ^{\mathrm{tan}} = w _i ^{\mathrm{tan}}
  $. Thus, \eqref{eq:mscl_jump} holds, and $\Phi$ is multisymplectic
  by \cref{lem:jump}.  Alternatively, since
  $ \widehat{ W } _h ^{\mathrm{nor}} = \widehat{ W } _h
  ^{\mathrm{tan}} = W _h ^{\mathrm{tan}} $, the hypothesis of
  \cref{lem:afw-h_flux} becomes
  $ W _h ^{\mathrm{nor}} \subset W _h ^{\mathrm{tan}} + ( W _h
  ^{\mathrm{tan}} ) ^\perp = L ^2 \Lambda ( \partial \mathcal{T} _h )
  $. This holds, so $\Phi$ is multisymplectic.

  If $ n = 1 $, facets are simply vertices, so weak conservativity is
  the same as strong conservativity. Indeed, every single-valued
  function on vertices is in $ \widehat{ V } _h ^{\mathrm{tan}} $, so
  in particular
  $ \llbracket \widehat{ z } _h ^{\mathrm{nor}} \rrbracket \in
  \ringhat{V} _h ^{\mathrm{tan}} $, and strong conservativity holds by
  \cref{lemma:strong.conservativity}. Strong multisymplecticity then
  follows by \cref{thm:strong.MS}. Conversely, when $ n > 1 $, the
  failure of strong multisymplecticity for arbitrary Hamiltonian
  systems follows from the explicit counterexamples constructed in
  \cref{lem:cg-h_weak} below.
\end{proof}

We next turn our focus to the semilinear $k$-form Hodge--Laplace
problem, which will be a source of examples for which strong
multisymplecticity does or does not hold when $ n > 1 $. Using
$ \widehat{ \sigma } _h ^{\mathrm{tan}} = \sigma _h ^{\mathrm{tan}} $
and $ \widehat{ u } _h ^{\mathrm{tan}} = u _h ^{\mathrm{tan}} $ to
integrate \eqref{eq:weakform_v}--\eqref{eq:weakform_eta} by parts, as
in \eqref{eq:conforming_w_rearranged}, and eliminating
$ \rho _h = \mathrm{d} u _h $, gives the equivalent problem where
\begin{equation*}
  \sigma _h \in W _h ^{ k -1 } , \quad  u _h \in W _h ^k , \quad 
 \widehat{ u } _h ^{\mathrm{nor}} \in \widehat{ W } _h
^{k-1,\mathrm{nor}} , \quad 
\widehat{ \rho } _h ^{\mathrm{nor}} \in \widehat{ W } _h ^{k,
  \mathrm{nor}} , \quad 
\widehat{ \sigma } _h ^{\mathrm{tan}} \in \widehat{ V } _h
^{k-1,\mathrm{tan}} , \quad 
\widehat{ u } _h ^{\mathrm{tan}} \in \widehat{ V } _h ^{k,
  \mathrm{tan}}
\end{equation*}
satisfy
\begin{subequations}
  \label{eq:afw-h_HL}
  \begin{alignat}{2}
    ( \sigma _h , \tau _h ) _{ \mathcal{T} _h } - ( u _h , \mathrm{d} \tau _h ) _{ \mathcal{T} _h } + \langle \widehat{ u } _h ^{\mathrm{nor}} , \tau _h ^{\mathrm{tan}} \rangle _{ \partial \mathcal{T} _h } &= 0, \quad &\forall \tau _h &\in W _h ^{ k -1 } \\
    ( \mathrm{d} \sigma _h , v _h ) _{ \mathcal{T} _h } + ( \mathrm{d} u _h , \mathrm{d} v _h ) _{ \mathcal{T} _h } - \langle \widehat{ \rho } _h ^{\mathrm{nor}} , v _h ^{\mathrm{tan}} \rangle _{ \partial \mathcal{T} _h } &= \biggl( \frac{ \partial F }{ \partial u _h } , v _h \biggr) _{ \mathcal{T} _h } , \quad &\forall v _h &\in W _h ^k ,\\
    \langle \widehat{ \sigma } _h ^{\mathrm{tan}} - \sigma _h ^{\mathrm{tan}} , \widehat{ v } _h ^{\mathrm{nor}} \rangle _{ \partial \mathcal{T} _h } &= 0 , \quad &\forall \widehat{ v } _h ^{\mathrm{nor}} &\in \widehat{ W } _h ^{k-1, \mathrm{nor}}, \\
    \langle \widehat{ u } _h ^{\mathrm{tan}} - u _h ^{\mathrm{tan}} , \widehat{ \eta } _h ^{\mathrm{nor}} \rangle _{ \partial \mathcal{T} _h } &= 0 , \quad &\forall \widehat{ \eta } _h ^{\mathrm{nor}} &\in \widehat{ W } _h ^{k, \mathrm{nor}}, \\
    \langle \widehat{ u } _h ^{\mathrm{nor}} , \widehat{ \tau } _h ^{\mathrm{tan}} \rangle _{ \partial \mathcal{T} _h } &= 0 , \quad &\forall \widehat{ \tau } _h ^{\mathrm{tan}} &\in \ringhat{V} _h ^{k-1,\mathrm{tan}} \\
    \langle \widehat{ \rho } _h ^{\mathrm{nor}} , \widehat{ v } _h ^{\mathrm{tan}} \rangle _{ \partial \mathcal{T} _h } &= 0 , \quad &\forall \widehat{ v } _h ^{\mathrm{tan}} &\in \ringhat{V} _h ^{k,\mathrm{tan}} .
  \end{alignat}
\end{subequations}
By \cref{thm:afw-h_conforming}, this hybridizes the conforming FEEC
problem where $ \sigma _h \in V _h ^{ k -1 } $, $ u _h \in V _h ^k $
satisfy
\begin{subequations}
  \label{eq:afw}
  \begin{alignat}{2}
    ( \sigma _h , \tau _h ) _\Omega - ( u _h , \mathrm{d} \tau _h ) _\Omega &= 0 , \quad &\forall \tau _h &\in \mathring{ V } _h ^{ k -1 } ,\\
    ( \mathrm{d} \sigma _h , v _h ) _\Omega + ( \mathrm{d} u _h , \mathrm{d} v _h ) _\Omega &= \biggl( \frac{ \partial F }{ \partial u _h } , v _h \biggr) _\Omega , \quad &\forall v _h &\in \mathring{ V } _h ^k .
  \end{alignat}
\end{subequations}
For the linear Hodge--Laplace problem, this agrees with the
hybridization of \citet{AwFaGuSt2023} of the FEEC methods of
\citet{ArFaWi2006,ArFaWi2010} (again modulo boundary conditions and
harmonic forms).

The AFW-H method fails to be strongly multisymplectic for this problem
when $ n > 1 $ and $ k = 0 $, i.e., when it coincides with the CG-H
method. Failure of strong multisymplecticity for CG-H was shown in
\citep[Proposition~1]{McSt2020} for the case $ n = 2 $, where the
proof involved a direct computation. The following refinement of that
proof gives counterexamples for arbitrary $ n > 1 $, thereby also
completing the proof of \cref{thm:MS.FEEC}.

\begin{lemma}
  \label{lem:cg-h_weak}
  The AFW-H method is not strongly multisymplectic for the
  Hodge--Laplace problem when $ k = 0 $ and $ n > 1 $ (in which case
  it coincides with the CG-H method).
\end{lemma}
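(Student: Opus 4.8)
The plan is to reduce to the linear Poisson problem and then build a two-element counterexample in which the element-wise multisymplectic forms fail to cancel across the shared facet.

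For the reduction, take $ F = 0 $, so that $ f ^\prime ( z _h ) $ is (trivially) symmetric, and $ z _h = 0 $, which solves \eqref{eq:weakform}. After eliminating the vanishing $ \sigma $-variable and $ \rho _h = \mathrm{d} u _h $ as in \eqref{eq:afw-h_HL}, a first variation $ ( w _i , \widehat{ w } _i ) $ of $ z _h = 0 $ is nothing but a continuous $ v _i \in V _h ^0 $ that is discretely harmonic, $ ( \mathrm{d} v _i , \mathrm{d} v _h ) _\Omega = 0 $ for all $ v _h \in \mathring{ V } _h ^0 $, with $ \widehat{ v } _i ^{\mathrm{tan}} = v _i ^{\mathrm{tan}} $ and numerical flux $ \widehat{ \rho } _i ^{\mathrm{nor}} \rvert _{ \partial K } $ the unique element of $ \widehat{ W } _h ^{ 0 , \mathrm{nor} } ( \partial K ) $ solving $ \langle \widehat{ \rho } _i ^{\mathrm{nor}} , v _h ^{\mathrm{tan}} \rangle _{ \partial K } = ( \mathrm{d} v _i , \mathrm{d} v _h ) _K $ for all $ v _h \in W _h ^0 ( K ) $.

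Next I isolate the obstruction. Since the AFW-H method is multisymplectic (\cref{thm:MS.FEEC}), for any $ \mathcal{K} \subset \mathcal{T} _h $ we have $ \sum _{ K \in \mathcal{K} } [ \widehat{ w } _1 , \widehat{ w } _2 ] _{ \partial K } = 0 $, while additivity of $ [ \cdot , \cdot ] _{ \partial ( \cdot ) } $ over facets gives
\[
  \sum _{ K \in \mathcal{K} } [ \widehat{ w } _1 , \widehat{ w } _2 ] _{ \partial K } = [ \widehat{ w } _1 , \widehat{ w } _2 ] _{ \partial ( \overline{ \bigcup \mathcal{K} } ) } + \sum _e \bigl( [ \widehat{ w } _1 , \widehat{ w } _2 ] _{ e ^+ } + [ \widehat{ w } _1 , \widehat{ w } _2 ] _{ e ^- } \bigr) ,
\]
the sum being over interior facets $ e $ of $ \mathcal{K} $. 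For $ k = 0 $ the bracket reduces to $ [ \widehat{ w } _1 , \widehat{ w } _2 ] _{ \partial K } = \langle v _1 ^{\mathrm{tan}} , \widehat{ \rho } _2 ^{\mathrm{nor}} \rangle _{ \partial K } - \langle v _2 ^{\mathrm{tan}} , \widehat{ \rho } _1 ^{\mathrm{nor}} \rangle _{ \partial K } $, and since $ v _i ^{\mathrm{tan}} $ is single-valued, each summand over $ e $ equals $ 2 \bigl( \langle v _1 ^{\mathrm{tan}} , \jump{ \widehat{ \rho } _2 ^{\mathrm{nor}} } \rangle _e - \langle v _2 ^{\mathrm{tan}} , \jump{ \widehat{ \rho } _1 ^{\mathrm{nor}} } \rangle _e \bigr) $. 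Hence strong multisymplecticity fails once we produce a mesh, a pair of discretely harmonic variations, and a collection $ \mathcal{K} $ for which this sum is nonzero; equivalently, once the facet pairing $ ( v _1 , v _2 ) \mapsto \langle v _1 ^{\mathrm{tan}} , \jump{ \widehat{ \rho } _2 ^{\mathrm{nor}} } \rangle _e $ fails to be symmetric.

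For the construction, let $ \mathcal{T} _h = \{ K _1 , K _2 \} $ be two $ n $-simplices sharing a single facet $ e $, with $ \Omega = \overline{ K _1 \cup K _2 } $, and take lowest-order elements $ W _h ^0 ( K ) = \mathcal{P} _1 \Lambda ^0 ( K ) $. Since $ n > 1 $, all $ n + 2 $ vertices lie on $ \partial \Omega $, so $ \mathring{ V } _h ^0 = \{ 0 \} $: the discrete harmonicity constraint is vacuous, every continuous piecewise-affine $ v _i $ is an admissible variation, and $ e $ is the only interior facet. Choose $ \mathcal{K} = \{ K _1 , K _2 \} $, so $ \partial ( \overline{ \bigcup \mathcal{K} } ) = \partial \Omega $. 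For affine $ v _i $, the flux $ \widehat{ \rho } _i ^{\mathrm{nor}} \rvert _{ \partial K _j } $ is the continuous piecewise-affine function on $ \partial K _j $ obtained by letting $ v _h $ range over the barycentric coordinates of $ K _j $ in the defining relation, from which one reads off $ \jump{ \widehat{ \rho } _i ^{\mathrm{nor}} } $ on $ e $. Fixing a concrete geometry — say the reference simplex and its reflection across the hyperplane of $ e $ — and a concrete pair, e.g.\ $ v _1 , v _2 $ the nodal hat functions at two distinct vertices of $ e $, a direct computation then exhibits $ \langle v _1 ^{\mathrm{tan}} , \jump{ \widehat{ \rho } _2 ^{\mathrm{nor}} } \rangle _e \neq \langle v _2 ^{\mathrm{tan}} , \jump{ \widehat{ \rho } _1 ^{\mathrm{nor}} } \rangle _e $, hence $ [ \widehat{ w } _1 , \widehat{ w } _2 ] _{ \partial \Omega } \neq 0 $. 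This refines the $ n = 2 $ computation of \citet[Proposition~1]{McSt2020}.

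The main obstacle is the last computation: one must set up and solve the two local flux systems and, in forming $ \jump{ \widehat{ \rho } _i ^{\mathrm{nor}} } $ on $ e $, track the orientation conventions carefully — in particular that the boundary Hodge star defining the normal trace carries opposite signs on $ e $ as seen from $ K _1 $ versus $ K _2 $, which is exactly why $ \jump{ \widehat{ \rho } _i ^{\mathrm{nor}} } $ is a genuine obstruction rather than one that cancels automatically. One should also check that the symmetric reference geometry does not render the pairing accidentally symmetric; if it does, perturbing $ K _2 $ or changing the test pair removes the degeneracy, since symmetry of the pairing for all admissible meshes and variations would be precisely strong multisymplecticity.
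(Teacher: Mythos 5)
Your reduction coincides with the paper's: take $F=0$ so every $z_h$ trivially solves \eqref{eq:afw}, use two lowest-order simplices sharing a facet so that $\mathring{V}_h^0=\{0\}$ and every $v_i\in V_h^0$ is a variation with flux determined by \eqref{eq:cg-h_etanor}, and reduce the failure of strong multisymplecticity to the asymmetry of the facet pairing $(v_1,v_2)\mapsto\langle v_1^{\mathrm{tan}},\jump{\widehat{\eta}_2^{\mathrm{nor}}}\rangle_e$. But the proof stops exactly where the work begins: you never exhibit a pair for which this pairing is asymmetric. Worse, the concrete pair you propose---the nodal hat functions at two distinct vertices of $e$, on the reference simplex and its mirror image---cannot work: the mesh admits an isometry swapping those two vertices while preserving both elements (for the reference simplex, the coordinate permutation $e_1\leftrightarrow e_2$), the flux map $v\mapsto\widehat{\eta}^{\mathrm{nor}}[v]$ is equivariant under such isometries, and equivariance forces $\langle v_1^{\mathrm{tan}},\jump{\widehat{\eta}_2^{\mathrm{nor}}}\rangle_e=\langle v_2^{\mathrm{tan}},\jump{\widehat{\eta}_1^{\mathrm{nor}}}\rangle_e$ for that pair. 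Your fallback---perturb the mesh or change the pair, ``since symmetry for all meshes and variations would be precisely strong multisymplecticity''---is not an argument: that the pairing is \emph{not} always symmetric is exactly the content of the lemma, so one must produce a witness rather than observe that a witness would suffice.

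The paper closes this gap with a device worth noting. It takes $v_1\equiv 1$, so that \eqref{eq:cg-h_etanor} gives $\widehat{\eta}_1^{\mathrm{nor}}=0$ and one side of the would-be symmetry identity vanishes identically. For the other side it does not pick $v_2$ and solve for its flux; it prescribes the flux first, choosing $\widehat{\eta}_2^{\mathrm{nor}}$ equal to $1$ on $e$ and $-n$ at the two opposite vertices, verifies that this trace is orthogonal on each $\partial K^\pm$ to the kernel of the right-hand side of \eqref{eq:cg-h_etanor} (the piecewise constants---this is where the equilateral geometry enters), and then invokes solvability to obtain an honest variation $v_2\in V_h^0$ realizing it. The conclusion $\langle 1,\jump{\widehat{\eta}_2^{\mathrm{nor}}}\rangle_e=\langle 1,1\rangle_e>0\neq 0$ then requires no explicit linear solve. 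To complete your version, either adopt this inverse construction or actually carry out the per-element flux computations for a genuinely asymmetric pair (e.g., a constant against a hat function), including the orientation bookkeeping you correctly flag.
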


\begin{proof}
  Consider Laplace's equation ($ F = 0 $).  Suppose $ \mathcal{T} _h $
  consists of two equilateral $n$-simplices $ K ^\pm $ sharing a
  common facet $e$, and let
  $ W _h ^0 ( K ^\pm ) = \mathcal{P} _1 \Lambda ^0 ( K ^\pm ) $. The
  degrees of freedom are at vertices, all of which lie on the boundary
  of the domain when $ n > 1 $, so $ \mathring{ V } _h ^0 $ is trivial
  and thus \eqref{eq:afw} is trivially satisfied. Hence, every
  $ v _i \in V _h ^0 $ corresponds to a variation, and
  $ \widehat{ \eta } _i ^{\mathrm{nor}} $ may be recovered by solving
  \begin{equation}
    \label{eq:cg-h_etanor}
    \langle \widehat{ \eta } _i ^{\mathrm{nor}} , v _h ^{\mathrm{tan}} \rangle _{ \partial \mathcal{T} _h } = ( \mathrm{d} v _i , \mathrm{d} v _h ) _{ \mathcal{T} _h } , \quad \forall v _h \in W _h ^0 .
  \end{equation}
  For a counterexample to strong multisymplecticity, it therefore
  suffices to produce $ v _1 , v _2 \in V _h ^0 $ such that
  $ \bigl\langle \widehat{ v } _1 ^{\mathrm{tan}} , \llbracket
  \widehat{ \eta } _2 ^{\mathrm{nor}} \rrbracket \bigr\rangle _e \neq
  \bigl\langle \widehat{ v } _2 ^{\mathrm{tan}} , \llbracket \widehat{
    \eta } _1 ^{\mathrm{nor}} \rrbracket \bigr\rangle _e $.

  Take $ v _1 = 1 $ constant, so that \eqref{eq:cg-h_etanor} implies
  $ \widehat{ \eta } _1 ^{\mathrm{nor}} = 0 $. Next, take
  $ \widehat{ \eta } _2 ^{\mathrm{nor}} \in W _h ^{ 0 , \mathrm{tan} }
  $ equal to $1$ on $e$ and $ -n $ at the two vertices opposite $e$
  (one belonging to each of $ K ^\pm $). It follows that
  \begin{equation*}
    \langle \widehat{ \eta } _2 ^{\mathrm{nor}} , 1 \rangle _{ \partial K ^\pm } = \int _{ \partial K ^\pm } \widehat{ \eta } _2 ^{\mathrm{nor}} = 0 ,
  \end{equation*}
  because each vertex degree of freedom contributes equally (since
  $ K ^\pm $ are equilateral) and they sum to zero. Since
  $ \widehat{ \eta } _2 ^{\mathrm{nor}} $ is
  $ L ^2 \Lambda ( \partial \mathcal{T} _h ) $-orthogonal to piecewise
  constants, i.e., to the kernel of the right-hand side of
  \eqref{eq:cg-h_etanor}, there exists $ v _2 \in W _h ^0 $ satisfying
  \eqref{eq:cg-h_etanor}, and this is unique if we insist that
  $ v _2 $ is also orthogonal to the kernel. Furthermore, since
  $ \widehat{ \eta } _2 ^{\mathrm{nor}} $ has the same degrees of
  freedom on $ K ^\pm $ (simply reflected about $e$), by symmetry the
  same is true of $ v _2 $, and in particular $ v _2 \in V _h ^0 $.

  Finally, since
  $ \widehat{ v } _1 ^{\mathrm{tan}} = v _1 ^{\mathrm{tan}} = 1 $,
  $ \widehat{ v } _2 ^{\mathrm{tan}} = v _2 ^{\mathrm{tan}} $,
  $ \widehat{ \eta } _1 ^{\mathrm{nor}} = 0 $, and
  $ \llbracket \widehat{ \eta } _2 ^{\mathrm{nor}} \rrbracket _e =
  \frac{1}{2} ( \widehat{ \eta } _2 ^{\mathrm{nor}} \rvert _{ e ^+ } +
  \widehat{ \eta } _2 ^{\mathrm{nor}} \rvert _{ e ^- } ) = 1 $,
  \begin{equation*}
    \bigl\langle \widehat{ v } _1 ^{\mathrm{tan}}, \llbracket \widehat{ \eta } _2 ^{\mathrm{nor}} \rrbracket \bigr\rangle _e = \langle 1, 1 \rangle _e > 0 , \qquad \bigl\langle \widehat{ v } _2 ^{\mathrm{tan}} , \llbracket \widehat{ \eta } _1 ^{\mathrm{nor}} \rrbracket \bigr\rangle _e = 0 ,
  \end{equation*}
  and thus strong multisymplecticity is violated, as claimed.
\end{proof}

\begin{remark}
  Note that \emph{weak} multisymplecticity still holds for the
  counterexample constructed above: since
  $ \widehat{ \eta } _2 ^{\mathrm{nor}} $ is orthogonal to the
  constant $ v _1 $ on $ \partial K ^\pm $, we have
  $ \langle \widehat{ v } _1 ^{\mathrm{tan}} , \widehat{ \eta } _2
  ^{\mathrm{nor}} \rangle _{ \partial K^\pm } = 0 = \langle \widehat{
    v } _2 ^{\mathrm{tan}} , \widehat{ \eta } _1 ^{\mathrm{nor}}
  \rangle _{ \partial K ^\pm } $.
\end{remark}

Finally, we discuss a situation in which the AFW-H method \emph{is}
strongly multisymplectic: the case $ k = n $ where AFW-H coincides
with either the BDM-H or RT-H method. Strong multisymplecticity of
these methods was already proved in \citet[Theorems 5 and
6]{McSt2020}, but we now give a self-contained proof in the language
of FEEC.

\begin{proposition}
  \label{prop:afw-h_strong}
  Consider AFW-H for the $k$-form Hodge--Laplace problem, using the
  spaces \eqref{eq:stable_pairs} on a simplicial triangulation. If
  $ k = n $, then the method is strongly multisymplectic.
\end{proposition}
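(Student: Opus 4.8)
The plan is to deduce the result from \cref{thm:strong.MS}: since \cref{thm:MS.FEEC} already tells us that the AFW-H flux $ \Phi ( z _h , \widehat{ z } _h ) = \widehat{ z } _h ^{\mathrm{tan}} - z _h ^{\mathrm{tan}} $ is multisymplectic, it suffices to show that it is \emph{strongly conservative} when $ k = n $, and by \cref{lemma:strong.conservativity} this in turn reduces to checking that $ \jump{ \widehat{ z } _h ^{\mathrm{nor}} } \in \ringhat{V} _h ^{\mathrm{tan}} $. The first step is to unwind the form-degree bookkeeping at $ k = n $. Here $ \rho _h $ is an $ ( n + 1 ) $-form, hence identically zero, so $ \widehat{ z } _h ^{\mathrm{nor}} = \widehat{ u } _h ^{\mathrm{nor}} \oplus \widehat{ \rho } _h ^{\mathrm{nor}} $ collapses to just $ \widehat{ u } _h ^{\mathrm{nor}} $, the approximate normal trace of the $ n $-form $ u _h $, which has degree $ n -1 $ on each $ \partial K $; similarly $ \widehat{ u } _h ^{\mathrm{tan}} $ and $ \ringhat{V} _h ^{ n , \mathrm{tan} } $ vanish because $ \Lambda ^n ( \partial K ) = 0 $. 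For the AFW-H spaces, $ \widehat{ u } _h ^{\mathrm{nor}} $ lives in $ \widehat{ W } _h ^{ n -1 , \mathrm{nor} } ( \partial K ) = W _h ^{ n -1 , \mathrm{tan} } ( \partial K ) $, so the whole claim reduces to showing $ \jump{ \widehat{ u } _h ^{\mathrm{nor}} } \in \ringhat{V} _h ^{ n -1 , \mathrm{tan} } $, and this I would in fact verify for \emph{every} $ \widehat{ u } _h ^{\mathrm{nor}} \in \widehat{ W } _h ^{ n -1 , \mathrm{nor} } $, independently of \eqref{eq:weakform_wnor}.

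The key structural observation---the step that genuinely uses $ k = n $---is that the tangential trace of an $ ( n -1 ) $-form on the facet $ \partial K $ is a \emph{top-degree} form on that facet; equivalently, in the $ H(\operatorname{div}) $ language, the normal component of a $ W _h ^{ n -1 } (K) $-function on a facet $ e $ is a scalar density that may be prescribed freely within the per-facet polynomial space, with \emph{no} compatibility condition coupling adjacent facets (there is simply no data to match on the codimension-$2$ faces $ \partial e $, since a top-degree form on $ e $ pulls back to zero there). Concretely, with the choices \eqref{eq:stable_pairs}, $ W _h ^{ n -1 , \mathrm{tan} } ( \partial K ) $ factors as a direct product over the facets $ e \subset \partial K $ of the spaces $ \mathcal{P} _{ r + 1 } \Lambda ^{ n -1 } ( e ) $ (resp.\ $ \mathcal{P} _{ r + 1 } ^- \Lambda ^{ n -1 } ( e ) $), corresponding to the facet degrees of freedom in \citet{ArFaWi2006,ArFaWi2010}. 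Since this per-facet trace space depends only on $ e $ and not on which element borders it, the facet-wise formula in \cref{def:jump_avg} defining $ \jump{ \widehat{ u } _h ^{\mathrm{nor}} } $---an average of the two one-sided restrictions on interior facets, and zero on boundary facets---produces, facet by facet, an element of that same space, and these assemble, with no compatibility to verify, into an element of $ W _h ^{ n -1 , \mathrm{tan} } = \widehat{ W } _h ^{ n -1 , \mathrm{tan} } $.

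It then remains only to check that $ \jump{ \widehat{ u } _h ^{\mathrm{nor}} } $ is single-valued, i.e.\ that its (tangential) jump vanishes, so that it lies in $ \ringhat{V} _h ^{ n -1 , \mathrm{tan} } $. On a boundary facet this is immediate since $ \jump{ \widehat{ u } _h ^{\mathrm{nor}} } = 0 $ there by definition; on an interior facet $ e = \partial K ^+ \cap \partial K ^- $, the expression $ \jump{ \widehat{ u } _h ^{\mathrm{nor}} } _{ e ^\pm } = \tfrac{1}{2} \bigl( \widehat{ u } _h ^{\mathrm{nor}} \rvert _{ e ^+ } + \widehat{ u } _h ^{\mathrm{nor}} \rvert _{ e ^- } \bigr) $ is symmetric in $ e ^+ $ and $ e ^- $, so it takes equal values on the two sides---which, read now as a tangential trace (normal jump is tangential average), says precisely that its tangential jump is zero. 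Hence $ \jump{ \widehat{ z } _h ^{\mathrm{nor}} } = \jump{ \widehat{ u } _h ^{\mathrm{nor}} } \in \ringhat{V} _h ^{ n -1 , \mathrm{tan} } = \ringhat{V} _h ^{\mathrm{tan}} $, so $ \Phi $ is strongly conservative by \cref{lemma:strong.conservativity}, and \eqref{eq:weakform} is strongly multisymplectic by \cref{thm:strong.MS}. I expect the main obstacle to be the structural fact of the middle paragraph together with its degree/jump bookkeeping: one must pin down that for $ k = n $---and only then---the relevant approximate-normal-trace space carries no inter-facet continuity constraint, which is exactly the property that fails for the $ k = 0 $ counterexample of \cref{lem:cg-h_weak}, where the degree-$0$ tangential traces are forced to agree at vertices.
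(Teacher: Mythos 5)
Your proof is correct and follows essentially the same route as the paper's: reduce to strong conservativity via \cref{lemma:strong.conservativity} and \cref{thm:strong.MS}, observe that the $\rho$-component of the normal trace is trivial at $k=n$, and use the fact that $W_h^{n-1,\mathrm{tan}}(\partial K)$ factors as a product over facets with no inter-facet compatibility (the paper highlights exactly this as the key ingredient in a remark following its proof), so that the normal jump lands in $\ringhat{V}_h^{n-1,\mathrm{tan}}$. The only cosmetic discrepancy is the polynomial degree you assign to the per-facet trace space, which does not affect the argument since all that matters is that the space is the same from both sides of each interior facet.
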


\begin{proof}
  When $ k = n $, AFW-H is equivalent to BDM-H if
  $ W _h ^{ n -1 } ( K ) = \mathcal{P} _{ r + 1 } \Lambda ^{ n -1 }
  (K) $ and to RT-H if
  $ W _h ^{ n -1 } (K) = \mathcal{P} _{ r + 1 } ^- \Lambda ^{ n -1 }
  (K) $. For either choice of $ W _h ^{ n -1 } (K) $, we have
  \begin{equation*}
    \widehat{ W } _h ^{n-1, \mathrm{nor}} ( \partial K ) = \widehat{ W } _h ^{n-1, \mathrm{tan}} ( \partial K ) = W _h ^{n -1 , \mathrm{tan}} ( \partial K ) = \prod _{ e \subset \partial K } \mathcal{P} _r \Lambda ^{ n -1 } (e) .
  \end{equation*}
  If $e$ is an interior facet,
  $ \widehat{ u } _h ^{\mathrm{nor}} \rvert _{ e ^\pm } \in
  \mathcal{P} _r \Lambda ^{ n -1 } ( e ^\pm ) $ implies
  $ \llbracket \widehat{ u } _h ^{\mathrm{nor}} \rrbracket _{ e ^\pm }
  \in \mathcal{P} _r \Lambda ^{ n -1 } ( e ^\pm ) $, so
  $ \llbracket \widehat{ u } _h ^{\mathrm{nor}} \rrbracket \in
  \widehat{ W } _h ^{ n -1 , \mathrm{tan}} $. Furthermore, normal
  jumps are \emph{tangentially} single-valued and vanish on
  $ \partial \Omega $, so
  $ \llbracket \widehat{ u } _h ^{\mathrm{nor}} \rrbracket \in
  \ringhat{V} _h
  ^{n -1 , \mathrm{tan}} $. Finally,
  $ \widehat{ \rho } _h ^{\mathrm{nor}} $ is trivial, since it is an
  $n$-form on the $ (n-1) $-dimensional boundary. Hence, we have
  strong conservativity by \cref{lemma:strong.conservativity} and thus
  strong multisymplecticity by \cref{thm:strong.MS}.
\end{proof}

\begin{remark}
  The key ingredient in this proof is that
  $ W _h ^{ n -1 , \mathrm{tan} } ( \partial K ) = \prod _{ e \subset
    \partial K } W _h ^{ n -1 , \mathrm{tan} } (e) $, i.e., we can
  independently specify tangential traces on each facet, which is not
  true for $k$-forms when $ k < n -1 $. This property of finite
  element $ ( n-1) $-forms remains true for a large variety of FEEC
  elements on non-simplicial meshes: abstractly, this is a property of
  \emph{finite element systems} satisfying the \emph{extension
    property}, as developed by \citeauthor{Christiansen2008} and
  coauthors
  \citep{Christiansen2008,Christiansen2009,ChMuOw2011,ChGi2016,ChRa2016}.
\end{remark}

\subsection{The LDG-H method}
\label{sec:ldg-h}

We next discuss the multisymplecticity of a class of hybridizable
local discontinuous Galerkin (LDG-H) methods for canonical PDEs. For
the $k$-form Hodge--Laplace problem, this includes the LDG-H methods
described in \citet[Section 8.2.3]{AwFaGuSt2023}, as well as certain
cases of the extended Galerkin (XG) method of \citet*{HoLiXu2022}. For
$ k = 0 $ and $ k = n $, this also includes the original LDG-H method
described in \citet{CoGoLa2009} and its alternative implementation in
\citet[Section 5]{Cockburn2016}, respectively. We further show that
this class includes several HDG methods proposed for specific
problems, including Maxwell's equations
\citep{NgPeCo2011,ChQiShSo2017,ChQiSh2018} and the
vorticity-velocity-pressure form of Stokes flow \citep{CoGo2009}.

For LDG-H and other HDG methods, we assume that the trace spaces have
the form
\begin{equation*}
  \widehat{ W } _h ^{\mathrm{tan}} ( \partial K ) = \prod _{ e \subset \partial K } \widehat{ W } _h ^{\mathrm{tan}} (e) , \qquad \widehat{ W } _h ^{\mathrm{nor}} ( \partial K ) = L ^2 \Lambda ( \partial K ) ,
\end{equation*}
so that all three spaces $ W _h $,
$ \widehat{ W } _h ^{\mathrm{nor}} $, and
$ \widehat{ W } _h ^{\mathrm{tan}} $ are broken. We also assume that
$ \widehat{ W } _h ^{\mathrm{tan}} ( e ^+ ) = \widehat{ W } _h
^{\mathrm{tan}} ( e ^- ) $ at interior facets $e$, so that
$ \av{ \widehat{ W } _h ^{\mathrm{tan}} } = \ringhat{V} _h
^{\mathrm{tan}} $. There are many possibilities for $ W _h (K) $ and
$ \widehat{ W } _h ^{\mathrm{tan}} ( e ) $. On a simplicial
triangulation, one of the simplest choices is to take
\begin{equation}
  \label{eq:equal_order}
  W _h (K) = \mathcal{P} _r \Lambda (K) , \qquad \widehat{ W } _h ^{\mathrm{tan}} ( e ) = \mathcal{P} _r \Lambda (e) ,
\end{equation}
corresponding to an ``equal-order'' {HDG} method, i.e., the same $r$
is used for every form degree $k$. Other choices are possible, as we
will explore when we discuss the Hodge--Laplace problem.

The LDG-H flux function is given by
\begin{equation*}
  \Phi ( z _h , \widehat{ z } _h ) = ( \widehat{ z } _h
  ^{\mathrm{nor}} - z _h ^{\mathrm{nor}} ) + \alpha ( \widehat{ z } _h
  ^{\mathrm{tan}} - z _h ^{\mathrm{tan}} ),
\end{equation*}
and since
$ \widehat{ W } _h ^{\mathrm{nor}} = L ^2 \Lambda ( \partial
\mathcal{T} _h ) $, equation \eqref{eq:weakform_wnor} strongly
enforces the flux relation
\begin{equation}
  \label{eq:ldg-h_znor}
  \widehat{ z } _h ^{\mathrm{nor}} = z _h ^{\mathrm{nor}} - \alpha ( \widehat{ z } _h ^{\mathrm{tan}} - z _h ^{\mathrm{tan}} ) .
\end{equation}
As in \cref{lem:ldg-h_flux},
$ \alpha = \prod _{ K \in \mathcal{T} _h } \alpha _{\partial K} $ is a
bounded symmetric penalty operator on
$ L ^2 \Lambda ( \partial \mathcal{T} _h ) $; we assume further that
$ \alpha _{ \partial K } = \prod _{ e \subset \partial K } \alpha _e
$, so that the penalty may be evaluated facet-by-facet. A typical
choice is the \emph{piecewise-constant penalty}
\begin{equation}
  \label{eq:piecewise_constant_penalty}
  \alpha _e \widehat{ w } ^{\mathrm{tan}}  = \bigoplus _{ k = 0 } ^{ n -1 } \alpha ^k _e \widehat{ w } ^{k ,\mathrm{tan}} ,
\end{equation}
where $ \alpha ^k _e \in \mathbb{R} $. Note that
$ \alpha ^k _{ e ^\pm } $ need not be equal at interior facets, i.e.,
the penalty may be double-valued. Another interesting choice is the
\emph{reduced-stabilization penalty}
\begin{equation}
  \label{eq:reduced_penalty}
  \alpha _e \widehat{ w } ^{\mathrm{tan}}  = \mathbb{P} _e \bigoplus _{ k = 0 } ^{ n -1 } \alpha ^k _e \widehat{ w } ^{k ,\mathrm{tan}} ,
\end{equation}
where $ \mathbb{P} _e $ is $ L ^2 $-orthogonal projection onto
$ \widehat{ W } _h ^{\mathrm{tan}} (e) $. This latter form is inspired
by the work of \citet{Lehrenfeld2010,LeSc2016} and
\citet{Oikawa2015,Oikawa2016}.

\begin{remark}
  \label{rmk:reduced_wnor}
  Reduced stabilization can also be implemented by choosing
  $ \widehat{ W } _h ^{\mathrm{nor}} = \widehat{ W } _h
  ^{\mathrm{tan}} $ rather than
  $ \widehat{ W } _h ^{\mathrm{nor}} = L ^2 \Lambda ( \partial
  \mathcal{T} _h )$. This approach will be discussed in
  \cref{sec:reduced}.
\end{remark}

In practice, the fact that $ \widehat{ W } _h ^{\mathrm{nor}} $ is
infinite-dimensional is not a problem, since we can substitute
\eqref{eq:ldg-h_znor} wherever $ \widehat{ z } _h ^{\mathrm{nor}} $
appears and thereby eliminate \eqref{eq:weakform_wnor}. This yields a
reduced variational problem in the remaining unknowns
$ ( z _h , \widehat{ z } _h ^{\mathrm{tan}} ) \in W _h \times
\widehat{ V } _h ^{\mathrm{tan}} $, which we can write in the
symmetric form
\begin{subequations}
  \label{eq:ldg-h_weakform}
  \begin{align}
    ( z _h , \delta w _h ) _{ \mathcal{T} _h } + ( \delta z _h , w _h ) _{ \mathcal{T} _h } + \langle \widehat{ z } _h ^{\mathrm{tan}} , w _h ^{\mathrm{nor}} \rangle _{ \partial \mathcal{T} _h } + \bigl\langle \alpha ( \widehat{ z } _h ^{\mathrm{tan}} - z _h ^{\mathrm{tan}} ) , w _h ^{\mathrm{tan}} \bigr\rangle _{ \partial \mathcal{T} _h }  &= \bigl( f ( z  _h) , w _h \bigr) _{ \mathcal{T} _h } , \label{eq:ldg-h_weakform_w}\\
    \bigl\langle z _h ^{\mathrm{nor}} - \alpha ( \widehat{ z } _h ^{\mathrm{tan}} - z _h ^{\mathrm{tan}} ) , \widehat{ w } _h ^{\mathrm{tan}} \bigr\rangle _{ \partial \mathcal{T} _h } &= 0 ,\label{eq:ldg-h_weakform_wtan}
  \end{align}
\end{subequations}
for all
$ ( w _h , \widehat{ w } _h ^{\mathrm{tan}} ) \in W _h \times
\ringhat{V} _h ^{\mathrm{tan}} $. Here,
\eqref{eq:ldg-h_weakform_w} results from integrating
\eqref{eq:weakform_w} by parts.

\begin{theorem}
  \label{thm:ldg-h_ms}
  The LDG-H method is multisymplectic. Furthermore, if
  \begin{equation}
    \label{eq:ldg-h_strong}
    \llbracket W _h ^{\mathrm{nor}} \rrbracket + \bigav{ \alpha ( W _h ^{\mathrm{tan}} + \widehat{ V } _h ^{\mathrm{tan}} ) } \subset \ringhat{V} _h ^{\mathrm{tan}} ,
  \end{equation}
  then it is strongly multisymplectic. For the piecewise-constant
  penalty \eqref{eq:piecewise_constant_penalty}, this condition
  reduces to
  \begin{equation}
    \label{eq:ldg-h_strong_piecewise}
    \llbracket W _h ^{\mathrm{nor}} \rrbracket + \av{ W _h ^{\mathrm{tan}} } \subset \ringhat{V} _h ^{\mathrm{tan}} ,
  \end{equation}
  and for the reduced-stabilization penalty \eqref{eq:reduced_penalty} it reduces even further to
  \begin{equation}
    \label{eq:ldg-h_strong_reduced}
    \llbracket W _h ^{\mathrm{nor}} \rrbracket \subset \ringhat{V} _h ^{\mathrm{tan}}.
  \end{equation}
\end{theorem}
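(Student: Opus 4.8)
The plan is to derive all four claims from the machinery of \cref{sec:multisymplectic_hybrid}, so that essentially nothing new needs to be proved. For the first claim: since here $\widehat{W}_h^{\mathrm{nor}}(\partial K) = L^2\Lambda(\partial K)$, the inclusion $W_h^{\mathrm{tan}} + \widehat{W}_h^{\mathrm{tan}} \subset \widehat{W}_h^{\mathrm{nor}}$ required by \cref{lem:ldg-h_flux} holds trivially, and since $\alpha$ is a bounded symmetric operator that lemma shows the LDG-H flux $\Phi$ is multisymplectic; by \cref{lem:jump} and \cref{def:local_multisymplecticity}, the method \eqref{eq:weakform} is then multisymplectic.

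For strong multisymplecticity, by \cref{thm:strong.MS} it suffices to show that, under \eqref{eq:ldg-h_strong}, $\Phi$ is strongly conservative, and by \cref{lemma:strong.conservativity} it is enough to show that \eqref{eq:weakform_wnor} forces $\jump{\widehat{z}_h^{\mathrm{nor}}} \in \ringhat{V}_h^{\mathrm{tan}}$. Because $\widehat{W}_h^{\mathrm{nor}} = L^2\Lambda(\partial\mathcal{T}_h)$, equation \eqref{eq:weakform_wnor} is equivalent to the strong flux relation \eqref{eq:ldg-h_znor}, and I would apply the normal jump to both sides. One has $\jump{z_h^{\mathrm{nor}}} \in \jump{W_h^{\mathrm{nor}}}$, while $\widehat{z}_h^{\mathrm{tan}} - z_h^{\mathrm{tan}} \in W_h^{\mathrm{tan}} + \widehat{V}_h^{\mathrm{tan}}$; and, since $\alpha$ is facet-local and tangential traces are orientation-independent, the normal jump of $\alpha(\widehat{z}_h^{\mathrm{tan}} - z_h^{\mathrm{tan}})$ is evaluated by the same formula as the tangential average (cf.\ the remark after \cref{def:jump_avg}), so it equals $\bigav{\alpha(\widehat{z}_h^{\mathrm{tan}} - z_h^{\mathrm{tan}})} \in \bigav{\alpha(W_h^{\mathrm{tan}} + \widehat{V}_h^{\mathrm{tan}})}$. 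Hence $\jump{\widehat{z}_h^{\mathrm{nor}}} \in \jump{W_h^{\mathrm{nor}}} + \bigav{\alpha(W_h^{\mathrm{tan}} + \widehat{V}_h^{\mathrm{tan}})} \subset \ringhat{V}_h^{\mathrm{tan}}$ by \eqref{eq:ldg-h_strong}, which completes this part.

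For the two special penalties, the plan is to show that part of the term $\bigav{\alpha(W_h^{\mathrm{tan}} + \widehat{V}_h^{\mathrm{tan}})}$ already lies in $\ringhat{V}_h^{\mathrm{tan}}$ and can therefore be discarded. For the reduced-stabilization penalty \eqref{eq:reduced_penalty}, each $\alpha_e$ has range $\widehat{W}_h^{\mathrm{tan}}(e)$, so $\alpha(W_h^{\mathrm{tan}} + \widehat{V}_h^{\mathrm{tan}}) \subset \widehat{W}_h^{\mathrm{tan}}$ and its tangential average lies in $\av{\widehat{W}_h^{\mathrm{tan}}} = \ringhat{V}_h^{\mathrm{tan}}$; thus the entire second term is absorbed and \eqref{eq:ldg-h_strong} collapses to \eqref{eq:ldg-h_strong_reduced}. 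For the piecewise-constant penalty \eqref{eq:piecewise_constant_penalty}, I would split the term by linearity of $\alpha$ and $\av{\cdot}$ into $\bigav{\alpha W_h^{\mathrm{tan}}} + \bigav{\alpha \widehat{V}_h^{\mathrm{tan}}}$: elements of $\widehat{V}_h^{\mathrm{tan}}$ are tangentially single-valued on interior facets, so scaling them degreewise by the constants $\alpha_e^k$ and averaging again yields single-valued elements of $\widehat{W}_h^{\mathrm{tan}}$ vanishing on $\partial\Omega$, whence $\bigav{\alpha\widehat{V}_h^{\mathrm{tan}}} \subset \ringhat{V}_h^{\mathrm{tan}}$ is absorbed; and since $W_h$ is broken, the $e^\pm$-restrictions of $W_h^{\mathrm{tan}}$ are independent, so scaling them by the (nonzero) constants $\alpha_{e^\pm}^k$ leaves the subspace unchanged, giving $\bigav{\alpha W_h^{\mathrm{tan}}} = \av{W_h^{\mathrm{tan}}}$ and hence \eqref{eq:ldg-h_strong_piecewise}.

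The one genuinely delicate point I anticipate is the bookkeeping in the strong-multisymplecticity step: making precise that, because the penalty enters the flux relation as a contribution to the orientation-dependent normal trace while acting on the orientation-independent tangential trace, its normal jump must be computed using the tangential-averaging formula. Everything else should reduce to direct citations of \cref{lem:ldg-h_flux,thm:strong.MS,lemma:strong.conservativity} together with elementary facts about how $\av{\cdot}$ interacts with scaling by nonzero constants and with projections onto facet trace spaces.
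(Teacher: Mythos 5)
Your proposal is correct and follows essentially the same route as the paper's proof: \cref{lem:ldg-h_flux} for local multisymplecticity, the identification of normal jump with tangential average applied to the flux relation \eqref{eq:ldg-h_znor} to get strong conservativity under \eqref{eq:ldg-h_strong}, and then \cref{lemma:strong.conservativity} and \cref{thm:strong.MS}, with the same absorption arguments for the two special penalties. The only cosmetic difference is in the piecewise-constant case, where you assert the equality $\bigav{\alpha W_h^{\mathrm{tan}}} = \av{W_h^{\mathrm{tan}}}$ under an unneeded nonvanishing assumption on the coefficients $\alpha^k_e$; the paper only establishes (and only needs) the inclusion $\bigav{\alpha W_h^{\mathrm{tan}}} \subset \av{W_h^{\mathrm{tan}}}$, which holds for arbitrary, possibly zero, constants.
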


\begin{proof}
  Since
  $ \widehat{ W } _h ^{\mathrm{nor}} = L ^2 \Lambda ( \partial
  \mathcal{T} _h ) $ satisfies the hypothesis of
  \cref{lem:ldg-h_flux}, regardless of how we choose the penalty and
  the finite element spaces, we immediately see that LDG-H is
  multisymplectic. Furthermore, since normal jump coincides with
  tangential average, \eqref{eq:ldg-h_strong} implies
  \begin{equation*}
    \llbracket \widehat{ z } _h ^{\mathrm{nor}} \rrbracket = \llbracket z _h ^{\mathrm{nor}} \rrbracket - \bigav{ \alpha ( \widehat{ z } _h ^{\mathrm{tan}} - z _h ^{\mathrm{tan}} ) } \in \ringhat{V} _h ^{\mathrm{tan}} .
  \end{equation*}
  Strong conservativity follows by \cref{lemma:strong.conservativity}
  and strong multisymplecticity by \cref{thm:strong.MS}.

  If $\alpha$ is piecewise constant, then
  $ \alpha _e \widehat{ W } _h ^{\mathrm{tan}} (e) \subset \widehat{ W
  } _h ^{\mathrm{tan}} (e) $ for all
  $ e \subset \partial \mathcal{T} _h $, so
  $ \alpha \widehat{ W } _h ^{\mathrm{tan}} \subset \widehat{ W } _h
  ^{\mathrm{tan}} $. Hence, the condition
  $ \av{ \alpha \widehat{ V } _h ^{\mathrm{tan}} } \subset \av{
    \widehat{ W } _h ^{\mathrm{tan}} } \subset \ringhat{V} _h
  ^{\mathrm{tan}} $ is always satisfied. Furthermore,
  $ \av{ \alpha W _h ^{\mathrm{tan}} } _e \subset \av{ W _h
    ^{\mathrm{tan}} } _e $ holds for all
  $ e \subset \partial \mathcal{T} _h $; we see this by writing
  $ \av{ \alpha z _h } _e = \av{ w _h } _e $, where
  $ w _h \rvert _K = \alpha _e z _h \rvert _K \in W _h (K) $ for
  $ e \subset \partial K $. Therefore,
  $ \av{ W _h ^{\mathrm{tan}} } \subset \ringhat{V} _h ^{\mathrm{tan}}
  $ implies
  $ \av{ \alpha W _h ^{\mathrm{tan}} } \subset \ringhat{V} _h
  ^{\mathrm{tan}} $. Altogether, this shows that
  \eqref{eq:ldg-h_strong_piecewise} suffices to ensure that
  \eqref{eq:ldg-h_strong} holds for all piecewise-constant $\alpha$.

  Finally, if $\alpha$ is the reduced-stabilization penalty, then
  $ \bigav{ \alpha ( W _h ^{\mathrm{tan}} + \widehat{ V } _h
    ^{\mathrm{tan}} ) } \subset \av{ \widehat{ W } _h ^{\mathrm{tan}}
  } \subset \ringhat{V} _h ^{\mathrm{tan}} $, so
  \eqref{eq:ldg-h_strong_reduced} suffices to ensure that
  \eqref{eq:ldg-h_strong} holds.
\end{proof}

\begin{corollary}
  \label{cor:equal-order}
  On a simplicial mesh with piecewise-constant penalty coefficients
  \eqref{eq:piecewise_constant_penalty} and equal-order finite element
  spaces \eqref{eq:equal_order}, the LDG-H method is strongly
  multisymplectic.
\end{corollary}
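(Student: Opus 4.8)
The plan is to invoke \cref{thm:ldg-h_ms} and then check that its hypothesis is met. Since the penalty coefficients are piecewise constant, \cref{thm:ldg-h_ms} reduces the claim to verifying the inclusion \eqref{eq:ldg-h_strong_piecewise}, namely $\jump{W_h^{\mathrm{nor}}} + \av{W_h^{\mathrm{tan}}} \subset \ringhat{V}_h^{\mathrm{tan}}$. Recall from \cref{def:jump_avg} and the remark following it that a normal jump $\jump{\widehat{w}^{\mathrm{nor}}}$ agrees on $e^+$ and $e^-$ across every interior facet and vanishes on $\partial\Omega$, and likewise for a tangential average $\av{\widehat{w}^{\mathrm{tan}}}$; in other words, both have zero \emph{tangential} jump on all of $\partial\mathcal{T}_h$. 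Hence both $\jump{W_h^{\mathrm{nor}}}$ and $\av{W_h^{\mathrm{tan}}}$ automatically lie in $\ringhat{V}_h^{\mathrm{tan}}$ \emph{provided} they are contained in $\widehat{W}_h^{\mathrm{tan}}$, and by the equal-order choice \eqref{eq:equal_order} this amounts to checking that, on each facet $e$, the degree-$k$ components of $W_h^{\mathrm{nor}}$ and $W_h^{\mathrm{tan}}$ lie in $\mathcal{P}_r\Lambda^k(e)$.

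First I would do the polynomial-degree bookkeeping for the traces of $W_h(K) = \mathcal{P}_r\Lambda(K)$ on a facet $e \subset \partial K$. Since the inclusion $e \hookrightarrow K$ is affine, pullback preserves polynomial degree, so $(w_h^k)^{\mathrm{tan}}\rvert_e = \operatorname{tr} w_h^k \in \mathcal{P}_r\Lambda^k(e)$. For the normal trace, I would use that the Euclidean Hodge star on $\mathbb{R}^n$ is a constant-coefficient operator, so $\star w_h^k \in \mathcal{P}_r\Lambda^{n-k}(K)$; its pullback to $e$ lies in $\mathcal{P}_r\Lambda^{n-k}(e)$ (which is $0$ when $k = 0$), and applying $\widehat{\star}^{-1}$—again constant-coefficient, as $e$ inherits the Euclidean metric—gives $(w_h^k)^{\mathrm{nor}}\rvert_e \in \mathcal{P}_r\Lambda^{k-1}(e)$. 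Thus $W_h^{\mathrm{tan}}(e)$ and $W_h^{\mathrm{nor}}(e)$ are both contained in $\mathcal{P}_r\Lambda(e) = \widehat{W}_h^{\mathrm{tan}}(e)$.

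Then I would assemble the two facts: on an interior facet $e = \partial K^+ \cap \partial K^-$, the normal jump $\jump{\widehat{w}^{\mathrm{nor}}}_{e^\pm} = \frac12\bigl(\widehat{w}^{\mathrm{nor}}\rvert_{e^+} + \widehat{w}^{\mathrm{nor}}\rvert_{e^-}\bigr)$ of a trace $\widehat{w}^{\mathrm{nor}} \in W_h^{\mathrm{nor}}$ is a sum of two elements of $\mathcal{P}_r\Lambda(e)$, hence again in $\mathcal{P}_r\Lambda(e) = \widehat{W}_h^{\mathrm{tan}}(e)$, and equals its $e^-$ value; on a boundary facet it is zero. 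The identical argument applies to $\av{W_h^{\mathrm{tan}}}$. Combining this with the first paragraph gives $\jump{W_h^{\mathrm{nor}}} + \av{W_h^{\mathrm{tan}}} \subset \ringhat{V}_h^{\mathrm{tan}}$, so \eqref{eq:ldg-h_strong_piecewise} holds and \cref{thm:ldg-h_ms} yields strong multisymplecticity. I expect the only mildly delicate point to be the degree count for the normal trace, which hinges on both the ambient and the facet Hodge star being constant-coefficient operators—a consequence of working with the Euclidean metric on $\Omega \subset \mathbb{R}^n$ and on its affine facets; everything else is immediate from \cref{def:jump_avg} and \eqref{eq:equal_order}.
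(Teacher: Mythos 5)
Your argument is correct and is exactly the (implicit) one the paper intends: the corollary is stated as an immediate consequence of \cref{thm:ldg-h_ms}, and verifying \eqref{eq:ldg-h_strong_piecewise} for the equal-order spaces comes down to the two observations you make—that normal jumps and tangential averages are tangentially single-valued and vanish on $\partial\Omega$, and that both the tangential and normal traces of $\mathcal{P}_r\Lambda(K)$ land in $\mathcal{P}_r\Lambda(e)=\widehat{W}_h^{\mathrm{tan}}(e)$ because affine pullback and the (constant-coefficient) Hodge stars preserve polynomial degree. Your degree bookkeeping for the normal trace is the only nontrivial step and it is carried out correctly.
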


For the semilinear $k$-form Hodge--Laplace problem,
\eqref{eq:ldg-h_weakform_w} becomes
\begin{alignat*}{2}
  ( \delta u _h , \tau _h ) _{ \mathcal{T} _h } + \bigl\langle \alpha ^{ k -1 } ( \widehat{ \sigma } _h ^{\mathrm{tan}} - \sigma _h ^{\mathrm{tan}} ) , \tau _h ^{\mathrm{tan}} \bigr\rangle _{ \partial \mathcal{T} _h } &= ( \sigma _h , \tau _h ) _{ \mathcal{T} _h } , &\forall \tau _h &\in W _h ^{ k -1 } , \\
  ( \sigma _h , \delta v _h ) _{ \mathcal{T} _h } +   ( \delta \rho _h , v _h ) _{ \mathcal{T} _h } + \langle \widehat{ \sigma } _h ^{\mathrm{tan}} , v _h ^{\mathrm{nor}} \rangle _{ \partial \mathcal{T} _h } + \bigl\langle \alpha ^k ( \widehat{ u } _h ^{\mathrm{tan}} - u _h ^{\mathrm{tan}} ) , v _h ^{\mathrm{tan}} \bigr\rangle _{ \partial \mathcal{T} _h } &= \biggl( \frac{ \partial F }{ \partial u _h } , v _h \biggr) _{ \mathcal{T} _h } ,\  &\forall v _h &\in W _h ^k ,\\
  ( u _h , \delta \eta _h ) _{ \mathcal{T} _h } + \langle \widehat{ u } _h ^{\mathrm{tan}} , \eta _h ^{\mathrm{nor}} \rangle _{ \partial \mathcal{T} _h } &= ( \rho _h , \eta _h ) _{ \mathcal{T} _h } , & \forall \eta _h &\in W _h ^{ k + 1 } ,
\end{alignat*}
while \eqref{eq:ldg-h_weakform_wtan} becomes
\begin{alignat*}{3}
  \bigl\langle u _h ^{\mathrm{nor}} &{}-{}& \alpha ^{ k -1 } ( \widehat{ \sigma } _h ^{\mathrm{tan}} - \sigma _h ^{\mathrm{tan}} ) , \widehat{ \tau } _h ^{\mathrm{tan}} \bigr\rangle _{ \partial \mathcal{T} _h } &= 0, \quad &\forall \widehat{ \tau } _h ^{\mathrm{tan}} &\in \ringhat{V} _h ^{k-1, \mathrm{tan}} ,\\
  \bigl\langle \rho _h ^{\mathrm{nor}} &{}-{}& \alpha ^k ( \widehat{ u } _h ^{\mathrm{tan}} - u _h ^{\mathrm{tan}} ) , \widehat{ v } _h ^{\mathrm{tan}} \bigr\rangle _{ \partial \mathcal{T} _h } &= 0, \quad &\forall \widehat{ v } _h ^{\mathrm{tan}} &\in \ringhat{V} _h ^{k, \mathrm{tan}} .
\end{alignat*}
In the linear case, this agrees (modulo boundary conditions and
harmonic forms) with the LDG-H methods proposed in \citet[Section
8.2.3]{AwFaGuSt2023}---and, when $ \alpha = 0 $, with certain mixed
and nonconforming hybrid methods in \citep[Section
8.2.2]{AwFaGuSt2023}.

\subsubsection{LDG-H methods for scalar problems}
\label{sec:ldg-h_scalar}

When $ k = 0 $ and $ k = n $, this coincides with several known
methods for the scalar Poisson equation, whose strong
multisymplecticity for de~Donder--Weyl systems more generally was
established in \citet{McSt2020}.

For $ k = 0 $, we recover the original LDG-H method in
\citet{CoGoLa2009}. The hypotheses of \cref{thm:ldg-h_ms} are
satisfied by taking
$ \widehat{ W } _h ^{0,\mathrm{tan}} (e) = \mathcal{P} _r \Lambda ^0
(e) $ and any of
\begin{subequations}
  \label{eq:ldg-h_spaces_k=0}
  \begin{alignat}{2}
    W _h ^0 (K) &= \mathcal{P} _r \Lambda ^0 (K) , \qquad & W _h ^1 (K) &= \mathcal{P} _r \Lambda ^1 (K) ,\\
    W _h ^0 (K) &= \mathcal{P} _r \Lambda ^0 (K) , \qquad & W _h ^1 (K) &= \mathcal{P} _{r-1} \Lambda ^1 (K) ,\label{eq:k=0_r_r-1}\\
    W _h ^0 (K) &= \mathcal{P} _{r-1} \Lambda ^0 (K) , \qquad & W _h ^1 (K) &= \mathcal{P} _r \Lambda ^1 (K) , \label{eq:k=0_r-1_r}
  \end{alignat}
\end{subequations}
with piecewise-constant penalty coefficients $ \alpha ^0 $. In the
special case $ \alpha ^0 = 0 $, we obtain the BDM-H method by taking
the function spaces \eqref{eq:k=0_r-1_r} and the RT-H method by
taking
\begin{equation*}
  W _h ^0 (K) = \mathcal{P} _r \Lambda ^0 (K) , \qquad W _h ^1 (K) = \star \mathcal{P} _{r+1} ^- \Lambda ^{ n -1 } (K) ,
\end{equation*}
both of which can be seen as Hodge duals of the corresponding
$ k = n $ AFW-H methods.

\begin{remark}
  \label{rmk:reduced_k=0}
  For the above spaces, there is no difference between the
  piecewise-constant and reduced-stabilization penalties: the relevant
  traces are already in
  $ \widehat{ W } _h ^{0, \mathrm{tan}} (e) = \mathcal{P} _r \Lambda ^0
  (e) $, so projection has no effect. However, the
  reduced-stabilization penalty lets us take \eqref{eq:k=0_r_r-1}
  with
  $ \widehat{ W } _h ^{0, \mathrm{tan}} (e) = \mathcal{P} _{ r -1 }
  \Lambda ^0 (e) $, precisely as in \citet{Lehrenfeld2010,LeSc2016}
  and \citet{Oikawa2015,Oikawa2016}, thereby reducing the number of
  global degrees of freedom.
\end{remark}

For $ k = n $, we get the alternative implementation of LDG-H in
\citet{Cockburn2016}, using local Neumann solvers rather than
Dirichlet solvers. (In the linear case, well-posedness of these local
solvers is achieved using a space of piecewise-constant ``locally
harmonic'' $n$-forms, which is generalized to $k$-forms in
\citep{AwFaGuSt2023}.) We take
$ \widehat{ W } _h ^{ n -1 , \mathrm{tan} } ( e ) = \mathcal{P} _r
\Lambda ^{ n -1 } (e) $ and any of
\begin{subequations}
  \label{eq:ldg-h_spaces_k=n}
  \begin{alignat}{2}
    W _h ^{n-1} (K) &= \mathcal{P} _r \Lambda ^{n-1} (K) , \qquad & W _h ^n (K) &= \mathcal{P} _r \Lambda ^n (K) ,\\
    W _h ^{n-1} (K) &= \mathcal{P} _{r-1} \Lambda ^{n-1} (K) , \qquad & W _h ^n (K) &= \mathcal{P} _r \Lambda ^n (K) , \label{eq:k=n_r-1_r} \\
    W _h ^{n-1} (K) &= \mathcal{P} _r \Lambda ^{n-1} (K) , \qquad & W _h ^n (K) &= \mathcal{P} _{r-1} \Lambda ^n (K) , \label{eq:k=n_r_r-1}
  \end{alignat}
\end{subequations}
which are the respective Hodge duals of \eqref{eq:ldg-h_spaces_k=0},
with piecewise-constant $ \alpha ^{n-1} $.  As in
\cref{rmk:reduced_k=0}, the reduced-stabilization penalty allows us to
take \eqref{eq:k=n_r_r-1} with
$ \widehat{ W } _h ^{n -1 , \mathrm{tan}} (e) = \mathcal{P} _{r - 1}
\Lambda ^{ n -1 } (e) $.

\begin{remark}
  In the special case $ \alpha ^{n-1} = 0 $, choosing
  \eqref{eq:k=n_r-1_r} with
  $ \widehat{ W } _h ^{n -1 , \mathrm{tan}} (e) = \mathcal{P} _{r - 1}
  \Lambda ^{ n -1 } (e) $ recovers the nonconforming primal-hybrid
  method of \citet{RaTh1977_hybrid}. However, this hybridization of
  the method is not strongly conservative, since
  $ \bigl\llbracket (W _h ^n) ^{\mathrm{nor}} \bigr\rrbracket
  \not\subset \ringhat{V} _h ^{n-1, \mathrm{tan}} $. It is equivalent
  to a strongly conservative hybrid method, called NC-H, if we take an
  alternative hybridization with
  $ \widehat{ W } _h ^{\mathrm{nor}} = \widehat{ W } _h
  ^{\mathrm{tan}} $, as in \cref{rmk:reduced_wnor}. We will examine
  such methods further in \cref{sec:reduced}.
\end{remark}

\subsubsection{LDG-H methods for vector problems}

The $ k = 1 $ case of LDG-H includes several proposed HDG methods for
Maxwell's equations \citep{NgPeCo2011,ChQiShSo2017,ChQiSh2018} and for
the vorticity-velocity-pressure formulation of Stokes flow
\citep{CoGo2009}. (Recall the canonical structure of these problems
from \cref{sec:multisymplectic}.) It follows from \cref{thm:ldg-h_ms}
that these methods are multisymplectic.

In particular, the method of \citet*{NgPeCo2011} is obtained with the
equal-order spaces \eqref{eq:equal_order}, whereas the method of
\citet*{ChQiSh2018} is recovered with the choices
\begin{alignat*}{2}
  W _h ^0 (K) &= \mathcal{P} _{r+1} \Lambda ^0 (K) , \qquad
  &W _h ^1 (K) &= \mathcal{P} _r \Lambda ^1 (K), \qquad
  W _h ^2 (K) = \mathcal{P} _r \Lambda ^2 (K) ,\\
  \widehat{ W } _h ^{0, \mathrm{tan}} (e) &= \mathcal{P} _{r+1} \Lambda ^0 (e) ,
  \qquad &\widehat{ W } _h ^{1, \mathrm{tan}} (e) &= \mathcal{P} _r \Lambda ^1 (e).
\end{alignat*}
These clearly satisfy the hypotheses of \cref{thm:ldg-h_ms} when
$\alpha$ is piecewise constant.

Finally, the method of \citet*{ChQiShSo2017} takes the function spaces
\begin{alignat*}{2}
  W _h ^0 (K) &= \mathcal{P} _r \Lambda ^0 (K) , \qquad
  & W _h ^1 (K) &= \mathcal{P} _{ r + 1 } \Lambda ^1 (K) , \qquad
  W _h ^2 (K) = \mathcal{P} _r \Lambda ^2 (K) ,\\
  \widehat{ W } _h ^{0,\mathrm{tan}} (e) &= \mathcal{P} _{ r + 1 } \Lambda ^0 ( e ), \qquad
  & \widehat{ W } _h ^{1, \mathrm{tan}} (e) &= \mathcal{P} _r \Lambda ^1 (e) \oplus \mathrm{d} \mathcal{H} _{ r + 2 } \Lambda ^0 (e) ,
\end{alignat*}
where $ \mathcal{H} _{ r + 2 } \Lambda ^0 (e) $ is the space of
homogeneous degree-$(r+2)$ polynomial $0$-forms on $e$, and takes
$\alpha$ to be the reduced-stabilization penalty. This method is also
strongly multisymplectic by \cref{thm:ldg-h_ms}. Note that the
projection is essential for this to hold, since it ensures that
$ \alpha W _h ^{\mathrm{tan}} \subset \widehat{ W } _h ^{\mathrm{tan}}
$ even though
$ W _h ^{1, \mathrm{tan}} \not\subset \widehat{ W } _h ^{1,
  \mathrm{tan}} $.

\subsubsection{The extended Galerkin (XG) method}
\label{sec:xg}
\citet*{HoLiXu2022} recently introduced an \emph{extended Galerkin}
(XG) method for the Hodge--Laplace problem. In its most general form,
the XG method is not an HDG method; however, it is hybridizable for
certain choices of parameters \citep[Section 6]{HoLiXu2022}, and in
these cases, we will see that it is an LDG-H method.

\begin{remark}
  The hybridization procedure in \citep[Section 6]{HoLiXu2022}
  eliminates tangential traces to get a system only involving normal
  traces---whereas in our approach, as in \citep{AwFaGuSt2023}, the
  roles of tangential and normal traces are switched. One can easily
  pass back and forth between these two approaches by taking the Hodge
  star, and they are ultimately equivalent.
\end{remark}

The XG method may be extended from the Hodge--Laplace problem to
canonical PDEs more generally as follows. In addition to the broken
space $ W _h $, we specify two broken ``check'' spaces
\begin{equation*}
  \check{W} _h ^{\mathrm{nor}} \coloneqq \prod _{ e \subset \partial \mathcal{T} _h } \check{W} _h ^{\mathrm{nor}} (e) , \qquad \check{W} _h ^{\mathrm{tan}} \coloneqq \prod _{ e \subset \partial \mathcal{T} _h } \check{W} _h ^{\mathrm{tan}} (e) ,
\end{equation*}
where
$ \check{W} _h ^{\mathrm{nor}} (e) , \check{W} _h ^{\mathrm{tan}} (e)
\subset L ^2 \Lambda (e) $. Define the normally and tangentially
single-valued subspaces
\begin{align*}
  \check{ V } _h ^{\mathrm{nor}} &\coloneqq \bigl\{ \check{w} _h ^{\mathrm{nor}} \in \check{ W } _h ^{\mathrm{nor}} : \llbracket \check{w} _h ^{\mathrm{nor}} \rrbracket = 0 \bigr\} ,\\
  \mathring{\check{V}} _h ^{\mathrm{tan}} &\coloneqq \bigl\{ \check{w} _h ^{\mathrm{tan}} \in \check{ W } _h ^{\mathrm{tan}} : \llbracket \check{w} _h ^{\mathrm{tan}} \rrbracket = 0 \bigr\},\\
  \check{V} _h ^{\mathrm{tan}} &\coloneqq \bigl\{ \check{w} _h ^{\mathrm{tan}} \in \check{ W } _h ^{\mathrm{tan}} : \llbracket \check{w} _h ^{\mathrm{tan}} \rrbracket = 0 \text{ on } \partial \mathcal{T} _h \setminus \partial \Omega \bigr\}.
\end{align*}
The XG method seeks
$ ( z _h , \check{z} _h ^{\mathrm{nor}} , \check{z} _h ^{\mathrm{tan}}
) \in W _h \times \check{V} _h ^{\mathrm{nor}} \times \check{V} _h
^{\mathrm{tan}} $ such that
\begin{subequations}
  \label{eq:xg}
  \begin{alignat}{2}
    ( z _h , \mathrm{D} w _h ) _{ \mathcal{T} _h } + [ \widehat{ z } _h , w _h ] _{ \partial \mathcal{T} _h } &= \bigl( f ( z _h ) , w _h \bigr) _{ \mathcal{T} _h } , \quad &\forall w _h &\in W _h , \label{eq:xg_w} \\
    \bigl\langle \check{z} _h ^{\mathrm{nor}} - \alpha \llbracket z _h ^{\mathrm{tan}} \rrbracket , \check{w} _h ^{\mathrm{nor}} \bigr\rangle _{ \partial \mathcal{T} _h } + \langle \alpha \check{z} _h ^{\mathrm{tan}} , \check{w} _h ^{\mathrm{nor}} \rangle _{ \partial \Omega } &= 0, \quad &\forall \check{w} _h ^{\mathrm{nor}} &\in \check{V} _h ^{\mathrm{nor}} , \label{eq:xg_wnor} \\
    \bigl\langle \check{z} _h ^{\mathrm{tan}} - \beta \llbracket z _h ^{\mathrm{nor}} \rrbracket , \check{w} _h ^{\mathrm{tan}} \bigr\rangle _{ \partial \mathcal{T} _h }  &= 0, \quad &\forall \check{w} _h ^{\mathrm{tan}} &\in \mathring{\check{V}} _h ^{\mathrm{tan}} \label{eq:xg_wtan},
  \end{alignat}
\end{subequations}
where $ \alpha ^k _e , \beta ^k _e \in \mathbb{R} $ are single-valued
penalty parameters for each $k = 0 , \ldots, n -1 $ and
$ e \subset \partial \mathcal{T} _h $, i.e.,
$ \alpha ^k _{ e ^+ } = \alpha ^k _{ e ^- } $ and
$ \beta ^k _{e ^+} = \beta ^k _{ e^- } $ at interior facets, and where
the numerical traces are
\begin{equation*}
  \widehat{ z } _h ^{\mathrm{nor}} \coloneqq \av{ z _h ^{\mathrm{nor}} } + \check{z} _h ^{\mathrm{nor}} , \qquad \widehat{ z } _h ^{\mathrm{tan}} \coloneqq \av{ z _h ^{\mathrm{tan}} } + \check{z} _h ^{\mathrm{tan}} .
\end{equation*}
Following \citep[Equations 3.13 and 6.7]{HoLiXu2022}, we assume that
\begin{subequations}
  \label{eq:xg_inclusions}
  \begin{align}
    \llbracket W _h ^{\mathrm{tan}} + \check{V} _h ^{\mathrm{tan}} \rrbracket &\subset \check{V} _h ^{\mathrm{nor}} , \label{eq:xg_inclusion_nor}\\
  \llbracket W _h ^{\mathrm{nor}} \rrbracket + \av{ W _h ^{\mathrm{tan}} } &\subset \mathring{\check{V}} _h ^{\mathrm{tan}} , \label{eq:xg_inclusion_tan}
  \end{align}
\end{subequations}
which implies in particular that
\eqref{eq:xg_wnor}--\eqref{eq:xg_wtan} hold strongly, i.e.,
\begin{equation*}
  \check{z} _h ^{\mathrm{nor}} = - \alpha \llbracket \check{z} _h ^{\mathrm{tan}} - z _h ^{\mathrm{tan}} \rrbracket ,
  \qquad \av{\check{z} _h ^{\mathrm{tan}}} = \beta \llbracket z _h ^{\mathrm{nor}} \rrbracket .
\end{equation*}
Since $ \check{z} _h ^{\mathrm{nor}} $ and
$ \check{z} _h ^{\mathrm{tan}} $ are single-valued, this means that
\begin{equation*}
  \check{z} _h ^{\mathrm{nor}} =
  \begin{cases}
    \alpha \llbracket z _h ^{\mathrm{tan}} \rrbracket & \text{on } \partial \mathcal{T} _h \setminus \partial \Omega ,\\
    - \alpha \check{z} _h ^{\mathrm{tan}} & \text{on } \partial \Omega ,
  \end{cases} \qquad \check{z} _h ^{\mathrm{tan}} = \beta \llbracket z _h ^{\mathrm{nor}} \rrbracket \quad \text{on } \partial \mathcal{T} _h \setminus \partial \Omega ,
\end{equation*}
where $ \check{z} _h ^{\mathrm{tan}} $ is free on $ \partial \Omega $
to allow for arbitrary boundary conditions. Note that
\eqref{eq:xg_inclusion_tan} corresponds to the strong conservativity
condition \eqref{eq:ldg-h_strong_piecewise} for LDG-H.

\begin{remark}
  Since $ \check{V} _h ^{\mathrm{tan}} $ is single-valued, the
  inclusion
  $ \llbracket \check{V} _h ^{\mathrm{tan}} \rrbracket \subset
  \check{V} _h ^{\mathrm{nor}} $ in \eqref{eq:xg_inclusion_nor} only
  imposes conditions at the boundary. Specifically,
  $ \llbracket \check{z} _h ^{\mathrm{tan}} \rrbracket $ is the
  extension by zero of the boundary values
  $ \check{z} _h ^{\mathrm{tan}} \rvert _{ \partial \Omega } $, and
  this needs to be in $ \check{W} _h ^{\mathrm{nor}} $ to enforce
  \eqref{eq:xg_wnor} strongly on $ \partial \Omega $. Therefore, a
  necessary and sufficient condition for this inclusion is
  $ \check{W} _h ^{\mathrm{tan}} (e) \subset \check{W} _h
  ^{\mathrm{nor}} (e) $ for all $ e \subset \partial \Omega $.
\end{remark}

The following generalizes the discussion of hybridizable XG methods
from \citep[Section 6]{HoLiXu2022} for the Hodge--Laplace problem,
extending it to the more general class of problems considered here. We
note that factors of $ \frac{ 1 }{ 4 } $ appear in \citep{HoLiXu2022}
that do not appear in our calculations, since these are absorbed by
the $ \frac{1}{2} $ factor in \cref{def:jump_avg} of normal and
tangential jump.

\begin{theorem}
  \label{thm:xg_ldg-h}
  If $ \alpha \beta = 1 $ at interior facets (i.e.,
  $ \alpha ^k _e \beta _e ^k = 1 $) and \eqref{eq:xg_inclusions}
  holds, then the XG method is equivalent to the corresponding LDG-H
  method with penalty $\alpha$ and
  $ \widehat{ V } _h ^{\mathrm{tan}} = \check{V} _h ^{\mathrm{tan}} $.
\end{theorem}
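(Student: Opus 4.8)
The plan is to build an explicit correspondence between solutions of the two schemes that keeps the volumetric unknown $z_h$ fixed and identifies the LDG-H numerical traces $\widehat{z}_h^{\mathrm{nor}},\widehat{z}_h^{\mathrm{tan}}$ with the XG numerical traces $\av{z_h^{\mathrm{nor}}}+\check{z}_h^{\mathrm{nor}}$ and $\av{z_h^{\mathrm{tan}}}+\check{z}_h^{\mathrm{tan}}$ defined just after \eqref{eq:xg}. The payoff of this reformulation is that, once the numerical traces are matched, \eqref{eq:weakform_w} and \eqref{eq:xg_w} are literally the same equation $(z_h,\mathrm{D}w_h)_{\mathcal{T}_h}+[\widehat{z}_h,w_h]_{\partial\mathcal{T}_h}=(f(z_h),w_h)_{\mathcal{T}_h}$; so the content of the theorem is reduced to (a) matching the constrained tangential trace spaces, which is the hypothesis $\widehat{V}_h^{\mathrm{tan}}=\check{V}_h^{\mathrm{tan}}$, and (b) showing that the remaining flux/conservativity equations of the two methods transform into one another.

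I would begin by unpacking the hypotheses. As already noted in the text, \eqref{eq:xg_inclusions} forces \eqref{eq:xg_wnor}--\eqref{eq:xg_wtan} to hold strongly, so an XG solution satisfies $\check{z}_h^{\mathrm{nor}}=-\alpha\jump{\check{z}_h^{\mathrm{tan}}-z_h^{\mathrm{tan}}}$ and $\av{\check{z}_h^{\mathrm{tan}}}=\beta\jump{z_h^{\mathrm{nor}}}$; dually, since $\widehat{W}_h^{\mathrm{nor}}=L^2\Lambda(\partial\mathcal{T}_h)$ for LDG-H, equation \eqref{eq:weakform_wnor} holds strongly and is exactly \eqref{eq:ldg-h_znor}. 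Moreover \eqref{eq:xg_inclusion_tan} is precisely the strong-conservativity condition \eqref{eq:ldg-h_strong_piecewise}, so by the argument in \cref{thm:ldg-h_ms} together with \cref{lemma:strong.conservativity}, every LDG-H solution considered here has $\jump{\widehat{z}_h^{\mathrm{nor}}}=0$.

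The one genuinely computational ingredient is matching the two formulas for $\widehat{z}_h^{\mathrm{nor}}$. Substituting $z_h^{\mathrm{tan}}=\av{z_h^{\mathrm{tan}}}+\jump{z_h^{\mathrm{tan}}}$ and $\widehat{z}_h^{\mathrm{tan}}=\av{z_h^{\mathrm{tan}}}+\check{z}_h^{\mathrm{tan}}$ into \eqref{eq:ldg-h_znor} rearranges it to $\widehat{z}_h^{\mathrm{nor}}=\av{z_h^{\mathrm{nor}}}+\jump{z_h^{\mathrm{nor}}}+\alpha\jump{z_h^{\mathrm{tan}}}-\alpha\check{z}_h^{\mathrm{tan}}$. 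On interior facets the single-valued trace $\check{z}_h^{\mathrm{tan}}$ equals $\av{\check{z}_h^{\mathrm{tan}}}=\beta\jump{z_h^{\mathrm{nor}}}$, hence $\alpha\check{z}_h^{\mathrm{tan}}=\alpha\beta\jump{z_h^{\mathrm{nor}}}=\jump{z_h^{\mathrm{nor}}}$ because $\alpha\beta=1$; the two middle terms cancel and $\widehat{z}_h^{\mathrm{nor}}=\av{z_h^{\mathrm{nor}}}+\alpha\jump{z_h^{\mathrm{tan}}}=\av{z_h^{\mathrm{nor}}}+\check{z}_h^{\mathrm{nor}}$ by the strong form of \eqref{eq:xg_wnor}; the boundary case, where $\jump{z_h^{\mathrm{nor}}}=0$, $\av{z_h^{\mathrm{nor}}}=z_h^{\mathrm{nor}}$, and $\jump{z_h^{\mathrm{tan}}}=z_h^{\mathrm{tan}}$, is handled the same way. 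This is where the condition $\alpha\beta=1$ is used, and I expect it -- and the companion bookkeeping that runs through the whole argument -- to be the main, if brief, obstacle: one has to keep track of the factor $\frac{1}{2}$ in \cref{def:jump_avg} and, more subtly, of which of the two ``single-valued'' parities (sign-flipping for normal-type traces, sign-preserving for tangential-type) each term carries; in particular, the orientation-independent part of the normal-type quantity $\alpha(\widehat{z}_h^{\mathrm{tan}}-z_h^{\mathrm{tan}})$ appearing inside $\widehat{z}_h^{\mathrm{nor}}$ is a tangential \emph{average}, not a tangential jump, which is the very mechanism producing $\jump{\widehat{z}_h^{\mathrm{nor}}}=0$.

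With these pieces the two directions fall out. Forward: given an XG solution, set $\widehat{z}_h^{\mathrm{tan}}:=\av{z_h^{\mathrm{tan}}}+\check{z}_h^{\mathrm{tan}}$, which lies in $\check{V}_h^{\mathrm{tan}}=\widehat{V}_h^{\mathrm{tan}}$ by \eqref{eq:xg_inclusion_tan}, and let $\widehat{z}_h^{\mathrm{nor}}$ be given by \eqref{eq:ldg-h_znor}; the computation above identifies it with $\av{z_h^{\mathrm{nor}}}+\check{z}_h^{\mathrm{nor}}$, so \eqref{eq:xg_w} becomes \eqref{eq:weakform_w}, \eqref{eq:weakform_wnor} holds by construction, and \eqref{eq:weakform_wtan} holds because $\jump{\widehat{z}_h^{\mathrm{nor}}}=0$ makes \cref{prop:average-jump} annihilate the pairing of $\widehat{z}_h^{\mathrm{nor}}$ with $\ringhat{V}_h^{\mathrm{tan}}$. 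Conversely, given an LDG-H solution, strong conservativity gives $\jump{\widehat{z}_h^{\mathrm{nor}}}=0$; put $\check{z}_h^{\mathrm{tan}}:=\widehat{z}_h^{\mathrm{tan}}-\av{z_h^{\mathrm{tan}}}\in\check{V}_h^{\mathrm{tan}}$ and $\check{z}_h^{\mathrm{nor}}:=\widehat{z}_h^{\mathrm{nor}}-\av{z_h^{\mathrm{nor}}}$, check $\check{z}_h^{\mathrm{nor}}\in\check{V}_h^{\mathrm{nor}}$ (both summands have vanishing normal jump, on interior facets $\check{z}_h^{\mathrm{nor}}=\alpha\jump{z_h^{\mathrm{tan}}}\in\jump{W_h^{\mathrm{tan}}}\subset\check{V}_h^{\mathrm{nor}}$ by \eqref{eq:xg_inclusion_nor}, and on $\partial\Omega$ one uses $\check{W}_h^{\mathrm{tan}}(e)\subset\check{W}_h^{\mathrm{nor}}(e)$), and then read the strong forms of \eqref{eq:xg_wnor}--\eqref{eq:xg_wtan} off \eqref{eq:ldg-h_znor} and $\jump{\widehat{z}_h^{\mathrm{nor}}}=0$. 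The two assignments are mutually inverse, which gives the asserted equivalence.
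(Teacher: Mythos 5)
Your proposal is correct and follows essentially the same route as the paper's proof: both directions rest on the strong forms of the flux/conservativity relations guaranteed by \eqref{eq:xg_inclusions}, the average--jump decomposition of $\widehat{z}_h - z_h$, and the cancellation $\alpha\beta\jump{z_h^{\mathrm{nor}}} = \jump{z_h^{\mathrm{nor}}}$ that identifies $\widehat{z}_h^{\mathrm{nor}} = \av{z_h^{\mathrm{nor}}} + \check{z}_h^{\mathrm{nor}}$ with the LDG-H flux relation \eqref{eq:ldg-h_znor}. The only cosmetic difference is that you solve \eqref{eq:ldg-h_znor} for $\widehat{z}_h^{\mathrm{nor}}$ and match it to the XG trace, whereas the paper computes $\widehat{z}_h^{\mathrm{nor}} - z_h^{\mathrm{nor}}$ directly in average/jump components; the algebra is the same.
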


\begin{proof}
  If
  $ ( z _h , \check{z} _h ^{\mathrm{nor}}, \check{z} _h
  ^{\mathrm{tan}} ) $ is an XG solution, then
  $ \widehat{ z } _h ^{\mathrm{nor}} $ and
  $ \widehat{ z } _h ^{\mathrm{tan}} $ are single-valued and satisfy
  \begin{align*}
    \widehat{ z } _h ^{\mathrm{nor}} - z _h ^{\mathrm{nor}}
    &= \av{ \widehat{ z } _h ^{\mathrm{nor}} - z _h ^{\mathrm{nor}} } + \llbracket \widehat{ z } _h ^{\mathrm{nor}} - z _h ^{\mathrm{nor}} \rrbracket \\
    &= \check{z} _h ^{\mathrm{nor}} - \llbracket z _h ^{\mathrm{nor}} \rrbracket \\
    &= - \alpha \llbracket \check{z} _h ^{\mathrm{tan}} - z _h ^{\mathrm{tan}} \rrbracket - \alpha \av{ \check{z} _h ^{\mathrm{tan}} } \\
    &= - \alpha \bigl( \llbracket \widehat{ z } _h ^{\mathrm{tan}} - z _h ^{\mathrm{tan}} \rrbracket + \av { \widehat{ z } _h ^{\mathrm{tan}} - z _h ^{\mathrm{tan}} } \bigr) \\
    &= - \alpha \bigl( \widehat{ z } _h ^{\mathrm{tan}} - z _h ^{\mathrm{tan}} \bigr) .
  \end{align*}
  Since \eqref{eq:xg_inclusion_tan} implies
  $ \widehat{ z } _h ^{\mathrm{tan}} \in \check{V} _h ^{\mathrm{tan}}
  = \widehat{ V } _h ^{\mathrm{tan}} $, it follows that
  $ ( z _h , \widehat{ z } _h ^{\mathrm{nor}} , \widehat{ z } _h
  ^{\mathrm{tan}} ) $ is an LDG-H solution.

  Conversely, suppose
  $ ( z _h , \widehat{ z } _h ^{\mathrm{nor}} , \widehat{ z } _h
  ^{\mathrm{tan}} ) $ is an LDG-H solution, so in particular
  \begin{align*}
    \widehat{ z } _h ^{\mathrm{nor}} - z _h ^{\mathrm{nor}} &= - \alpha ( \widehat{ z } _h ^{\mathrm{tan}} - z _h ^{\mathrm{tan}} ) \quad \text{on } \partial \mathcal{T} _h ,\\
    \widehat{ z } _h ^{\mathrm{tan}} - z _h ^{\mathrm{tan}} &= - \beta ( \widehat{ z } _h ^{\mathrm{nor}} - z _h ^{\mathrm{nor}} ) \quad \text{on } \partial \mathcal{T} _h \setminus \partial \Omega 
  \end{align*}
  We have strong conservativity, by \eqref{eq:xg_inclusion_nor} and
  \cref{lemma:strong.conservativity}, so
  $ \widehat{ z } _h ^{\mathrm{nor}} $ and
  $ \widehat{ z } _h ^{\mathrm{tan}} $ are both
  single-valued. Therefore, so are
  \begin{equation*}
    \check{z} _h ^{\mathrm{nor}} \coloneqq \widehat{ z } _h ^{\mathrm{nor}} - \av{ z _h ^{\mathrm{nor}} } , \qquad \check{z} _h ^{\mathrm{tan}} \coloneqq \widehat{ z } _h ^{\mathrm{tan}} - \av{ z _h ^{\mathrm{tan}} } ,
  \end{equation*}
  and \eqref{eq:xg_inclusion_tan} implies
  $ \check{z} _h ^{\mathrm{tan}} \in \check{V} _h ^{\mathrm{tan}}
  $. Since normal average coincides with tangential jump, observe that
  \begin{equation*}
    \check{z} _h ^{\mathrm{nor}} = \av{ \widehat{ z } _h ^{\mathrm{nor}} - z _h ^{\mathrm{nor}} } = - \alpha \llbracket \widehat{ z } _h ^{\mathrm{tan}} - z _h ^{\mathrm{tan}} \rrbracket = - \alpha \llbracket \check{z} _h ^{\mathrm{tan}} - z _h ^{\mathrm{tan}} \rrbracket \in \check{V} _h ^{\mathrm{nor}} ,
  \end{equation*}
  where the inclusion is by \eqref{eq:xg_inclusion_nor}. Similarly,
  since tangential average coincides with normal jump,
  \begin{equation*}
    \av{ \check{z} _h ^{\mathrm{tan}} } = \av{ \widehat{ z } _h ^{\mathrm{tan}} - z _h ^{\mathrm{tan}} } = - \beta \llbracket \widehat{ z } _h ^{\mathrm{nor}} - z _h ^{\mathrm{nor}} \rrbracket = \beta \llbracket z _h ^{\mathrm{nor}} \rrbracket .
  \end{equation*}
  Therefore,
  $ ( z _h , \check{z} _h ^{\mathrm{nor}} , \check{z} _h
  ^{\mathrm{tan}} ) $ is an XG solution.
\end{proof}

\begin{corollary}
  With the hypotheses of \cref{thm:xg_ldg-h}, the XG method is
  strongly multisymplectic.
\end{corollary}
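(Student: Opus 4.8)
The plan is to reduce the statement to the two theorems already established, with no new computation required. By \cref{thm:xg_ldg-h}, under the standing hypotheses ($\alpha\beta = 1$ at interior facets and \eqref{eq:xg_inclusions}) the XG method is equivalent to the LDG-H method with penalty $\alpha$ and with $\widehat{V}_h^{\mathrm{tan}} = \check{V}_h^{\mathrm{tan}}$. Since strong multisymplecticity is a property of the solution set, which the equivalence preserves, it suffices to show that this particular LDG-H method is strongly multisymplectic, and for that we invoke \cref{thm:ldg-h_ms}.

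First I would observe that the XG penalty $\alpha$, whose coefficients $\alpha^k_e \in \mathbb{R}$ are single-valued, is in particular a piecewise-constant penalty in the sense of \eqref{eq:piecewise_constant_penalty} (boundedness and symmetry of multiplication by real scalars being immediate). Hence, by the piecewise-constant case of \cref{thm:ldg-h_ms}, the LDG-H method is strongly multisymplectic provided the condition \eqref{eq:ldg-h_strong_piecewise} holds, i.e., $\llbracket W_h^{\mathrm{nor}} \rrbracket + \av{W_h^{\mathrm{tan}}} \subset \ringhat{V}_h^{\mathrm{tan}}$.

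Next I would identify the single-valued trace space of this LDG-H instance. Since the equivalence in \cref{thm:xg_ldg-h} is set up with broken tangential trace space $\check{W}_h^{\mathrm{tan}}$ and $\widehat{V}_h^{\mathrm{tan}} = \check{V}_h^{\mathrm{tan}}$, the corresponding fully single-valued subspace is $\ringhat{V}_h^{\mathrm{tan}} = \mathring{\check{V}}_h^{\mathrm{tan}}$, the space of tangential traces with vanishing jump on all of $\partial\mathcal{T}_h$. With this identification, \eqref{eq:ldg-h_strong_piecewise} reads exactly as the inclusion \eqref{eq:xg_inclusion_tan}, which is part of the assumed \eqref{eq:xg_inclusions}. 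Applying \cref{thm:ldg-h_ms} therefore yields strong multisymplecticity of the LDG-H method, and hence—by the equivalence of \cref{thm:xg_ldg-h}—of the XG method.

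The only point requiring a bit of care is the bookkeeping in the last two steps: one must confirm that the LDG-H method arising from \cref{thm:xg_ldg-h} genuinely has $\check{W}_h^{\mathrm{tan}}$ as its broken tangential trace space, so that its single-valued subspace is $\mathring{\check{V}}_h^{\mathrm{tan}}$ and the LDG-H strong-conservativity condition \eqref{eq:ldg-h_strong_piecewise} literally coincides with \eqref{eq:xg_inclusion_tan}. This is clear from the construction in the proof of \cref{thm:xg_ldg-h}, where $\check{z}_h^{\mathrm{tan}}$ ranges over $\check{V}_h^{\mathrm{tan}}$; so no genuine obstacle arises, and the corollary is essentially a direct composition of the two preceding theorems.
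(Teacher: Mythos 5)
Your proposal is correct and follows the same route as the paper, whose proof is simply ``Follows directly from \cref{thm:ldg-h_ms} and \cref{thm:xg_ldg-h}''; you have merely spelled out the bookkeeping (the identification $\ringhat{V}_h^{\mathrm{tan}} = \mathring{\check{V}}_h^{\mathrm{tan}}$ and the coincidence of \eqref{eq:ldg-h_strong_piecewise} with \eqref{eq:xg_inclusion_tan}), which the paper itself notes in the remark following \eqref{eq:xg_inclusions}.
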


\begin{proof}
  Follows directly from \cref{thm:ldg-h_ms} and \cref{thm:xg_ldg-h}.
\end{proof}

\subsection{The IP-H method}

We now discuss a close relative of the LDG-H method, which generalizes
the hybrid interior-penalty (IP-H) method in \citet{CoGoLa2009} to the
semilinear Hodge--Laplace problem. For the linear Hodge--Laplace
problem, this coincides with the IP-H method in \citet[Section
8.2.3]{AwFaGuSt2023}. Unlike the previous methods in this section,
which can be applied to more general canonical systems, the IP-H
method is specific to the Hodge--Laplace problem.

For the $k$-form semilinear Hodge--Laplace problem, define the IP-H
flux to be
\begin{equation*}
  \Phi ( z _h , \widehat{ z } _h ) = \Bigl[ ( \widehat{ u } _h
  ^{\mathrm{nor}} - u _h ^{\mathrm{nor}} ) + \alpha ^{ k -1 } \bigl(  \widehat{ \sigma } _h ^{\mathrm{tan}} - ( \delta u _h ) ^{\mathrm{tan}} \bigr) \Bigr] \oplus
  \Bigl[ \bigl(  \widehat{ \rho } _h ^{\mathrm{nor}} - (\mathrm{d} u _h ) ^{\mathrm{nor}} \bigr) + \alpha ^k ( \widehat{ u } _h ^{\mathrm{tan}} - u _h ^{\mathrm{tan}} ) \Bigr],
\end{equation*}
where $ \alpha ^{ k -1 } $ and $ \alpha ^k $ are symmetric penalty
operators on $ L ^2 \Lambda ^{ k -1 } ( \partial \mathcal{T} _h ) $
and $ L ^2 \Lambda ^k ( \partial \mathcal{T} _h ) $, respectively. We
write $ \alpha = \alpha ^{ k -1 } \oplus \alpha ^k $, which will allow
us to suppress the superscripts. As with LDG-H, we choose
$ \widehat{ W } _h ^{\mathrm{nor}} = L ^2 \Lambda ( \partial
\mathcal{T} _h ) $, which strongly enforces
\begin{equation*}
  \widehat{ u } _h ^{\mathrm{nor}} = u _h ^{\mathrm{nor}} - \alpha \bigl(  \widehat{ \sigma } _h ^{\mathrm{tan}} - ( \delta u _h ) ^{\mathrm{tan}} \bigr) , \qquad \widehat{ \rho } _h ^{\mathrm{nor}} = ( \mathrm{d} u _h ) ^{\mathrm{nor}} - \alpha ( \widehat{ u } _h ^{\mathrm{tan}} - u _h ^{\mathrm{tan}} ) .
\end{equation*}
For the weak form \eqref{eq:weakform_HL}, integrating the first and
last equations by parts gives
\begin{subequations}
  \label{eq:ip-h}
  \begin{alignat}{2}
    ( \sigma _h - \delta u _h , \tau _h )_{ \mathcal{T} _h } + \langle \widehat{ u } _h ^{\mathrm{nor}} - u _h ^{\mathrm{nor}} , \tau _h ^{\mathrm{tan}} \rangle _{ \partial \mathcal{T} _h } &= 0 , \quad & \forall \tau _h &\in W _h ^{ k -1 } , \label{eq:ip-h_tau}\\
    ( \sigma _h , \delta v _h ) _{ \mathcal{T} _h } + ( \rho _h ,
    \mathrm{d} v _h ) _{ \mathcal{T} _h } + \langle \widehat{ \sigma }
    _h ^{\mathrm{tan}} , v _h ^{\mathrm{nor}} \rangle _{ \partial
      \mathcal{T} _h } - \langle \widehat{ \rho } _h ^{\mathrm{nor}} ,
    v _h ^{\mathrm{tan}} \rangle _{ \partial \mathcal{T} _h } &=
    \biggl( \frac{ \partial F }{ \partial u _h } , v _h \biggr) _{ \mathcal{T} _h } , \quad &\forall v _h &\in W _h ^k , \label{eq:ip-h_v}\\
    ( \rho _h - \mathrm{d} u _h , \eta _h ) _{ \mathcal{T} _h } - \langle \widehat{ u } _h ^{\mathrm{tan}} - u _h ^{\mathrm{tan}} , \eta _h ^{\mathrm{nor}} \rangle _{ \partial \mathcal{T} _h } &= 0 , \quad &\forall \eta _h &\in W _h ^{ k + 1 } \label{eq:ip-h_eta},
  \end{alignat}
\end{subequations}
and doing similarly with \eqref{eq:weakvar_HL} shows that variations satisfy
\begin{subequations}
  \label{eq:ip-h_var}
  \begin{alignat}{2}
    ( \tau _i - \delta v _i , \tau _h )_{ \mathcal{T} _h } + \langle \widehat{ v } _i ^{\mathrm{nor}} - v_i ^{\mathrm{nor}} , \tau _h ^{\mathrm{tan}} \rangle _{ \partial \mathcal{T} _h } &= 0 , \quad & \forall \tau _h &\in W _h ^{ k -1 } , \label{eq:ip-h_var_tau}\\
    ( \tau _i , \delta v _h ) _{ \mathcal{T} _h } + ( \eta _i ,
    \mathrm{d} v _h ) _{ \mathcal{T} _h } + \langle \widehat{ \tau }
    _i ^{\mathrm{tan}} , v _h ^{\mathrm{nor}} \rangle _{ \partial
      \mathcal{T} _h } - \langle \widehat{ \eta } _i ^{\mathrm{nor}} ,
    v _h ^{\mathrm{tan}} \rangle _{ \partial \mathcal{T} _h } &=
    \biggl( \frac{ \partial ^2 F }{ \partial u _h ^2 } v _i , v _h \biggr) _{ \mathcal{T} _h } , \quad &\forall v _h &\in W _h ^k , \label{eq:ip-h_var_v}\\
    ( \eta _i - \mathrm{d} v _i , \eta _h ) _{ \mathcal{T} _h } - \langle \widehat{ v } _i ^{\mathrm{tan}} - v _i ^{\mathrm{tan}} , \eta _h ^{\mathrm{nor}} \rangle _{ \partial \mathcal{T} _h } &= 0 , \quad &\forall \eta _h &\in W _h ^{ k + 1 } \label{eq:ip-h_var_eta}.
  \end{alignat}
\end{subequations}

\begin{theorem}
  \label{thm:ip-h_ms}
  If $ \delta W _h ^k \subset W _h ^{ k -1 } $ and
  $ \mathrm{d} W _h ^k \subset W _h ^{ k + 1 } $, then the IP-H method
  for the semilinear Hodge--Laplace problem is
  multisymplectic. Furthermore, if additionally
  \begin{subequations}
    \label{eq:ip-h_strong}
    \begin{align} 
      \bigl\llbracket (W _h ^k) ^\mathrm{nor} \bigr\rrbracket + \Bigav{ \alpha \bigl( ( \delta W _h ^k ) ^{\mathrm{tan}} + \widehat{ V } _h ^{k-1,\mathrm{tan}} \bigr) } &\subset \ringhat{V} _h ^{k -1 , \mathrm{tan}} , \label{eq:ip-h_strong_k-1} \\
      \bigl\llbracket (\mathrm{d} W _h ^k) ^{\mathrm{nor}} \bigr\rrbracket + \bigav{ \alpha ( W _h ^{k,\mathrm{tan}} + \widehat{ V } _h ^{k, \mathrm{tan}} ) } &\subset \ringhat{V} _h ^{k , \mathrm{tan}} , \label{eq:ip-h_strong_k}
    \end{align} 
  \end{subequations}
  then it is strongly multisymplectic. For the piecewise-constant
  penalty \eqref{eq:piecewise_constant_penalty}, the conditions reduce
  to
  \begin{subequations}
    \label{eq:ip-h_strong_piecewise}
    \begin{align} 
      \bigl\llbracket (W _h ^k) ^\mathrm{nor} \bigr\rrbracket + \bigav{ ( \delta W _h ^k ) ^{\mathrm{tan}} } &\subset \ringhat{V} _h ^{k -1 , \mathrm{tan}} , \label{eq:ip-h_strong_piecewise_k-1} \\
      \bigl\llbracket (\mathrm{d} W _h ^k) ^{\mathrm{nor}} \bigr\rrbracket + \av{ W _h ^{k,\mathrm{tan}} } &\subset \ringhat{V} _h ^{k , \mathrm{tan}} , \label{eq:ip-h_strong_piecewise_k}
    \end{align} 
  \end{subequations}
  and for the reduced-stabilization penalty \eqref{eq:reduced_penalty}
  they reduce even further to
  \begin{subequations}
    \label{eq:ip-h_strong_reduced}
    \begin{align} 
      \bigl\llbracket (W _h ^k) ^\mathrm{nor} \bigr\rrbracket &\subset \ringhat{V} _h ^{k -1 , \mathrm{tan}} , \label{eq:ip-h_strong_reduced_k-1} \\
      \bigl\llbracket (\mathrm{d} W _h ^k) ^{\mathrm{nor}} \bigr\rrbracket &\subset \ringhat{V} _h ^{k , \mathrm{tan}} . \label{eq:ip-h_strong_reduced_k}
    \end{align} 
  \end{subequations}
\end{theorem}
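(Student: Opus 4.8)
The plan is to follow the template of the proof of \cref{thm:ldg-h_ms}, adjusted for the fact that the IP-H flux uses $(\delta u_h)^{\mathrm{tan}}$ and $(\mathrm{d} u_h)^{\mathrm{nor}}$ in place of the LDG-H quantities $\widehat{\sigma}_h^{\mathrm{tan}}$ and $\widehat{\rho}_h^{\mathrm{nor}}$. Because $\widehat{W}_h^{\mathrm{nor}} = L^2\Lambda(\partial\mathcal{T}_h)$, the first component of \eqref{eq:weakvar_wnor} enforces the flux relation strongly, so for any first variation $(w_i,\widehat{w}_i)$ with $w_i = \tau_i \oplus v_i \oplus \eta_i$ we have the pointwise identities $\widehat{v}_i^{\mathrm{nor}} - v_i^{\mathrm{nor}} = -\alpha^{k-1}\bigl(\widehat{\tau}_i^{\mathrm{tan}} - (\delta v_i)^{\mathrm{tan}}\bigr)$ and $\widehat{\eta}_i^{\mathrm{nor}} - \eta_i^{\mathrm{nor}} = (\mathrm{d} v_i - \eta_i)^{\mathrm{nor}} - \alpha^k\bigl(\widehat{v}_i^{\mathrm{tan}} - v_i^{\mathrm{tan}}\bigr)$ on each $\partial K$. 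Throughout, $f^\prime(z_h) = \mathrm{id} \oplus (\partial^2 F/\partial u_h^2) \oplus \mathrm{id}$ is symmetric, so \cref{lem:jump} applies.

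For the first claim, I would verify \cref{def:local_multisymplecticity} directly by showing $[\widehat{w}_1 - w_1, \widehat{w}_2 - w_2]_{\partial K} = 0$, which by \cref{lem:jump} gives \eqref{eq:mscl_flux}. Expanding the bracket exactly as in \cref{ex:mscl_jump_HL} and substituting the two flux relations above, the four terms group into a ``$\tau$-part'' (involving only $\alpha^{k-1}$) and a ``$v$-part'' (involving only $\alpha^k$). In each part, the contributions bilinear in $\widehat{\tau}_i^{\mathrm{tan}} - \tau_i^{\mathrm{tan}}$ (resp.\ $\widehat{v}_i^{\mathrm{tan}} - v_i^{\mathrm{tan}}$) cancel by symmetry of $\alpha^{k-1}$ (resp.\ $\alpha^k$), just as in \cref{lem:ldg-h_flux}. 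What remains are cross-terms of the form $-\langle \widehat{\tau}_1^{\mathrm{tan}} - \tau_1^{\mathrm{tan}}, \alpha^{k-1}(\tau_2 - \delta v_2)^{\mathrm{tan}}\rangle_{\partial K} + \langle \widehat{\tau}_2^{\mathrm{tan}} - \tau_2^{\mathrm{tan}}, \alpha^{k-1}(\tau_1 - \delta v_1)^{\mathrm{tan}}\rangle_{\partial K}$ and its analogue with $(\mathrm{d} v_i - \eta_i)^{\mathrm{nor}}$; these are absent in LDG-H. This is where the hypotheses $\delta W_h^k \subset W_h^{k-1}$ and $\mathrm{d} W_h^k \subset W_h^{k+1}$ enter: they make $\tau_j - \delta v_j \in W_h^{k-1}$ and $\eta_j - \mathrm{d} v_j \in W_h^{k+1}$ admissible test functions, supported on a single element, in the element-local versions of \eqref{eq:ip-h_var_tau} and \eqref{eq:ip-h_var_eta} (available since $W_h$ is broken). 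Testing \eqref{eq:ip-h_var_tau} for variation $i$ against $\tau_h = \tau_j - \delta v_j$, the volume term $(\tau_i - \delta v_i, \tau_j - \delta v_j)_K$ is symmetric in $i \leftrightarrow j$, which yields $\langle \widehat{\tau}_i - \delta v_i, \alpha^{k-1}(\tau_j - \delta v_j)\rangle_{\partial K} = \langle \widehat{\tau}_j - \delta v_j, \alpha^{k-1}(\tau_i - \delta v_i)\rangle_{\partial K}$; writing $\widehat{\tau}_1 - \tau_1 = (\widehat{\tau}_1 - \delta v_1) - (\tau_1 - \delta v_1)$ and applying this relation together with symmetry of $\alpha^{k-1}$, the $\tau$ cross-terms telescope to zero, and likewise the $v$ cross-terms via \eqref{eq:ip-h_var_eta}. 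Hence \eqref{eq:mscl_jump} holds and the method is multisymplectic.

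For strong multisymplecticity I would invoke \cref{thm:strong.MS}, so it suffices to prove $\Phi$ is strongly conservative; by \cref{lemma:strong.conservativity} it is enough that \eqref{eq:weakform_wnor} force $\llbracket \widehat{z}_h^{\mathrm{nor}}\rrbracket = \llbracket \widehat{u}_h^{\mathrm{nor}}\rrbracket \oplus \llbracket \widehat{\rho}_h^{\mathrm{nor}}\rrbracket \in \ringhat{V}_h^{\mathrm{tan}}$. Taking normal jumps of the two strong flux relations and using that normal jump coincides with the tangential-average operation, together with single-valuedness of $\widehat{\sigma}_h^{\mathrm{tan}} \in \widehat{V}_h^{k-1,\mathrm{tan}}$ and $\widehat{u}_h^{\mathrm{tan}} \in \widehat{V}_h^{k,\mathrm{tan}}$, gives $\llbracket\widehat{u}_h^{\mathrm{nor}}\rrbracket = \llbracket u_h^{\mathrm{nor}}\rrbracket - \av{\alpha^{k-1}(\widehat{\sigma}_h^{\mathrm{tan}} - (\delta u_h)^{\mathrm{tan}})}$ and $\llbracket\widehat{\rho}_h^{\mathrm{nor}}\rrbracket = \llbracket(\mathrm{d} u_h)^{\mathrm{nor}}\rrbracket - \av{\alpha^k(\widehat{u}_h^{\mathrm{tan}} - u_h^{\mathrm{tan}})}$; since $\delta u_h \in W_h^{k-1}$ and $\mathrm{d} u_h \in W_h^{k+1}$, these lie in $\llbracket(W_h^k)^{\mathrm{nor}}\rrbracket + \av{\alpha((\delta W_h^k)^{\mathrm{tan}} + \widehat{V}_h^{k-1,\mathrm{tan}})}$ and $\llbracket(\mathrm{d} W_h^k)^{\mathrm{nor}}\rrbracket + \av{\alpha(W_h^{k,\mathrm{tan}} + \widehat{V}_h^{k,\mathrm{tan}})}$, respectively, which \eqref{eq:ip-h_strong} places in $\ringhat{V}_h^{\mathrm{tan}}$. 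The piecewise-constant and reduced-stabilization reductions then repeat the arguments in the proof of \cref{thm:ldg-h_ms}: for $\alpha$ preserving $\widehat{W}_h^{\mathrm{tan}}$ facet-by-facet one checks $\av{\alpha(\delta W_h^k)^{\mathrm{tan}}} \subset \av{(\delta W_h^k)^{\mathrm{tan}}}$ by writing $\alpha_e^{k-1}\delta u_h\rvert_K = \delta(\alpha_e^{k-1}u_h\rvert_K)$ on the two elements adjacent to a fixed facet $e$, and $\av{\alpha\widehat{V}_h^{\mathrm{tan}}} \subset \av{\widehat{W}_h^{\mathrm{tan}}} = \ringhat{V}_h^{\mathrm{tan}}$, so the $\alpha$-terms are absorbed into \eqref{eq:ip-h_strong_piecewise}; for the reduced-stabilization penalty, every $\alpha(\cdot)$ already lies in $\widehat{W}_h^{\mathrm{tan}}$, so all $\alpha$-averages are automatically in $\ringhat{V}_h^{\mathrm{tan}}$, leaving only \eqref{eq:ip-h_strong_reduced}.

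I expect the main obstacle to be the bookkeeping in the leftover cross-terms of the first part: one must track which of the four bracket terms each piece comes from, split each difference $\widehat{\tau}_i - \tau_i$ (resp.\ $\widehat{v}_i - v_i$) relative to $\delta v_i$ (resp.\ $\mathrm{d} v_i$), and insert the $i\leftrightarrow j$ symmetry coming from \eqref{eq:ip-h_var_tau}--\eqref{eq:ip-h_var_eta} at precisely the right place so that everything cancels. The conceptual content is modest---symmetry of the penalty operators plus symmetry of volume inner products---but getting signs and indices aligned requires care; a secondary minor point is confirming that the element-local forms of \eqref{eq:ip-h_var_tau}--\eqref{eq:ip-h_var_eta} are legitimate, which follows from $W_h$ being a product over elements.
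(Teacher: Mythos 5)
Your proposal is correct and follows essentially the same route as the paper's proof: reduce to \eqref{eq:mscl_jump} via \cref{lem:jump}, use the strongly enforced flux relations together with symmetry of $\alpha$, and invoke the element-local variational equations with test functions $\delta v_j - \tau_j$ and $\mathrm{d} v_j - \eta_j$ (which is exactly where the hypotheses $\delta W_h^k \subset W_h^{k-1}$ and $\mathrm{d} W_h^k \subset W_h^{k+1}$ enter), then deduce strong conservativity by taking normal jumps of the flux relations and apply \cref{thm:strong.MS}. The only difference is organizational---the paper splits $\widehat{\tau}_i^{\mathrm{tan}} - \tau_i^{\mathrm{tan}}$ relative to $(\delta v_i)^{\mathrm{tan}}$ before substituting the flux relation, whereas you substitute first and then telescope---but the cancellations are identical.
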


\begin{proof}
  Let $ ( w _1 , \widehat{ w } _1 ) $ and
  $ ( w _2 , \widehat{ w } _2 ) $ be a pair of first variations, where
  \begin{equation*}
    w _i = \tau _i \oplus v _i \oplus \eta _i , \qquad \widehat{ w } _i ^{\mathrm{nor}} = \widehat{ v } _i ^{\mathrm{nor}} \oplus \widehat{ \eta } _i ^{\mathrm{nor}} , \qquad \widehat{ w } _i ^{\mathrm{tan}} = \widehat{ \tau } _i ^{\mathrm{tan}} \oplus \widehat{ v } _i ^{\mathrm{tan}} .
  \end{equation*}
  For (local) multisymplecticity, recall from \cref{ex:mscl_jump_HL}
  that it suffices to show that
  \begin{equation*}
    \langle \widehat{ \tau } _1 ^{\mathrm{tan}} - \tau _1 ^{\mathrm{tan}} , \widehat{ v } _2 ^{\mathrm{nor}} - v _2 ^{\mathrm{nor}} \rangle _{ \partial \mathcal{T} _h } + \langle \widehat{ v } _1 ^{\mathrm{tan}} - v _1 ^{\mathrm{tan}} , \widehat{ \eta } _2 ^{\mathrm{nor}} - \eta _2 ^{\mathrm{nor}} \rangle _{ \partial \mathcal{T} _h } 
  \end{equation*}
  is symmetric in the indices $ i = 1, 2 $. To see this, we begin by writing
  \begin{equation*}
    \langle \widehat{ \tau } _1 ^{\mathrm{tan}} - \tau _1 ^{\mathrm{tan}} , \widehat{ v } _2 ^{\mathrm{nor}} - v _2 ^{\mathrm{nor}} \rangle _{ \partial \mathcal{T} _h } = \bigl\langle \widehat{ \tau } _1 ^{\mathrm{tan}} - ( \delta v _1 ) ^{\mathrm{tan}} , \widehat{ v } _2 ^{\mathrm{nor}} - v _2 ^{\mathrm{nor}} \bigr\rangle _{ \partial \mathcal{T} _h } + \bigl\langle ( \delta v _1 ) ^{\mathrm{tan}} - \tau _1 ^{\mathrm{tan}} , \widehat{ v } _2 ^{\mathrm{nor}} - v _2 ^{\mathrm{nor}} \bigr\rangle _{ \partial \mathcal{T} _h }.
  \end{equation*}
  For the first term, the numerical flux relation for
  $ \widehat{ v } _2 ^{\mathrm{nor}} $ (which is a variation of
  $ \widehat{ u } _h ^{\mathrm{nor}} $) gives
  \begin{equation*}
    \bigl\langle \widehat{ \tau } _1 ^{\mathrm{tan}} - ( \delta v _1 ) ^{\mathrm{tan}} , \widehat{ v } _2 ^{\mathrm{nor}} - v _2 ^{\mathrm{nor}} \bigr\rangle _{ \partial \mathcal{T} _h } = - \Bigl\langle \widehat{ \tau } _1 ^{\mathrm{tan}} - ( \delta v _1 ) ^{\mathrm{tan}} , \alpha \bigl( \widehat{ \tau } _2 ^{\mathrm{tan}} - ( \delta v _2 ) ^{\mathrm{tan}} \bigr) \Bigr\rangle _{ \partial \mathcal{T} _h } .
  \end{equation*}
  For the second term, $ \delta W _h ^k \subset W _h ^{ k -1 } $
  allows us to take $ \tau _h = \delta v _1 - \tau _1 $ in
  \eqref{eq:ip-h_var_tau} with $ i = 2 $, giving
  \begin{equation*}
    \bigl\langle ( \delta v _1 ) ^{\mathrm{tan}} - \tau _1 ^{\mathrm{tan}} , \widehat{ v } _2 ^{\mathrm{nor}} - v _2 ^{\mathrm{nor}} \bigr\rangle _{ \partial \mathcal{T} _h } = ( \delta v _1 - \tau _1, \delta v _2 - \tau _2 ) _{ \mathcal{T} _h } .
  \end{equation*}
  Both terms are symmetric in $ i = 1, 2 $, and hence so is their
  sum. Similarly, we write
  \begin{equation*}
    \langle \widehat{ v } _1 ^{\mathrm{tan}} - v _1 ^{\mathrm{tan}} , \widehat{ \eta } _2 ^{\mathrm{nor}} - \eta _2 ^{\mathrm{nor}} \rangle _{ \partial \mathcal{T} _h } = \bigl\langle \widehat{ v } _1 ^{\mathrm{tan}} - v _1 ^{\mathrm{tan}} , \widehat{ \eta } _2 ^{\mathrm{nor}} - ( \mathrm{d} v _2 ) ^{\mathrm{nor}} \bigr\rangle _{ \partial \mathcal{T} _h } + \bigl\langle \widehat{ v } _1 ^{\mathrm{tan}} - v _1 ^{\mathrm{tan}} , ( \mathrm{d} v _2 ) ^{\mathrm{nor}} - \eta _2 ^{\mathrm{nor}} \bigr\rangle _{ \partial \mathcal{T} _h }.
  \end{equation*}
  For the first term, the numerical flux relation for
  $ \widehat{ \eta } _2 ^{\mathrm{nor}} $ (which is a variation of
  $ \widehat{ \rho } _h ^{\mathrm{nor}} $) gives
  \begin{equation*}
    \bigl\langle \widehat{ v } _1 ^{\mathrm{tan}} - v _1 ^{\mathrm{tan}} , \widehat{ \eta } _2 ^{\mathrm{nor}} - ( \mathrm{d} v _2 ) ^{\mathrm{nor}} \bigr\rangle _{ \partial \mathcal{T} _h } = - \bigl\langle \widehat{ v } _1 ^{\mathrm{tan}} - v _1 ^{\mathrm{tan}} , \alpha ( \widehat{ v } _2 ^{\mathrm{tan}} - v _2 ^{\mathrm{tan}} )\bigr\rangle _{ \partial \mathcal{T} _h }.
  \end{equation*}
  For the second term, $ \mathrm{d} W _h ^k \subset W _h ^{ k + 1 } $
  allows us to take $ \eta _h = \mathrm{d} v _2 - \eta _2 $ in
  \eqref{eq:ip-h_var_eta} with $ i = 1 $, giving
  \begin{equation*}
    \bigl\langle \widehat{ v } _1 ^{\mathrm{tan}} - v _1 ^{\mathrm{tan}} , ( \mathrm{d} v _2 ) ^{\mathrm{nor}} - \eta _2 ^{\mathrm{nor}} \bigr\rangle _{ \partial \mathcal{T} _h } = - ( \mathrm{d} v _1 - \eta _1 , \mathrm{d} v _2 - \eta _2 ) _{ \mathcal{T} _h } .
  \end{equation*}
  Again, both terms are symmetric in $ i = 1, 2 $, which completes the
  proof of local multisymplecticity.

  Finally, proceeding as in \cref{thm:ldg-h_ms}, we see that
  \eqref{eq:ip-h_strong_k-1} implies
  $ \llbracket \widehat{ u } _h ^{\mathrm{nor}} \rrbracket \in
  \ringhat{V} _h ^{k-1, \mathrm{tan}} $ and \eqref{eq:ip-h_strong_k}
  implies
  $ \llbracket \widehat{ \rho } _h ^{\mathrm{nor}} \rrbracket \in
  \ringhat{V} _h ^{k, \mathrm{tan}} $. Therefore, if both conditions
  hold, we conclude that the IP-H method is strongly conservative by
  \cref{lemma:strong.conservativity} and thus strongly multisymplectic
  by \cref{thm:strong.MS}. For the piecewise-constant and
  reduced-stabilization forms of the penalty, the simplified
  conservativity conditions are obtained exactly as in the proof of
  \cref{thm:ldg-h_ms}.
\end{proof}

\begin{remark}
  \label{rmk:ip-h_ldg-h}
  Note that, if the hypotheses
  $ \delta W _h ^k \subset W _h ^{ k -1 } $ and
  $ \mathrm{d} W _h ^k \subset W _h ^{ k + 1 } $ hold, then the
  conservativity conditions for LDG-H automatically imply those for
  IP-H: \eqref{eq:ldg-h_strong} implies \eqref{eq:ip-h_strong},
  \eqref{eq:ldg-h_strong_piecewise} implies
  \eqref{eq:ip-h_strong_piecewise}, and
  \eqref{eq:ldg-h_strong_reduced} implies
  \eqref{eq:ip-h_strong_reduced}.
\end{remark}

\begin{corollary}
  On a simplicial mesh with piecewise-constant penalty coefficients
  \eqref{eq:piecewise_constant_penalty} and equal-order finite element
  spaces \eqref{eq:equal_order}, the IP-H method is strongly
  multisymplectic.
\end{corollary}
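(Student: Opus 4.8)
The plan is to obtain this as a direct application of \cref{thm:ip-h_ms}, which reduces strong multisymplecticity of IP-H to two things for the equal-order choice $W _h ^k (K) = \mathcal{P} _r \Lambda ^k (K)$ and $\widehat{ W } _h ^{\mathrm{tan}} (e) = \mathcal{P} _r \Lambda (e)$ with piecewise-constant penalty on a simplicial mesh: the inclusions $\delta W _h ^k \subset W _h ^{ k -1 }$, $\mathrm{d} W _h ^k \subset W _h ^{ k + 1 }$, and the conservativity conditions \eqref{eq:ip-h_strong_piecewise}.

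First I would check the inclusions. Since $\mathrm{d}$ and $\delta$ each lower polynomial degree by one, $\mathrm{d} \mathcal{P} _r \Lambda ^k (K) \subset \mathcal{P} _{ r -1 } \Lambda ^{ k + 1 } (K) \subset \mathcal{P} _r \Lambda ^{ k + 1 } (K)$ and likewise $\delta \mathcal{P} _r \Lambda ^k (K) \subset \mathcal{P} _r \Lambda ^{ k -1 } (K)$, so both inclusions hold. This already gives multisymplecticity and lets us invoke the second half of \cref{thm:ip-h_ms}.

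Next I would verify \eqref{eq:ip-h_strong_piecewise}. The key observation is that on a simplicial mesh the pullback of a polynomial form of degree $\le r$ under the affine inclusion $e \hookrightarrow K$ of a facet is again a polynomial form of degree $\le r$ on $e$. Hence, for $w _h \in \mathcal{P} _r \Lambda ^k (K)$, the tangential trace $w _h ^{\mathrm{tan}} \rvert _e$ lies in $\mathcal{P} _r \Lambda ^k (e) = \widehat{ W } _h ^{k,\mathrm{tan}} (e)$; applying $\widehat{ \star } ^{-1}$ (which, like $\star$, preserves polynomial degree for the constant Euclidean metric) shows the normal trace $w _h ^{\mathrm{nor}} \rvert _e$ lies in $\mathcal{P} _r \Lambda ^{ k -1 } (e) = \widehat{ W } _h ^{k-1,\mathrm{tan}} (e)$; and since $\mathrm{d}, \delta$ only lower degree, $(\mathrm{d} w _h) ^{\mathrm{nor}} \rvert _e \in \mathcal{P} _{r-1} \Lambda ^k (e) \subset \widehat{ W } _h ^{k,\mathrm{tan}} (e)$ and $(\delta w _h) ^{\mathrm{tan}} \rvert _e \in \mathcal{P} _{r-1} \Lambda ^{ k -1 } (e) \subset \widehat{ W } _h ^{k-1,\mathrm{tan}} (e)$. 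Consequently each of $\jump{ (W _h ^k) ^{\mathrm{nor}} }$, $\av{ (\delta W _h ^k) ^{\mathrm{tan}} }$, $\jump{ (\mathrm{d} W _h ^k) ^{\mathrm{nor}} }$, and $\av{ W _h ^{k,\mathrm{tan}} }$ is made up of traces that are facetwise in $\mathcal{P} _r \Lambda$, are tangentially single-valued (automatic, since by \cref{def:jump_avg} a normal jump takes the same value on $e ^+$ and $e ^-$, as does a tangential average), and vanish on $\partial \Omega$ — i.e., they lie in $\ringhat{V} _h ^{\mathrm{tan}}$. Thus \eqref{eq:ip-h_strong_piecewise} holds, and \cref{thm:ip-h_ms} gives strong multisymplecticity. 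Alternatively, one may simply combine \cref{cor:equal-order} (whose proof establishes \eqref{eq:ldg-h_strong_piecewise} for these spaces) with \cref{rmk:ip-h_ldg-h}, using the degree inclusions already checked, to get \eqref{eq:ip-h_strong_piecewise} with less work.

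There is essentially no genuine obstacle here; the statement is bookkeeping with trace spaces on simplices. The only thing to be careful about is correctly tracking form degrees through $\mathrm{d}$, $\delta$, and the tangential/normal trace operators, and confirming (from \cref{def:jump_avg}) that the jump and average operators appearing in \eqref{eq:ip-h_strong_piecewise} genuinely produce tangentially single-valued traces that are supported away from $\partial \Omega$.
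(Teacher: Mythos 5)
Your proposal is correct, and your closing alternative---combining \cref{cor:equal-order} with \cref{rmk:ip-h_ldg-h} after checking that $\mathrm{d}$ and $\delta$ preserve $\mathcal{P}_r\Lambda(K)$---is exactly the route the paper intends, which is why the corollary is stated there without proof immediately after that remark. Your longer direct verification of \eqref{eq:ip-h_strong_piecewise} is also sound (the degree bookkeeping through $\operatorname{tr}$, $\widehat{\star}$, $\mathrm{d}$, and $\delta$ is right, and normal jumps and tangential averages are indeed tangentially single-valued and vanish on $\partial\Omega$ by \cref{def:jump_avg}), but it only re-derives what \cref{cor:equal-order} already provides.
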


\subsubsection{IP-H as a primal interior-penalty DG method}

We now illustrate how IP-H corresponds to a primal interior-penalty DG
method in cases where the hybrid variables may be eliminated. Suppose
for simplicity that $\alpha$ is a piecewise-constant penalty
\eqref{eq:piecewise_constant_penalty} whose coefficients are nonzero
and single-valued. As in \cref{sec:xg}, let
$ \beta \coloneqq \alpha ^{-1} $ be the piecewise-constant penalty
whose coefficients are the reciprocals of those for
$\alpha$. Furthermore, since we need to impose boundary conditions in
order to eliminate $ \widehat{ \sigma } _h ^{\mathrm{tan}} $ and
$ \widehat{ u } _h ^{\mathrm{tan}} $ on $ \partial \Omega $, suppose
that we have homogeneous essential boundary conditions
$ \widehat{ \sigma } _h ^{\mathrm{tan}} , \widehat{ u } _h
^{\mathrm{tan}} \in \ringhat{V} _h ^{\mathrm{tan}} $. A similar
approach can be taken, e.g., with natural boundary conditions, and the
resulting expressions differ only on $ \partial \Omega $. Finally,
assume that the hypotheses of \cref{thm:ip-h_ms} hold.

Our goal is to express each of the terms on the left-hand side of
\eqref{eq:ip-h_v} in terms of the primal variable $ u _h $. First,
taking $ \tau _h = \delta v _h $ in \eqref{eq:ip-h_tau}, we get
\begin{align*} 
  ( \sigma _h , \delta v _h ) _{ \mathcal{T} _h }
  &= ( \delta u _h , \delta v _h ) _{ \mathcal{T} _h } - \Bigl\langle \llbracket  \widehat{ u } _h ^{\mathrm{nor}} - u _h ^{\mathrm{nor}} \rrbracket , \bigav{ (\delta v _h ) ^{\mathrm{tan}} } \Bigr\rangle _{ \partial \mathcal{T} _h } + \Bigl\langle \alpha \bigl\llbracket \widehat{ \sigma } _h ^{\mathrm{tan}} - ( \delta u _h ) ^{\mathrm{tan}} \bigr\rrbracket, \bigl\llbracket (\delta v _h ) ^{\mathrm{tan}} \bigr\rrbracket \Bigr\rangle _{ \partial \mathcal{T} _h }  \\
  &= ( \delta u _h , \delta v _h ) _{ \mathcal{T} _h } + \Bigl\langle \llbracket u _h ^{\mathrm{nor}} \rrbracket , \bigav{ (\delta v _h ) ^{\mathrm{tan}} } \Bigr\rangle _{ \partial \mathcal{T} _h } - \Bigl\langle \alpha \bigl\llbracket ( \delta u _h ) ^{\mathrm{tan}} \bigr\rrbracket, \bigl\llbracket (\delta v _h ) ^{\mathrm{tan}} \bigr\rrbracket \Bigr\rangle _{ \partial \mathcal{T} _h } ,
\end{align*} 
where the first line uses \cref{prop:average-jump} and the flux
relation for $ \widehat{ u } _h ^{\mathrm{nor}} $, and the second line
uses $ \llbracket \widehat{ u } _h ^{\mathrm{nor}} \rrbracket = 0 $
and
$ \llbracket \widehat{ \sigma } _h ^{\mathrm{tan}} \rrbracket = 0 $.
Similarly, taking $ \eta _h = \mathrm{d} v _h $ in \eqref{eq:ip-h_eta}
and using $ \beta = \alpha ^{-1} $, we get
\begin{equation*} 
  ( \rho _h , \mathrm{d} v _h ) _{ \mathcal{T} _h }
  = ( \mathrm{d} u _h , \mathrm{d} v _h ) _{ \mathcal{T} _h } - \Bigl\langle \llbracket u _h ^{\mathrm{tan}} \rrbracket , \bigav{ ( \mathrm{d} v _h ) ^{\mathrm{nor}} } \Bigr\rangle _{ \partial \mathcal{T} _h } + \Bigl\langle \beta \bigl\llbracket ( \mathrm{d} u _h ) ^{\mathrm{nor}} \bigr\rrbracket  , \bigl\llbracket ( \mathrm{d} v _h ) ^{\mathrm{nor}} \bigr\rrbracket \Bigr\rangle _{ \partial \mathcal{T} _h }.
\end{equation*} 
Next, using
$ \llbracket \widehat{ \sigma } _h ^{\mathrm{tan}} \rrbracket = 0 $
and $ \llbracket \widehat{ u } _h ^{\mathrm{nor}} \rrbracket = 0 $
again, we have
\begin{align*} 
  \langle \widehat{ \sigma } _h ^{\mathrm{tan}} , v _h ^{\mathrm{nor}} \rangle _{ \partial \mathcal{T} _h }
  &= \bigl\langle \av{ \widehat{ \sigma } _h ^{\mathrm{tan}} } , \llbracket v _h ^{\mathrm{nor}} \rrbracket \bigr\rangle _{ \partial \mathcal{T} _h } \\
  &= \Bigl\langle \bigav{ ( \delta u _h ) ^{\mathrm{tan}} }, \llbracket v _h ^{\mathrm{nor}} \rrbracket \Bigr\rangle _{ \partial \mathcal{T} _h } + \bigl\langle \beta \llbracket u _h ^{\mathrm{nor}} \rrbracket , \llbracket v _h ^{\mathrm{nor}} \rrbracket \bigr\rangle _{ \partial \mathcal{T} _h } ,
\end{align*} 
and similarly,
\begin{equation*}
  \langle \widehat{ \rho } _h ^{\mathrm{nor}} , v _h ^{\mathrm{tan}} \rangle _{ \partial \mathcal{T} _h } = \Bigl\langle \bigav{ ( \mathrm{d} u _h ) ^{\mathrm{nor}} }, \llbracket v _h ^{\mathrm{tan}} \rrbracket \Bigr\rangle _{ \partial \mathcal{T} _h } + \bigl\langle \alpha \llbracket u _h ^{\mathrm{tan}} \rrbracket , \llbracket v _h ^{\mathrm{tan}} \rrbracket \bigr\rangle _{ \partial \mathcal{T} _h } .
\end{equation*}
Altogether, the left-hand side of \eqref{eq:ip-h_v} may therefore be
replaced by the primal bilinear form
$ a _h ( u _h , v _h ) \coloneqq a _h ^\delta ( u _h , v _h ) + a _h
^{ \mathrm{d} } ( u _h , v _h ) $, where
\begin{align*}
  a _h ^\delta ( u _h , v _h ) &\coloneqq ( \delta u _h , \delta v _h ) _{ \mathcal{T} _h } + \bigl\langle \beta \llbracket u _h ^{\mathrm{nor}} \rrbracket , \llbracket v _h ^{\mathrm{nor}} \rrbracket \bigr\rangle _{ \partial \mathcal{T} _h } - \Bigl\langle \alpha \bigl\llbracket ( \delta u _h ) ^{\mathrm{tan}} \bigr\rrbracket, \bigl\llbracket (\delta v _h ) ^{\mathrm{tan}} \bigr\rrbracket \Bigr\rangle _{ \partial \mathcal{T} _h } \\
  &\qquad + \Bigl\langle \bigav{ ( \delta u _h ) ^{\mathrm{tan}} }, \llbracket v _h ^{\mathrm{nor}} \rrbracket \Bigr\rangle _{ \partial \mathcal{T} _h } + \Bigl\langle \llbracket u _h ^{\mathrm{nor}} \rrbracket , \bigav{ (\delta v _h ) ^{\mathrm{tan}} } \Bigr\rangle _{ \partial \mathcal{T} _h } \\
  a _h ^{\mathrm{d}} ( u _h , v _h ) &\coloneqq ( \mathrm{d} u _h , \mathrm{d} v _h ) _{ \mathcal{T} _h } - \bigl\langle \alpha \llbracket u _h ^{\mathrm{tan}} \rrbracket , \llbracket v _h ^{\mathrm{tan}} \rrbracket \bigr\rangle _{ \partial \mathcal{T} _h } + \Bigl\langle \beta \bigl\llbracket ( \mathrm{d} u _h ) ^{\mathrm{nor}} \bigr\rrbracket , \bigl\llbracket ( \mathrm{d} v _h ) ^{\mathrm{nor}} \bigr\rrbracket \Bigr\rangle _{ \partial \mathcal{T} _h }\\
  &\qquad - \Bigl\langle \bigav{ ( \mathrm{d} u _h )
    ^{\mathrm{nor}} }, \llbracket v _h ^{\mathrm{tan}} \rrbracket
  \Bigr\rangle _{ \partial \mathcal{T} _h } - \Bigl\langle \llbracket u _h ^{\mathrm{tan}} \rrbracket , \bigav{
    ( \mathrm{d} v _h ) ^{\mathrm{nor}} } \Bigr\rangle _{ \partial
    \mathcal{T} _h } .
\end{align*}
For stabilization, examining the two leading terms of $ a _h ^\delta $
shows that we will want $ \alpha ^{ k -1 } , \beta ^{ k -1 } > 0 $,
and considering $ a _h ^{ \mathrm{d} } $ similarly shows that we will
want $ \alpha ^k , \beta ^k < 0 $.

\begin{remark}
  The bilinear form $ a _h $ may also be written more briefly as
  \begin{alignat*}{2}
    a _h ( u _h , v _h ) = ( \mathrm{D} u _h , \mathrm{D} v _h ) _{ \mathcal{T} _h } &+ \bigl\langle \gamma \llbracket u _h \rrbracket , \llbracket v _h \rrbracket \bigr\rangle _{ \partial \mathcal{T} _h } &&- \bigl\langle \gamma ^{-1} \llbracket \mathrm{D} u _h \rrbracket , \llbracket \mathrm{D} v _h \rrbracket \bigr\rangle _{ \partial \mathcal{T} _h } \\
    &+ \bigl[ \av{\mathrm{D} u _h }, \llbracket v _h \rrbracket \bigr] _{ \partial \mathcal{T} _h } &&- \bigl[ \llbracket u _h \rrbracket , \av{\mathrm{D} v _h } \bigr] _{ \partial \mathcal{T} _h } ,
  \end{alignat*}
  where $ \gamma \coloneqq \beta ^{ k -1 } \oplus (- \alpha ^k ) $ and
  $ \gamma ^{-1} = \alpha ^{ k -1 } \oplus ( - \beta ^k ) $. If
  $ \alpha ^{ k -1 } , \beta ^{ k -1 } > 0 $ and
  $ \alpha ^k , \beta ^k < 0 $, as in the comments above, then all
  coefficients of $\gamma, \gamma ^{-1} $ will be positive.
\end{remark}

\subsubsection{IP-H methods for scalar and vector problems}

For scalar problems with $ k = 0 $ or $ k = n $, \cref{rmk:ip-h_ldg-h}
ensures that the spaces and penalties discussed in
\cref{sec:ldg-h_scalar} for LDG-H also yield strongly multisymplectic
IP-H methods. For the piecewise-constant penalty, we recover the
equal-order IP-H and unequal-order ``IP-H-like'' methods in
\citet{CoGoLa2009}, whose strong multisymplecticity was established in
\citet{McSt2020}. For the reduced-stabilization penalty, we recover
the hybridized interior-penalty methods analyzed by
\citet{Lehrenfeld2010,LeSc2016} and \citet{Oikawa2015,Oikawa2016}. For
either $ k = 0 $ or $ k = n $ with piecewise-constant penalty, the
primal bilinear form $ a _h $ may be written as
\begin{align*}
  a _h (u _h , v _h ) = ( \operatorname{grad} u _h , \operatorname{grad} v _h ) _{ \mathcal{T} _h } &+ \bigl\langle \gamma \llbracket u _h \widehat{ n } \rrbracket , \llbracket v _h \widehat{ n } \rrbracket \bigr\rangle _{ \partial \mathcal{T} _h } - \bigl\langle \gamma ^{-1} \llbracket \operatorname{grad} u _h \cdot \widehat{ n } \rrbracket , \llbracket \operatorname{grad} v _h \cdot \widehat{ n } \rrbracket \bigr\rangle _{ \partial \mathcal{T} _h } \\
  &- \bigl\langle \av{ \operatorname{grad} u _h }, \llbracket v _h \widehat{ n } \rrbracket \bigr\rangle _{ \partial \mathcal{T} _h } - \bigl\langle \llbracket u _h \widehat{ n } \rrbracket , \av{ \operatorname{grad} v _h } \bigr\rangle _{ \partial \mathcal{T} _h } ,
\end{align*}
where $ \widehat{ n } $ is the outer unit normal vector field on
$ \partial \mathcal{T} _h $. Here, the jumps denoted
$ \llbracket {} \cdot \widehat{ n } \rrbracket $ are taken in an
orientation-independent sense, while $ \av{ \cdot } $ is an ordinary
vector average, i.e.,
\begin{align*}
  \llbracket v _h \widehat{ n } \rrbracket _{e ^\pm } &= \frac{1}{2} \bigl(  v _h \widehat{ n } \rvert _{ e^+ } + v _h \widehat{ n } \rvert _{ e^- } \bigr) ,\\
  \llbracket \operatorname{grad} v _h \cdot \widehat{ n } \rrbracket _{e ^\pm } &= \frac{1}{2} \bigl( \operatorname{grad} v _h \cdot \widehat{ n } \rvert _{ e^+ } + \operatorname{grad} v _h \cdot \widehat{ n } \rvert _{ e^- } \bigr) ,\\
  \av{ \operatorname{grad} v _h } _{ e^\pm } &= \frac{1}{2} \bigl( \operatorname{grad} v _h \rvert _{ e ^+} + \operatorname{grad} v _h \rvert _{ e ^- } \bigr) . 
\end{align*}
Note that $ a _h $ differs subtly from the primal form for the classic
interior-penalty method of \citet{DoDu1976}, which omits the
$ \bigl\langle \gamma ^{-1} \llbracket \operatorname{grad} u _h \cdot
\widehat{ n } \rrbracket , \llbracket \operatorname{grad} v _h \cdot
\widehat{ n } \rrbracket \bigr\rangle _{ \partial \mathcal{T} _h } $
term, cf.~\citep[Corollary 3.4]{CoGoLa2009}.

For the vector Laplacian in dimension $ n = 3 $ with $ k = 1 $ (or
$ k = 2 $), $ a _h ^\delta $ and $ a _h ^{ \mathrm{d} } $ give primal
bilinear forms for the minus-grad-div and curl-curl operators (or vice
versa). In particular, we can write
\begin{alignat*}{2}
  a _h ^{\operatorname{curl}} ( u _h , v _h ) = ( \operatorname{curl} u _h , \operatorname{curl} v _h ) _{ \mathcal{T} _h } &+ \bigl\langle \gamma \llbracket u _h \times \widehat{ n } \rrbracket , \llbracket v _h \times \widehat{ n } \rrbracket \bigr\rangle _{ \partial \mathcal{T} _h } &&- \bigl\langle \gamma ^{-1} \llbracket \operatorname{curl} u _h \times \widehat{ n } \rrbracket , \llbracket \operatorname{curl} v _h \times \widehat{ n } \rrbracket \bigr\rangle _{ \partial \mathcal{T} _h } \\
  &+ \bigl\langle \av{ \operatorname{curl} u _h }, \llbracket v _h \times \widehat{ n } \rrbracket \bigr\rangle _{ \partial \mathcal{T} _h } &&+ \bigl\langle \llbracket u _h \times \widehat{ n } \rrbracket , \av{ \operatorname{curl} v _h } \bigr\rangle _{ \partial \mathcal{T} _h } .
\end{alignat*}
Similarly to the scalar case above, the jumps and averages are taken
as
\begin{align*}
  \llbracket v _h \times \widehat{ n } \rrbracket _{e ^\pm } &= \frac{1}{2} \bigl(  v _h \times \widehat{ n } \rvert _{ e^+ } + v _h \times \widehat{ n } \rvert _{ e^- } \bigr) , \\
  \llbracket \operatorname{curl} v _h \times \widehat{ n } \rrbracket _{e ^\pm } &= \frac{1}{2} \bigl( \operatorname{curl} v _h \times \widehat{ n } \rvert _{ e^+ } + \operatorname{curl} v _h \times \widehat{ n } \rvert _{ e^- } \bigr) ,\\
  \av{ \operatorname{curl} v _h } _{ e ^\pm } &= \frac{1}{2} \bigl( \operatorname{curl} v _h \rvert _{ e ^+} + \operatorname{curl} v _h \rvert _{ e ^- } \bigr) . 
\end{align*}
This is closely related to the interior-penalty form proposed by
\citet*{PeScMo2002} for the curl-curl operator, which differs only in
its omission of the
$ \bigl\langle \gamma ^{-1} \llbracket \operatorname{curl} u _h \times
\widehat{ n } \rrbracket , \llbracket \operatorname{curl} v _h \times
\widehat{ n } \rrbracket \bigr\rangle _{ \partial \mathcal{T} _h } $
term---just as the primal form for IP-H differs from that for the
classic IP method in the scalar case. Similarly, for the
minus-grad-div operator, we can write
\begin{alignat*}{2}
  a _h ^{ \operatorname{div} } = ( \operatorname{div} u _h , \operatorname{div} v _h ) _{ \mathcal{T} _h } &+ \bigl\langle \gamma \llbracket u _h \cdot \widehat{ n } \rrbracket , \llbracket v _h \cdot \widehat{ n } \rrbracket \bigr\rangle _{ \partial \mathcal{T} _h } &&- \bigl\langle \gamma ^{-1} \llbracket \operatorname{div} u _h \ \widehat{ n } \rrbracket , \llbracket \operatorname{div} v _h \ \widehat{ n } \rrbracket \bigr\rangle _{ \partial \mathcal{T} _h } \\
  &- \bigl\langle \av{ \operatorname{div} u _h }, \llbracket v _h \cdot n \rrbracket \bigr\rangle _{ \partial \mathcal{T} _h } &&- \bigl\langle \llbracket u _h \cdot n \rrbracket , \av{ \operatorname{div} v _h } \bigr\rangle _{ \partial \mathcal{T} _h } ,
\end{alignat*}
where
\begin{align*}
  \llbracket v _h \cdot \widehat{ n } \rrbracket _{e ^\pm } &= \frac{1}{2} \bigl(  v _h \cdot \widehat{ n } \rvert _{ e^+ } + v _h \cdot \widehat{ n } \rvert _{ e^- } \bigr) , \\
  \llbracket \operatorname{div} v _h \ \widehat{ n } \rrbracket _{e ^\pm } &= \frac{1}{2} \bigl( \operatorname{div} v _h \ \widehat{ n } \rvert _{ e^+ } + \operatorname{div} v _h \ \widehat{ n } \rvert _{ e^- } \bigr) ,\\
  \av{ \operatorname{div} v _h } _{ e ^\pm } &= \frac{1}{2} \bigl( \operatorname{div} v _h \rvert _{ e ^+} + \operatorname{div} v _h \rvert _{ e ^- } \bigr) .
\end{align*}
(Recall that $\gamma$ is a graded operator, so different penalty
coefficients may be taken for $ a _h ^{ \operatorname{curl} } $ and
$ a _h ^{ \operatorname{div} } $.)

\subsection{HDG methods with reduced spaces}
\label{sec:reduced}

We next consider HDG methods with
$ \widehat{ W } _h ^{\mathrm{nor}} = \widehat{ W } _h ^{\mathrm{tan}}
$ rather than
$ \widehat{ W } _h ^{\mathrm{nor}} = L ^2 \Lambda ( \partial
\mathcal{T} _h ) $. As in the preceding two sections, we take broken
tangential traces
\begin{equation*}
  \widehat{ W } _h ^{\mathrm{tan}} ( \partial K ) = \prod _{ e \subset \partial K } \widehat{ W } _h ^{\mathrm{tan}} (e) ,
\end{equation*}
where
$ \widehat{ W } _h ^{\mathrm{tan}} ( e ^+ ) = \widehat{ W } _h
^{\mathrm{tan}} ( e ^- ) $ at interior facets $e$. In particular,
taking
$ \widehat{ W } _h ^{\mathrm{nor}} = \widehat{ W } _h ^{\mathrm{tan}}
$ implies that the resulting method is always strongly conservative,
since
$ \llbracket \widehat{ W } _h ^{\mathrm{nor}} \rrbracket = \av{
  \widehat{ W } _h ^{\mathrm{tan}} } = \ringhat{V} _h ^{\mathrm{tan}}
$.

Let us take the LDG-H flux function
\begin{equation*}
  \Phi ( z _h , \widehat{ z } _h ) = ( \widehat{ z } _h
  ^{\mathrm{nor}} - z _h ^{\mathrm{nor}} ) + \alpha ( \widehat{ z } _h
  ^{\mathrm{tan}} - z _h ^{\mathrm{tan}} ).
\end{equation*}
(For the semilinear Hodge--Laplace problem, a similar approach can be
taken with the IP-H flux.) Since
$ \widehat{ W } _h ^{\mathrm{nor}} = \widehat{ W } _h ^{\mathrm{tan}}
$, it follows from \eqref{eq:weakform_wnor} that
\begin{equation}
  \label{eq:ldg-h_reduced_znor}
  \widehat{ z } _h ^{\mathrm{nor}} = \mathbb{P}  \bigl[ z _h ^{\mathrm{nor}} - \alpha ( \widehat{ z } _h ^{\mathrm{tan}} - z _h ^{\mathrm{tan}} ) \bigr] ,
\end{equation}
where $ \mathbb{P} $ again denotes $ L ^2 $-orthogonal projection onto
$ \widehat{ W } _h ^{\mathrm{tan}} $. If
$ W _h ^{\mathrm{nor}} \subset \widehat{ W } _h ^{\mathrm{tan}} $,
then the projection has no effect on $ z _h ^{\mathrm{nor}} $, and we
recover LDG-H methods with reduced-stabilization penalties already
considered in \cref{sec:ldg-h}.  (Note that, in this case, the
projection is implemented through the choice of
$ \widehat{ W } _h ^{\mathrm{nor}} $ and need not be incorporated into
the operator $\alpha$.) These cases are immediately seen to satisfy
the strong multisymplecticity condition
\eqref{eq:ldg-h_strong_reduced}, since
$ W _h ^{\mathrm{nor}} \subset \widehat{ W } _h ^{\mathrm{tan}} $
implies
$ \llbracket W _h ^{\mathrm{nor}} \rrbracket \subset \av{ \widehat{ W
  } _h ^{\mathrm{tan}} } = \ringhat{V} _h ^{\mathrm{tan}} $.

To restrict our attention to methods not already considered in
\cref{sec:ldg-h}, suppose now that
$ W _h ^{\mathrm{nor}} \not\subset \widehat{ W } _h ^{\mathrm{tan}} $.
Assume that
$ W _h ^{\mathrm{tan}} \subset \widehat{ W } _h ^{\mathrm{tan}} $, so
that the hypothesis of \cref{lem:ldg-h_flux} for multisymplecticity is
satisfied. In these cases, the projection has no effect when
$ \widehat{ z } _h ^{\mathrm{nor}} $ is substituted into
\eqref{eq:weakform_w} and \eqref{eq:weakform_wtan}, and we obtain
precisely the same symmetric system \eqref{eq:ldg-h_weakform} for
$ ( z _h , \widehat{ z } _h ^{\mathrm{tan}} ) $ as we did for the
standard LDG-H method. Hence, the \emph{only} effect of the projection
in these new cases is on $ \widehat{ z } _h ^{\mathrm{nor}} $ itself,
not on the remaining variables.

\begin{theorem}
  Consider the class of methods described above with
  $ \widehat{ W } _h ^{\mathrm{nor}} = \widehat{ W } _h
  ^{\mathrm{tan}} $ and LDG-H flux. If
  $ W _h ^{\mathrm{nor}} \subset \widehat{ W } _h ^{\mathrm{tan}} $ or
  $ W _h ^{\mathrm{tan}} \subset \widehat{ W } _h ^{\mathrm{tan}} $,
  then the method is strongly multisymplectic.
\end{theorem}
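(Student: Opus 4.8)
The plan is to deduce the result from the LDG-H theory of \cref{sec:ldg-h}, handling the two alternative hypotheses separately. The key common observation—already recorded in the discussion above—is that, with $\widehat{W}_h^{\mathrm{nor}} = \widehat{W}_h^{\mathrm{tan}}$, the flux $\Phi$ is \emph{automatically} strongly conservative: any $\widehat{z}_h^{\mathrm{nor}} \in \widehat{W}_h^{\mathrm{nor}}$ satisfies $\llbracket\widehat{z}_h^{\mathrm{nor}}\rrbracket \in \llbracket\widehat{W}_h^{\mathrm{nor}}\rrbracket = \av{\widehat{W}_h^{\mathrm{tan}}} = \ringhat{V}_h^{\mathrm{tan}}$, so \cref{lemma:strong.conservativity} applies. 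By \cref{thm:strong.MS}, it therefore suffices, in each case, to show that the flux $\Phi$ is multisymplectic (equivalently, to identify the method with a multisymplectic LDG-H method), after which strong multisymplecticity comes for free.

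In the case $W_h^{\mathrm{tan}} \subset \widehat{W}_h^{\mathrm{tan}}$, I would observe that $W_h^{\mathrm{tan}} + \widehat{W}_h^{\mathrm{tan}} = \widehat{W}_h^{\mathrm{tan}} = \widehat{W}_h^{\mathrm{nor}}$, which is precisely the hypothesis of \cref{lem:ldg-h_flux}; hence $\Phi$ is multisymplectic, and strong multisymplecticity follows from \cref{thm:strong.MS}. In the case $W_h^{\mathrm{nor}} \subset \widehat{W}_h^{\mathrm{tan}}$, the projection $\mathbb{P}$ in \eqref{eq:ldg-h_reduced_znor} acts as the identity on $z_h^{\mathrm{nor}}$, so substituting into \eqref{eq:weakform_w}--\eqref{eq:weakform_wtan} leaves those equations unchanged and the method coincides with a reduced-stabilization LDG-H method of \cref{sec:ldg-h}; \cref{thm:ldg-h_ms} then gives multisymplecticity, and the reduced-stabilization conservativity condition \eqref{eq:ldg-h_strong_reduced} holds because $W_h^{\mathrm{nor}} \subset \widehat{W}_h^{\mathrm{tan}}$ forces $\llbracket W_h^{\mathrm{nor}}\rrbracket \subset \av{\widehat{W}_h^{\mathrm{tan}}} = \ringhat{V}_h^{\mathrm{tan}}$, so the method is in fact strongly multisymplectic.

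I expect the only delicate point to be the bookkeeping in the second case: one must verify carefully that the orthogonal projection has no effect anywhere except on $\widehat{z}_h^{\mathrm{nor}}$ itself—so that the reduced system really is the one solved by the reduced-stabilization LDG-H method—allowing \cref{thm:ldg-h_ms} to be invoked verbatim. Beyond this, the argument is purely a chain of citations to \cref{lemma:strong.conservativity}, \cref{lem:ldg-h_flux}, \cref{thm:strong.MS}, and \cref{thm:ldg-h_ms}, with no new computation required.
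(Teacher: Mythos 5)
Your proposal is correct and follows essentially the same route as the paper's proof: strong conservativity is automatic from $\widehat{W}_h^{\mathrm{nor}} = \widehat{W}_h^{\mathrm{tan}}$, multisymplecticity of the flux comes from \cref{lem:ldg-h_flux} in the case $W_h^{\mathrm{tan}} \subset \widehat{W}_h^{\mathrm{tan}}$ and from identification with a standard LDG-H method (via \cref{thm:ldg-h_ms}) in the case $W_h^{\mathrm{nor}} \subset \widehat{W}_h^{\mathrm{tan}}$, and \cref{thm:strong.MS} closes the argument. The extra verification of \eqref{eq:ldg-h_strong_reduced} in the second case is harmless but redundant, since strong conservativity was already established.
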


\begin{proof}
  As discussed above, the condition
  $ \widehat{ W } _h ^{\mathrm{nor}} = \widehat{ W } _h
  ^{\mathrm{tan}} $ immediately gives strong conservativity. If
  $ W _h ^{\mathrm{nor}} \subset \widehat{ W } _h ^{\mathrm{tan}} $,
  then the resulting method coincides with a standard LDG-H method,
  which is multisymplectic by \cref{thm:ldg-h_ms}. If
  $ W _h ^{\mathrm{tan}} \subset \widehat{ W } _h ^{\mathrm{tan}} $,
  then the resulting method is multisymplectic by
  \cref{lem:ldg-h_flux}. In either case, we therefore have strong
  multisymplecticity by \cref{thm:strong.MS}.
\end{proof}

For scalar problems, in addition to the LDG-H methods previously
discussed in \cref{sec:ldg-h_scalar}, we also have the following
methods satisfying
$ W _h ^{\mathrm{nor}} \not\subset \widehat{ W } _h ^{\mathrm{tan}} $
and $ W _h ^{\mathrm{tan}} \subset \widehat{ W } _h ^{\mathrm{tan}}
$. For $ k = 0 $, take
\begin{gather*}
  W _h ^0 (K) = \mathcal{P} _{ r -1 } \Lambda ^0 (K) , \qquad W _h ^1 (K) = \mathcal{P} _r \Lambda ^1 (K) ,\\
  \widehat{ W } _h ^{0, \mathrm{nor}} (e) = \widehat{ W } _h ^{0, \mathrm{tan}} (e) = \mathcal{P} _{ r -1 } \Lambda ^0 (e) .
\end{gather*}
The resulting method can be seen as the Hodge dual of the $ k = n $
LDG-H method with \eqref{eq:k=n_r_r-1} and reduced-stabilization
penalty. For $ k = n $, taking
\begin{gather*}
  W _h ^{n-1} (K) = \mathcal{P} _{ r -1 } \Lambda ^{n-1} (K) , \qquad W _h ^n (K) = \mathcal{P} _r \Lambda ^n (K) ,\\
  \widehat{ W } _h ^{n-1, \mathrm{nor}} (e) = \widehat{ W } _h ^{n-1, \mathrm{tan}} (e) = \mathcal{P} _{ r -1 } \Lambda ^{n-1} (e) ,
\end{gather*}
yields the Hodge dual of the $ k = 0 $ LDG-H method with
\eqref{eq:k=0_r_r-1} and reduced-stabilization penalty. This
corresponds to the alternative implementation of reduced stabilization
discussed in \citet[\S1.2.2.3]{Lehrenfeld2010}.

\begin{remark}
  For linear problems, \citeauthor{Lehrenfeld2010} lists the
  singularity of the local Neumann solvers as a ``computational
  disadvantage[]'' of the $ k = n $ implementation. However, this
  issue can be remedied, as in \citet[Section 5]{Cockburn2016} and
  \citet{AwFaGuSt2023}, by introducing an additional
  piecewise-constant global variable $ \overline{ u } _h $,
  corresponding to the element-wise average of $ u _h $, so that the
  local solvers depend on
  $ ( \widehat{ \sigma } _h ^{\mathrm{tan}} , \overline{ u } _h ) $
  rather than $ \widehat{ \sigma } _h ^{\mathrm{tan}} $ alone.
\end{remark}

In the case $ \alpha = 0 $, the $ k = n $ method above coincides with
the NC-H method, whose multisymplecticity for de~Donder--Weyl systems
was shown in \citet{McSt2020}. In particular, for the semilinear
Poisson equation, \eqref{eq:ldg-h_weakform} becomes 
\begin{alignat*}{2}
  ( \delta u _h , \tau _h ) _{ \mathcal{T} _h } &= ( \sigma _h , \tau _h ) _{ \mathcal{T} _h } , \quad &\forall \tau _h &\in W _h ^{ n -1 } ,\\
  ( \sigma _h , \delta v _h ) _{ \mathcal{T} _h } + \langle \widehat{ \sigma } _h ^{\mathrm{tan}} , v _h ^{\mathrm{nor}} \rangle _{ \partial \mathcal{T} _h } &= \biggl( \frac{ \partial F }{ \partial u _h }  , v _h \biggr) _{ \mathcal{T} _h }, \quad &\forall v _h &\in W _h ^n ,\\
  \langle u _h ^{\mathrm{nor}} , \widehat{ \tau } _h ^{\mathrm{tan}} \rangle _{ \partial \mathcal{T} _h } &= 0, \quad &\forall \widehat{ \tau } _h ^{\mathrm{tan}} &\in \ringhat{V} _h ^{\mathrm{tan}} .
\end{alignat*}
The first equation allows us to eliminate $ \sigma _h = \delta u _h $,
substituting into the second equation to obtain
\begin{alignat*}{2}
  ( \delta u _h , \delta v _h ) _{ \mathcal{T} _h } + \langle \widehat{ \sigma } _h ^{\mathrm{tan}} , v _h ^{\mathrm{nor}} \rangle _{ \partial \mathcal{T} _h } &= \biggl( \frac{ \partial F }{ \partial u _h }  , v _h \biggr) _{ \mathcal{T} _h }, \quad &\forall v _h &\in W _h ^n ,\\
  \langle u _h ^{\mathrm{nor}} , \widehat{ \tau } _h ^{\mathrm{tan}} \rangle _{ \partial \mathcal{T} _h } &= 0, \quad &\forall \widehat{ \tau } _h ^{\mathrm{tan}} &\in \ringhat{V} _h ^{\mathrm{tan}} .
\end{alignat*}
In the linear case, this is the primal-hybrid nonconforming method of
\citet{RaTh1977_hybrid}.

\begin{remark}
  \label{rmk:nc-h_dirichlet}
  The Hodge dual of this nonconforming method, using local Dirichlet
  rather than Neumann solvers, recovers the NC-H method described in
  \citet{CoGoLa2009}. Specifically, for $ k = 0 $, we take the spaces
  \begin{gather*}
    W _h ^0 (K) = \mathcal{P} _r \Lambda ^0 (K) , \qquad W _h ^1 (K) = \mathcal{P} _{r-1} \Lambda ^1 (K) ,\\
    \widehat{ W } _h ^{0, \mathrm{nor}} (e) = \widehat{ W } _h ^{0, \mathrm{tan}} (e) = \mathcal{P} _{ r -1 } \Lambda ^0 (e) ,
  \end{gather*}
  along with the local flux function
  \begin{equation*}
    \Phi ( z _h , \widehat{ z } _h ) = \widehat{ z } _h
    ^{\mathrm{tan}} - z _h ^{\mathrm{tan}} ,
  \end{equation*}
  which is multisymplectic by \cref{lem:afw-h_flux}. For the
  semilinear Poisson equation, we obtain
  \begin{alignat*}{2}
    ( \mathrm{d} u _h , \mathrm{d} v _h ) _{ \mathcal{T} _h } - \langle \widehat{ \rho } _h ^{\mathrm{nor}} , v _h ^{\mathrm{tan}} \rangle _{ \partial \mathcal{T} _h } &= \biggl( \frac{ \partial F }{ \partial u _h } , v _h \biggr) _{ \mathcal{T} _h } , \quad &\forall v _h &\in W _h ^0 , \\
    \langle \widehat{ u } _h ^{\mathrm{tan}} - u _h ^{\mathrm{tan}} , \widehat{ \eta } _h ^{\mathrm{nor}} \rangle _{ \partial \mathcal{T} _h } &= 0, \quad &\forall \widehat{ \eta } _h ^{\mathrm{nor}} &\in \widehat{ W } _h ^{0, \mathrm{nor}} , \\
    \langle \widehat{ \rho } _h ^{\mathrm{nor}} , \widehat{ v } _h ^{\mathrm{tan}} \rangle _{ \partial \mathcal{T} _h } &= 0 , \quad &\forall \widehat{ v } _h ^{\mathrm{tan}} &\in \ringhat{V} _h ^{0, \mathrm{tan}} ,
  \end{alignat*}
  after having eliminated $ \rho _h = \mathrm{d} u _h $.
\end{remark}

\subsection{Primal-hybrid nonconforming methods}

We now discuss a different generalization of the NC-H method for the
semilinear $k$-form Hodge--Laplace problem. In the linear case with
$ k = 1 $ and $ n = 2 $, this includes the nonconforming primal-hybrid
method of \citet*{BaCaSt2024} for the vector Poisson equation, whose
lowest-order case is a hybridization of the
$ \mathcal{P} _1 $-nonconforming method of \citet{BrCuLiSu2008}. While
this family of methods does \emph{not} fit into the framework of
\cref{sec:multisymplectic_hybrid}, except in some special cases,
multisymplecticity can nevertheless be proved directly.

Given $k$, suppose we have broken spaces
\begin{equation*}
  W _h ^k \coloneqq \prod _{ K \in \mathcal{T} _h } W _h ^k (K) , \qquad \widehat{ W } _h ^{k-1, \mathrm{nor}} \coloneqq \prod _{ e \in \partial \mathcal{T} _h } \widehat{ W } _h ^{k-1, \mathrm{nor}} (e) , \qquad \widehat{ W } _h ^{k, \mathrm{tan}} \coloneqq \prod _{ e \in \partial \mathcal{T} _h } \widehat{ W } _h ^{k, \mathrm{tan}} (e) ,
\end{equation*}
where
$ \widehat{ W } _h ^{k-1, \mathrm{nor}} (e^+) = \widehat{ W } _h
^{k-1, \mathrm{nor}} (e^-) $ and
$ \widehat{ W } _h ^{k, \mathrm{tan}} (e^+) = \widehat{ W } _h ^{k,
  \mathrm{tan}} (e^-) $ at interior facets. Furthermore, suppose that
we have broken trace subspaces
\begin{equation*}
  \overline{ W } _h ^{k-1, \mathrm{tan}} \coloneqq \prod _{ e \in \partial \mathcal{T} _h } \overline{ W } _h ^{k-1, \mathrm{tan}} (e) , \qquad \overline{ W } _h ^{k, \mathrm{nor}} \coloneqq \prod _{ e \in \partial \mathcal{T} _h } \overline{ W } _h ^{k, \mathrm{nor}} (e) ,
\end{equation*}
where
$ \overline{ W } _h ^{k-1, \mathrm{tan}} (e) \subset \widehat{ W } _h
^{k-1, \mathrm{nor}} (e) $ and
$ \overline{ W } _h ^{k, \mathrm{nor}} (e) \subset \widehat{ W } _h
^{k, \mathrm{tan}} (e) $ for each $ e \in \partial \mathcal{T} _h
$. Finally, let
\begin{equation*}
  \ringhat{V} _h ^{k-1, \mathrm{nor}} \subset \widehat{ V } _h ^{k-1, \mathrm{nor}} \subset \widehat{ W } _h ^{k-1, \mathrm{nor}} , \qquad   \ringhat{V} _h ^{k, \mathrm{tan}} \subset \widehat{ V } _h ^{k, \mathrm{tan}} \subset \widehat{ W } _h ^{k, \mathrm{tan}} ,
\end{equation*}
be the single-valued trace subspaces, where the ring denotes traces
vanishing on $ \partial \Omega $.

Following \citep{BaCaSt2024}, we seek $ u _h \in W _h ^k $,
$ \overline{ \sigma } _h ^{\mathrm{tan}} \in \overline{ W } _h ^{k-1,
  \mathrm{tan}} $,
$ \overline{ \rho } _h ^{\mathrm{nor}} \in \overline{ W } _h ^{k,
  \mathrm{nor}} $,
$ \widehat{ u } _h ^{\mathrm{nor}} \in \widehat{ V } _h
^{k-1,\mathrm{nor}} $, and
$ \widehat{ u } _h ^{\mathrm{tan}} \in \widehat{ V } _h ^{k,
  \mathrm{tan}} $ satisfying
\begin{subequations}
  \label{eq:bcs}
  \begin{alignat}{2}
    ( \delta u _h , \delta v _h ) _{ \mathcal{T} _h } + ( \mathrm{d} u _h , \mathrm{d} v _h ) _{ \mathcal{T} _h } + \langle \widehat{ \sigma } _h ^{\mathrm{tan}} , v _h ^{\mathrm{nor}} \rangle _{ \partial \mathcal{T} _h } - \langle \widehat{ \rho } _h ^{\mathrm{nor}} , v _h ^{\mathrm{tan}} \rangle _{ \partial \mathcal{T} _h } &=  \biggl( \frac{ \partial F }{ \partial u _h } , v _h \biggr) _{ \mathcal{T} _h } , \quad &\forall v _h &\in W _h ^k , \label{eq:bcs_v} \\
    \langle \widehat{ u } _h ^{\mathrm{nor}} - u _h ^{\mathrm{nor}} , \overline{ \tau } _h ^{\mathrm{tan}} \rangle _{ \partial \mathcal{T} _h } &= 0 , \quad &\forall \overline{ \tau } _h ^{\mathrm{tan}} &\in \overline{ W } _h ^{ k -1 , \mathrm{tan}}, \label{eq:bcs_taubar} \\
    \langle \widehat{ u } _h ^{\mathrm{tan}} - u _h ^{\mathrm{tan}} , \overline{ \eta } _h ^{\mathrm{nor}} \rangle _{ \partial \mathcal{T} _h } &= 0 , \quad &\forall \overline{ \eta } _h ^{\mathrm{nor}} &\in \overline{ W } _h ^{ k , \mathrm{nor}}, \label{eq:bcs_etabar} \\
    \langle \widehat{ \sigma } _h ^{\mathrm{tan}} , \widehat{ v } _h ^{\mathrm{nor}} \rangle _{ \partial \mathcal{T} _h } &= 0 , \quad &\forall \widehat{ v } _h ^{\mathrm{nor}} &\in \ringhat{V} _h ^{k-1, \mathrm{nor}} , \label{eq:bcs_vnor}\\
    \langle \widehat{ \rho } _h ^{\mathrm{nor}} , \widehat{ v } _h ^{\mathrm{tan}} \rangle _{ \partial \mathcal{T} _h } &= 0 , \quad &\forall \widehat{ v } _h ^{\mathrm{tan}} &\in \ringhat{V} _h ^{k, \mathrm{tan}} \label{eq:bcs_vtan},
  \end{alignat}
\end{subequations}
where $ \widehat{ \sigma } _h ^{\mathrm{tan}} $ and
$ \widehat{ \rho } _h ^{\mathrm{nor}} $ are numerical fluxes defined
by
\begin{equation*}
  \widehat{ \sigma } _h ^{\mathrm{tan}} \coloneqq \overline{ \sigma } _h ^{\mathrm{tan}} - \beta ( \widehat{ u } _h ^{\mathrm{nor}} - u _h ^{\mathrm{nor}} ) , \qquad \widehat{ \rho } _h ^{\mathrm{nor}} \coloneqq \overline{ \rho } _h ^{\mathrm{nor}} - \alpha ( \widehat{ u } _h ^{\mathrm{tan}} - u _h ^{\mathrm{tan}} ) .
\end{equation*}

\begin{remark}
  \label{rmk:bcs_tan}
  In contrast with the preceding sections, the global variables here
  are $ \widehat{ u } _h ^{\mathrm{nor}} $ and
  $ \widehat{ u } _h ^{\mathrm{tan}} $. However, if
  $ \bigl\llbracket \beta ( W _h ^{k-1,\mathrm{nor}} + \widehat{ W }
  _h ^{k-1, \mathrm{nor}} ) \bigr\rrbracket \subset \ringhat{ V } _h
  ^{k-1, \mathrm{nor}} $, so that
  $ \widehat{ \sigma } _h ^{\mathrm{tan}} $ is single-valued, then we
  can obtain an equivalent hybridization on
  $ \widehat{ \sigma } _h ^{\mathrm{tan}} \in \widehat{ V } _h ^{k-1,
    \mathrm{tan}} \coloneqq \widehat{ V } _h ^{k-1, \mathrm{nor}} $
  and
  $ \widehat{ u } _h ^{\mathrm{tan}} \in \widehat{ V } _h ^{k,
    \mathrm{tan}} $, replacing \eqref{eq:bcs_vnor} by
  \begin{equation}
    \langle \widehat{ u } _h ^{\mathrm{nor}} , \widehat{ \tau } _h ^{\mathrm{tan}} \rangle _{ \partial \mathcal{T} _h } = 0 , \quad \forall \widehat{ \tau } _h ^{\mathrm{tan}} \in \ringhat{V} _h ^{k-1, \mathrm{tan}} . \tag{\ref*{eq:bcs_vnor}$^\prime$}
  \end{equation} 
\end{remark}
  
\begin{example}[the NC-H method]
  Consider the scalar case $ k = 0 $ with vanishing penalty
  $ \alpha = 0 $, and take the spaces
  \begin{equation*}
    W _h ^0 (K) = \mathcal{P} _r \Lambda ^0 (K) , \qquad \overline{ W } _h ^{0, \mathrm{tan}} (e) = \widehat{ W } _h ^{0, \mathrm{tan}} (e) = \mathcal{P} _{ r -1 } \Lambda ^0 (e) .
  \end{equation*}
  Then \eqref{eq:bcs} coincides with the NC-H method with local
  Dirichlet solvers in \cref{rmk:nc-h_dirichlet}. Since \eqref{eq:bcs}
  is symmetric with respect to the Hodge star, we get essentially the
  same method for $ k = n $, $ \beta = 0 $.
\end{example}

\begin{example}[the primal-hybrid nonconforming method of \citep{BaCaSt2024}]
  \label{ex:bcs}
  When $ k = 1 $ and $ n = 2 $, take
  \begin{equation*}
    W _h ^k (K) = \mathcal{P} _{ 2 r -1 } \Lambda ^1 (K) , \qquad \widehat{ W } _h ^{0, \mathrm{nor}} (e) = \mathcal{P} _{ 2 r -1 } \Lambda ^0 (e) , \qquad \widehat{ W } _h ^{1, \mathrm{tan}} (e) = \mathcal{P} _{ 2 r -1 } \Lambda ^1 (e) ,
  \end{equation*}
  and trace subspaces
  \begin{equation*}
    \overline{ W } _h ^{0, \mathrm{tan}} (e) = \mathcal{P} _{ r -1 } \Lambda ^0 (e) , \qquad \overline{ W } _h ^{1, \mathrm{nor}} (e) = \mathcal{P} _{ r -1 } \Lambda ^1 (e) .
  \end{equation*}
  The linear case recovers the method in \citet*{BaCaSt2024}, whose
  lowest-order version hybridizes the $ \mathcal{P} _1 $-nonconforming
  method of \citet{BrCuLiSu2008}. We note that degree $ 2 r -1 $ is
  required for the higher-degree spaces, rather than merely degree
  $r$, because this admits a projection (due to \citet{BrSu2009} and
  \citet{Mirebeau2012}) that commutes with both $ \delta $ and
  $ \mathrm{d} $. A special choice of piecewise-constant $\alpha$ and
  $\beta$, depending on the distance to corners of $\partial \Omega$
  at which singularities may form, is required to ensure consistency;
  see \citep{BaCaSt2024,BrCuLiSu2008} and references therein.
\end{example}

\begin{theorem}
  The primal-hybrid nonconforming method \eqref{eq:bcs} is
  multisymplectic. Furthermore, if
  \begin{equation*}
    \bigl\llbracket \beta ( W _h ^{k-1,\mathrm{nor}} + \widehat{ W } _h ^{k-1, \mathrm{nor}} ) \bigr\rrbracket \subset \ringhat{ V } _h ^{k-1, \mathrm{nor}}, \qquad     \bigav{ \alpha ( W _h ^{k,\mathrm{tan}} + \widehat{ W } _h ^{k, \mathrm{tan}} ) } \subset \ringhat{ V } _h ^{k, \mathrm{tan}},
  \end{equation*}
  then it is strongly multisymplectic. For piecewise-constant
  penalties, these conditions reduce to
  \begin{equation*}
    \llbracket W _h ^{k-1,\mathrm{nor}} \rrbracket \subset \ringhat{ V } _h ^{k-1, \mathrm{nor}}, \qquad  \av{ W _h ^{k,\mathrm{tan}} } \subset \ringhat{ V } _h ^{k, \mathrm{tan}}.
  \end{equation*}
  For reduced-stabilization penalties, where $\beta$ and $\alpha$
  project onto $ \widehat{ W } _h ^{k-1, \mathrm{nor}} $ and
  $ \widehat{ W } _h ^{k, \mathrm{tan}} $, respectively, strong
  multisymplecticity holds unconditionally.
\end{theorem}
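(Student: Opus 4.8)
Since \eqref{eq:bcs} falls outside the framework of \cref{sec:multisymplectic_hybrid}, the plan is to argue directly. First I would fix what a variation is: the left-hand side of \eqref{eq:bcs} is affine-linear in the unknowns except for the term $\partial F/\partial u_h$, so a \emph{(first) variation} is a tuple $(v_i,\overline{\tau}_i^{\mathrm{tan}},\overline{\eta}_i^{\mathrm{nor}},\widehat{v}_i^{\mathrm{nor}},\widehat{v}_i^{\mathrm{tan}})$ solving \eqref{eq:bcs} with $\partial F/\partial u_h$ replaced by $(\partial^2 F/\partial u_h^2)v_i$; its bulk part is $\delta v_i\oplus v_i\oplus\mathrm{d}v_i$, and its numerical fluxes are $\widehat{\tau}_i^{\mathrm{tan}}=\overline{\tau}_i^{\mathrm{tan}}-\beta(\widehat{v}_i^{\mathrm{nor}}-v_i^{\mathrm{nor}})$ and $\widehat{\eta}_i^{\mathrm{nor}}=\overline{\eta}_i^{\mathrm{nor}}-\alpha(\widehat{v}_i^{\mathrm{tan}}-v_i^{\mathrm{tan}})$. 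Writing $\widehat{w}_i^{\mathrm{nor}}=\widehat{v}_i^{\mathrm{nor}}\oplus\widehat{\eta}_i^{\mathrm{nor}}$ and $\widehat{w}_i^{\mathrm{tan}}=\widehat{\tau}_i^{\mathrm{tan}}\oplus\widehat{v}_i^{\mathrm{tan}}$ as in \cref{ex:mscl_jump_HL}, multisymplecticity is the statement that $\langle\widehat{\tau}_1^{\mathrm{tan}},\widehat{v}_2^{\mathrm{nor}}\rangle_{\partial K}+\langle\widehat{v}_1^{\mathrm{tan}},\widehat{\eta}_2^{\mathrm{nor}}\rangle_{\partial K}$ is symmetric under $1\leftrightarrow 2$ for each $K\in\mathcal{T}_h$, and strong multisymplecticity is the same statement with $\partial K$ replaced by $\partial(\overline{\bigcup\mathcal{K}})$ for arbitrary $\mathcal{K}\subset\mathcal{T}_h$.

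For local multisymplecticity I would substitute the flux formulas into each pairing and use the broken equations \eqref{eq:bcs_taubar} and \eqref{eq:bcs_etabar}---which, since $\overline{W}_h^{k-1,\mathrm{tan}}$ and $\overline{W}_h^{k,\mathrm{nor}}$ are broken over facets, hold on the facets of any single $K$---to trade $\widehat{v}_j^{\mathrm{nor}}$ for $v_j^{\mathrm{nor}}$ (resp.\ $\widehat{v}_j^{\mathrm{tan}}$ for $v_j^{\mathrm{tan}}$) whenever they are paired against a ``$\overline{\cdot}$'' factor, and then re-expand the ``$\overline{\cdot}$'' factor through its flux formula. Regrouping, the stabilization contributions combine into $\langle\beta(\widehat{v}_1^{\mathrm{nor}}-v_1^{\mathrm{nor}}),\widehat{v}_2^{\mathrm{nor}}-v_2^{\mathrm{nor}}\rangle_{\partial K}$ and $\langle\alpha(\widehat{v}_1^{\mathrm{tan}}-v_1^{\mathrm{tan}}),\widehat{v}_2^{\mathrm{tan}}-v_2^{\mathrm{tan}}\rangle_{\partial K}$, which are symmetric in $1\leftrightarrow 2$ because $\alpha$ and $\beta$ are facet-local and symmetric, and hence drop out of the antisymmetrization. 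What remains is $\langle\widehat{\tau}_1^{\mathrm{tan}},v_2^{\mathrm{nor}}\rangle_{\partial K}-\langle\widehat{\eta}_1^{\mathrm{nor}},v_2^{\mathrm{tan}}\rangle_{\partial K}$; taking the test function in the linearized \eqref{eq:bcs_v} equal to $v_2$ on $K$ and to $0$ elsewhere, doing the same with $v_1$, and subtracting the two identities cancels the volume terms $(\delta v_1,\delta v_2)_K+(\mathrm{d}v_1,\mathrm{d}v_2)_K$ and the $(\partial^2 F/\partial u_h^2)$ term (symmetric Hessian), leaving exactly $\langle\widehat{\tau}_1^{\mathrm{tan}},v_2^{\mathrm{nor}}\rangle_{\partial K}-\langle\widehat{\eta}_1^{\mathrm{nor}},v_2^{\mathrm{tan}}\rangle_{\partial K}=\langle\widehat{\tau}_2^{\mathrm{tan}},v_1^{\mathrm{nor}}\rangle_{\partial K}-\langle\widehat{\eta}_2^{\mathrm{nor}},v_1^{\mathrm{tan}}\rangle_{\partial K}$. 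Note that \eqref{eq:bcs_vnor} and \eqref{eq:bcs_vtan} are not needed here.

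For strong multisymplecticity I would sum the local law over $\mathcal{K}\subset\mathcal{T}_h$ and, exactly as in \cref{thm:strong.MS}, reduce matters to showing that interior-facet contributions cancel in pairs. Since $\widehat{v}_i^{\mathrm{nor}}\in\widehat{V}_h^{k-1,\mathrm{nor}}$ flips sign across $e^\pm$ and $\widehat{v}_i^{\mathrm{tan}}\in\widehat{V}_h^{k,\mathrm{tan}}$ does not, this amounts to the numerical fluxes $\widehat{\sigma}_h^{\mathrm{tan}}$ and $\widehat{\rho}_h^{\mathrm{nor}}$, hence their variations $\widehat{\tau}_i^{\mathrm{tan}}$, $\widehat{\eta}_i^{\mathrm{nor}}$, being single-valued: under the first hypothesis, \eqref{eq:bcs_vnor} and the orthogonality formula of \cref{prop:average-jump} force $\jump{\widehat{\tau}_i^{\mathrm{tan}}}=0$ on interior facets, and under the second, \eqref{eq:bcs_vtan} and \cref{prop:average-jump} force $\jump{\widehat{\eta}_i^{\mathrm{nor}}}=0$ there---precisely the analogue of \cref{lemma:strong.conservativity} in this setting. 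The piecewise-constant and reduced-stabilization reductions then follow by the same manipulations as in \cref{thm:ldg-h_ms} and \cref{thm:ip-h_ms}: for piecewise-constant $\alpha,\beta$ one has $\beta\widehat{W}_h^{k-1,\mathrm{nor}}\subset\widehat{W}_h^{k-1,\mathrm{nor}}$ and $\alpha\widehat{W}_h^{k,\mathrm{tan}}\subset\widehat{W}_h^{k,\mathrm{tan}}$, so the $\widehat{W}$-parts of the hypotheses are automatic, while for the reduced-stabilization penalty the projections make both inclusions hold unconditionally. The main obstacle I anticipate is this last step: carefully verifying that the stated inclusions really deliver single-valuedness of $\widehat{\sigma}_h^{\mathrm{tan}}$ and $\widehat{\rho}_h^{\mathrm{nor}}$, keeping straight which of the normal/tangential jump and average conventions of \cref{def:jump_avg} attaches to each of these fluxes and to $\overline{\sigma}_h^{\mathrm{tan}},\overline{\rho}_h^{\mathrm{nor}}$, and checking that the facets where cancellation is needed are genuinely interior so that the $\partial\Omega$-exceptions in \cref{def:jump_avg} do not interfere. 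The local-multisymplecticity calculation, by contrast, is a routine rearrangement once the flux formulas and the brokenness of the test spaces are used.
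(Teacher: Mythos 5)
Your proposal follows essentially the same route as the paper's proof: local multisymplecticity by substituting the flux formulas, using \eqref{eq:bcs_taubar}--\eqref{eq:bcs_etabar} to isolate the symmetric $\alpha$- and $\beta$-stabilization terms, and invoking the linearized \eqref{eq:bcs_v} to symmetrize the remainder; strong multisymplecticity by deducing single-valuedness of the numerical fluxes from \eqref{eq:bcs_vnor}--\eqref{eq:bcs_vtan} and then cancelling interior facets as in \cref{thm:strong.MS}, with the penalty-specific reductions handled as in \cref{thm:ldg-h_ms,thm:ip-h_ms}. Your element-by-element localization of the test functions in the linearized \eqref{eq:bcs_v} is, if anything, slightly more careful than the paper's $\partial\mathcal{T}_h$-level bookkeeping, and the subtlety you flag about jump/average conventions is precisely the point the paper also treats only by assertion.
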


\begin{proof}
  The proof of multisymplecticity is similar to that of \citep[Lemma
  2.8]{BaCaSt2024}, which establishes symmetry of the condensed
  bilinear form for the method in \cref{ex:bcs}.

  Let
  $ ( v _i , \overline{ \tau } ^{\mathrm{tan}} _i , \overline{ \eta }
  _i ^{\mathrm{nor}} , \widehat{ v } _i ^{\mathrm{nor}} , \widehat{ v
  } _i ^{\mathrm{tan}} ) $ be variations of a solution to
  \eqref{eq:bcs} for $ i = 1, 2 $, meaning that they satisfy
  \eqref{eq:bcs} except with $ \partial F / \partial u _h $ replaced
  by the linearization $ ( \partial ^2 F / \partial u _h ^2 ) v _i $
  on the right-hand side of \eqref{eq:bcs_v}. We wish to show that
  \begin{equation*}
    \langle \widehat{ \tau } _1 ^{\mathrm{tan}} , \widehat{ v } _2 ^{\mathrm{nor}} \rangle _{ \partial \mathcal{T} _h } + \langle \widehat{ v } _1 ^{\mathrm{tan}} , \widehat{ \eta } _2 ^{\mathrm{nor}} \rangle _{ \partial \mathcal{T} _h } = \langle \widehat{ \tau } _2 ^{\mathrm{tan}} , \widehat{ v } _1 ^{\mathrm{nor}} \rangle _{ \partial \mathcal{T} _h } + \langle \widehat{ v } _2 ^{\mathrm{tan}} , \widehat{ \eta } _1 ^{\mathrm{nor}} \rangle _{ \partial \mathcal{T} _h } ,
  \end{equation*}
  or equivalently,
  \begin{equation*}
    \langle \widehat{ \tau } _1 ^{\mathrm{tan}} , \widehat{ v } _2 ^{\mathrm{nor}} \rangle _{ \partial \mathcal{T} _h } - \langle  \widehat{ \eta } _1 ^{\mathrm{nor}}, \widehat{ v } _2 ^{\mathrm{tan}} \rangle _{ \partial \mathcal{T} _h } = \langle \widehat{ \tau } _2 ^{\mathrm{tan}} , \widehat{ v } _1 ^{\mathrm{nor}} \rangle _{ \partial \mathcal{T} _h } - \langle \widehat{ \eta } _2 ^{\mathrm{nor}}, \widehat{ v } _1 ^{\mathrm{tan}} \rangle _{ \partial \mathcal{T} _h }  .
  \end{equation*}
  First, observe that
  \begin{align*}
    \langle \widehat{ \tau } _1 ^{\mathrm{tan}} , \widehat{ v } _2 ^{\mathrm{nor}} \rangle _{ \partial \mathcal{T} _h }
    &= \langle \widehat{ \tau } _1 ^{\mathrm{tan}} , \widehat{ v } _2 ^{\mathrm{nor}} - v _2 ^{\mathrm{nor}} \rangle _{ \partial \mathcal{T} _h } + \langle \widehat{ \tau } _1 ^{\mathrm{tan}} , v _2 ^{\mathrm{nor}} \rangle _{ \partial \mathcal{T} _h } \\
    &= \bigl\langle \overline{ \tau } _1 ^{\mathrm{tan}} - \beta ( \widehat{ v } _1 ^{\mathrm{nor}} - v _1 ^{\mathrm{nor}} ) , \widehat{ v } _2 ^{\mathrm{nor}} - v _2 ^{\mathrm{nor}} \rangle _{ \partial \mathcal{T} _h } + \langle \widehat{ \tau } _1 ^{\mathrm{tan}} , v _2 ^{\mathrm{nor}} \rangle _{ \partial \mathcal{T} _h },\\
    &= -\bigl\langle \beta ( \widehat{ v } _1 ^{\mathrm{nor}} - v _1 ^{\mathrm{nor}} ) , \widehat{ v } _2 ^{\mathrm{nor}} - v _2 ^{\mathrm{nor}} \bigr\rangle _{ \partial \mathcal{T} _h } + \langle \widehat{ \tau } _1 ^{\mathrm{tan}} , v _2 ^{\mathrm{nor}} \rangle _{ \partial \mathcal{T} _h },
  \end{align*}
  where the last line uses the fact that $ \overline{ \tau } _1 ^{\mathrm{tan}} $ is orthogonal to $ \widehat{ v } _2 ^{\mathrm{nor}} - v _2 ^{\mathrm{nor}} $ by \eqref{eq:bcs_taubar}. Similarly,
  \begin{equation*}
    \langle \widehat{ \eta } _1 ^{\mathrm{nor}} , \widehat{ v } _2 ^{\mathrm{tan}} \rangle _{ \partial \mathcal{T} _h } = -\bigl\langle \alpha ( \widehat{ v } _1 ^{\mathrm{tan}} - v _1 ^{\mathrm{tan}} ) , \widehat{ v } _2 ^{\mathrm{tan}} - v _2 ^{\mathrm{tan}} \bigr\rangle _{ \partial \mathcal{T} _h } + \langle \widehat{ \eta } _1 ^{\mathrm{nor}} , v _2 ^{\mathrm{tan}} \rangle _{ \partial \mathcal{T} _h } .
  \end{equation*}
  Subtracting these and applying the linearization of
  \eqref{eq:bcs_v}, we get
  \begin{multline*}
    \langle \widehat{ \tau } _1 ^{\mathrm{tan}} , \widehat{ v } _2 ^{\mathrm{nor}} \rangle _{ \partial \mathcal{T} _h } - \langle \widehat{ \eta } _1 ^{\mathrm{nor}} , \widehat{ v } _2 ^{\mathrm{tan}} \rangle _{ \partial \mathcal{T} _h } = - \bigl\langle \beta ( \widehat{ v } _1 ^{\mathrm{nor}} - v _1 ^{\mathrm{nor}} ) , \widehat{ v } _2 ^{\mathrm{nor}} - v _2 ^{\mathrm{nor}} \bigr\rangle _{ \partial \mathcal{T} _h } + \bigl\langle \alpha ( \widehat{ v } _1 ^{\mathrm{tan}} - v _1 ^{\mathrm{tan}} ) , \widehat{ v } _2 ^{\mathrm{tan}} - v _2 ^{\mathrm{tan}} \bigr\rangle _{ \partial \mathcal{T} _h } \\
    + \biggl( \frac{ \partial ^2 F }{ \partial u _h ^2 } v _1 , v _2 \biggr) _{ \mathcal{T} _h } - ( \delta v _1 , \delta v _2 ) _{ \mathcal{T} _h } - ( \mathrm{d} v _1 , \mathrm{d} v _2 ) _{ \mathcal{T} _h } .
  \end{multline*}
  This is symmetric in the indices $ i = 1 , 2 $, so the local
  multisymplectic conservation law holds.

  Furthermore, the additional conditions imply that the numerical
  fluxes are single-valued, by \eqref{eq:bcs_vnor} and
  \eqref{eq:bcs_vtan}, and strong multisymplecticity follows by the
  same argument as in \cref{thm:strong.MS}. Note that
  $ \av{ \overline{ W } _h ^{k-1, \mathrm{tan}} } \subset \ringhat{V}
  _h ^{k-1, \mathrm{nor}} $ and
  $ \llbracket \overline{ W } _h ^{k, \mathrm{nor}} \rrbracket \subset
  \ringhat{V} _h ^{k, \mathrm{tan}} $ hold automatically, by the
  assumptions that
  $ \overline{ W } _h ^{k-1, \mathrm{tan}} (e) \subset \widehat{ W }
  _h ^{k-1, \mathrm{nor}} (e) $ and
  $ \overline{ W } _h ^{k, \mathrm{nor}} (e) \subset \widehat{ W } _h
  ^{k, \mathrm{tan}} (e) $ for $ e \in \partial \mathcal{T} _h $,
  which is why the conditions have fewer terms than in
  \cref{thm:ldg-h_ms} or \cref{thm:ip-h_ms}.
\end{proof}

\begin{corollary}
  The primal-hybrid nonconforming method of \citep{BaCaSt2024} is
  strongly multisymplectic.
\end{corollary}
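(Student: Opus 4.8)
The plan is to recognize the method of \citet{BaCaSt2024} as the special case of the general primal-hybrid nonconforming method \eqref{eq:bcs} recorded in \cref{ex:bcs}: namely $ k = 1 $, $ n = 2 $, with
\begin{equation*}
  W _h ^1 (K) = \mathcal{P} _{ 2 r -1 } \Lambda ^1 (K) , \quad \widehat{ W } _h ^{0, \mathrm{nor}} (e) = \mathcal{P} _{ 2 r -1 } \Lambda ^0 (e) , \quad \widehat{ W } _h ^{1, \mathrm{tan}} (e) = \mathcal{P} _{ 2 r -1 } \Lambda ^1 (e) ,
\end{equation*}
trace subspaces $ \overline{ W } _h ^{0, \mathrm{tan}} (e) = \mathcal{P} _{ r -1 } \Lambda ^0 (e) $ and $ \overline{ W } _h ^{1, \mathrm{nor}} (e) = \mathcal{P} _{ r -1 } \Lambda ^1 (e) $, and piecewise-constant penalties $ \alpha , \beta $. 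The corner-dependent choice required for consistency is still piecewise constant, so it is covered by the preceding theorem; moreover, since $ r - 1 \leq 2 r - 1 $, the structural hypotheses $ \overline{ W } _h ^{0, \mathrm{tan}} (e) \subset \widehat{ W } _h ^{0, \mathrm{nor}} (e) $ and $ \overline{ W } _h ^{1, \mathrm{nor}} (e) \subset \widehat{ W } _h ^{1, \mathrm{tan}} (e) $ hold. I would then invoke the preceding theorem directly: multisymplecticity holds for any symmetric penalties, hence unconditionally here, and strong multisymplecticity follows once the piecewise-constant strong-conservativity conditions
\begin{equation*}
  \llbracket W _h ^{0, \mathrm{nor}} \rrbracket \subset \ringhat{V} _h ^{0, \mathrm{nor}} , \qquad \av{ W _h ^{1, \mathrm{tan}} } \subset \ringhat{V} _h ^{1, \mathrm{tan}}
\end{equation*}
are checked.

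The one substantive step is this verification, and it is short. On a facet $ e $ of a triangle $ K \subset \mathbb{R} ^2 $, the normal trace of a $ 1 $-form in $ \mathcal{P} _{ 2 r -1 } \Lambda ^1 (K) $ is a $ 0 $-form lying in $ \mathcal{P} _{ 2 r -1 } \Lambda ^0 (e) = \widehat{ W } _h ^{0, \mathrm{nor}} (e) $, and its tangential trace is a $ 1 $-form lying in $ \mathcal{P} _{ 2 r -1 } \Lambda ^1 (e) = \widehat{ W } _h ^{1, \mathrm{tan}} (e) $; thus $ W _h ^{0, \mathrm{nor}} \subset \widehat{ W } _h ^{0, \mathrm{nor}} $ and $ W _h ^{1, \mathrm{tan}} \subset \widehat{ W } _h ^{1, \mathrm{tan}} $. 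By \cref{def:jump_avg}, a normal jump is always tangentially single-valued and vanishes on $ \partial \Omega $, and likewise a tangential average; combined with the equality of trace spaces at $ e ^\pm $ assumed in the setup, this gives $ \llbracket W _h ^{0, \mathrm{nor}} \rrbracket \subset \ringhat{V} _h ^{0, \mathrm{nor}} $ and $ \av{ W _h ^{1, \mathrm{tan}} } \subset \ringhat{V} _h ^{1, \mathrm{tan}} $, as needed.

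I expect no real obstacle. The only point requiring care is matching the abstract trace spaces of \cref{sec:multisymplectic_hybrid} to the vector/scalar proxy identifications in $ \mathbb{R} ^2 $, so that "normal trace of a $ 1 $-form'' genuinely is a $ 0 $-form on the facet and "tangential trace'' a $ 1 $-form, and confirming that the polynomial degrees line up with the $ \mathcal{P} _{ 2 r -1 } $ trace spaces of \cref{ex:bcs}. Everything else is already packaged into the preceding theorem, so the corollary follows immediately from it together with the elementary inclusions above.
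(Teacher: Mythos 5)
Your proposal is correct and takes essentially the same route as the paper, which states this corollary without a separate proof as an immediate consequence of the preceding theorem applied to the spaces of \cref{ex:bcs} with piecewise-constant penalties. Your verification of the two inclusions $\llbracket W _h ^{0,\mathrm{nor}} \rrbracket \subset \ringhat{V} _h ^{0,\mathrm{nor}}$ and $\av{ W _h ^{1,\mathrm{tan}} } \subset \ringhat{V} _h ^{1,\mathrm{tan}}$---via the observation that normal and tangential traces of $\mathcal{P}_{2r-1}\Lambda^1(K)$ land in the degree-$(2r-1)$ facet spaces, together with the single-valuedness and boundary-vanishing of jumps and averages---is exactly the degree-counting argument the paper leaves implicit.
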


Finally, we discuss some special instances in which the primal
variational principle \eqref{eq:bcs} can be written in the first-order
form of \cref{sec:multisymplectic_hybrid}. We have already shown, in
\cref{rmk:bcs_tan} that the method can be hybridized in terms of
global variables $ \widehat{ \sigma } _h ^{\mathrm{tan}} $ and
$ \widehat{ u } _h ^{\mathrm{tan}} $, rather than
$ \widehat{ u } _h ^{\mathrm{nor}} $ and
$ \widehat{ u } _h ^{\mathrm{tan}} $, under a strong conservativity
assumption. It remains to discuss the conditions under which the
primal equation \eqref{eq:bcs_v} can be written in the mixed
first-order form \eqref{eq:weakform_HL}.

\begin{proposition}
  \label{prop:bcs_first-order}
  Suppose that the spaces $ W _h ^{ k \pm 1 } $ satisfy the inclusions
  \begin{equation*}
    \delta W _h ^k \subset W _h ^{ k -1 } , \qquad ( W _h ^{ k -1 } ) ^{\mathrm{tan}} \subset \overline{ W } _h ^{k-1, \mathrm{tan}} , \qquad  \mathrm{d} W _h ^k \subset W _h ^{ k +1 } , \qquad ( W _h ^{ k +1 } ) ^{\mathrm{nor}} \subset \overline{ W } _h ^{k, \mathrm{nor}} .
  \end{equation*}
  Then solutions to the primal-hybrid problem \eqref{eq:bcs} are
  equivalent to those for the first-order mixed-hybrid problem with
  \eqref{eq:bcs_v} replaced by \eqref{eq:weakform_HL}, where
  $ \sigma _h = \delta u _h $ and $ \rho _h = \mathrm{d} u _h $.
\end{proposition}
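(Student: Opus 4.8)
The plan is to set up a bijection between the two solution sets by the obvious correspondence: given a solution of the primal-hybrid problem \eqref{eq:bcs}, adjoin the auxiliary variables $ \sigma _h \coloneqq \delta u _h $ and $ \rho _h \coloneqq \mathrm{d} u _h $, which lie in $ W _h ^{ k -1 } $ and $ W _h ^{ k +1 } $ respectively precisely because $ \delta W _h ^k \subset W _h ^{ k -1 } $ and $ \mathrm{d} W _h ^k \subset W _h ^{ k +1 } $; conversely, given a solution of the first-order mixed-hybrid problem, simply discard $ \sigma _h $ and $ \rho _h $. All the remaining unknowns ($ u _h $, $ \overline{ \sigma } _h ^{\mathrm{tan}} $, $ \overline{ \rho } _h ^{\mathrm{nor}} $, $ \widehat{ u } _h ^{\mathrm{nor}} $, $ \widehat{ u } _h ^{\mathrm{tan}} $), the definitions of the numerical fluxes $ \widehat{ \sigma } _h ^{\mathrm{tan}} $, $ \widehat{ \rho } _h ^{\mathrm{nor}} $, and the conservativity conditions \eqref{eq:bcs_taubar}--\eqref{eq:bcs_vtan} are shared verbatim by the two formulations. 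Hence it suffices to check: (i) that the primal equation \eqref{eq:bcs_v} coincides with the middle equation of \eqref{eq:weakform_HL} once $ \sigma _h = \delta u _h $ and $ \rho _h = \mathrm{d} u _h $; and (ii) that the first and third equations of \eqref{eq:weakform_HL}, which weakly assert $ \sigma _h = \delta u _h $ and $ \rho _h = \mathrm{d} u _h $, are equivalent (given the other hypotheses) to the definitions $ \sigma _h \coloneqq \delta u _h $, $ \rho _h \coloneqq \mathrm{d} u _h $.

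Point (i) is immediate by inspection. For point (ii), I would use the summed integration-by-parts identity \eqref{eq:ibp}, namely $ ( u _h , \mathrm{d} \tau _h ) _{ \mathcal{T} _h } = ( \delta u _h , \tau _h ) _{ \mathcal{T} _h } + \langle \tau _h ^{\mathrm{tan}} , u _h ^{\mathrm{nor}} \rangle _{ \partial \mathcal{T} _h } $ for $ \tau _h \in W _h ^{ k -1 } $. Substituting this into the first equation of \eqref{eq:weakform_HL} rearranges it to
\[
  ( \sigma _h - \delta u _h , \tau _h ) _{ \mathcal{T} _h } = \langle \tau _h ^{\mathrm{tan}} , u _h ^{\mathrm{nor}} - \widehat{ u } _h ^{\mathrm{nor}} \rangle _{ \partial \mathcal{T} _h } , \qquad \forall \tau _h \in W _h ^{ k -1 } .
\]
Since $ \tau _h ^{\mathrm{tan}} \in ( W _h ^{ k -1 } ) ^{\mathrm{tan}} \subset \overline{ W } _h ^{k-1, \mathrm{tan}} $, the conservativity condition \eqref{eq:bcs_taubar} makes the right-hand side vanish; as $ \sigma _h - \delta u _h \in W _h ^{ k -1 } $, testing against $ \tau _h = \sigma _h - \delta u _h $ forces $ \sigma _h = \delta u _h $. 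Reading the same chain of equalities backwards shows that, conversely, defining $ \sigma _h \coloneqq \delta u _h $ yields the first equation of \eqref{eq:weakform_HL}. The argument for $ \rho _h $ is entirely parallel, using instead $ ( u _h , \delta \eta _h ) _{ \mathcal{T} _h } = ( \mathrm{d} u _h , \eta _h ) _{ \mathcal{T} _h } - \langle u _h ^{\mathrm{tan}} , \eta _h ^{\mathrm{nor}} \rangle _{ \partial \mathcal{T} _h } $, the inclusion $ ( W _h ^{ k +1 } ) ^{\mathrm{nor}} \subset \overline{ W } _h ^{k, \mathrm{nor}} $, and the conservativity condition \eqref{eq:bcs_etabar}, to get $ \rho _h = \mathrm{d} u _h $ and the third equation of \eqref{eq:weakform_HL}.

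The computation is routine; the only step demanding care is the bookkeeping—verifying that the two formulations genuinely share all auxiliary variables, flux definitions, and conservativity conditions, so that the passage $ ( u _h , \dots ) \leftrightarrow ( u _h , \dots , \sigma _h , \rho _h ) $ is well defined and invertible. No analytic obstacle is expected: the four hypothesized inclusions are exactly what is needed to (a) place $ \delta u _h $ and $ \mathrm{d} u _h $ in the discrete spaces and (b) annihilate, via \eqref{eq:bcs_taubar} and \eqref{eq:bcs_etabar}, the inter-element trace terms produced by integration by parts.
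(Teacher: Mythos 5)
Your proof is correct and follows essentially the same route as the paper: integrate the first and third equations of \eqref{eq:weakform_HL} by parts, use \eqref{eq:bcs_taubar}--\eqref{eq:bcs_etabar} together with the trace inclusions to annihilate the resulting boundary terms, and use $\delta W_h^k \subset W_h^{k-1}$, $\mathrm{d} W_h^k \subset W_h^{k+1}$ to conclude $\sigma_h = \delta u_h$ and $\rho_h = \mathrm{d} u_h$. The only difference is that you spell out the bookkeeping of the solution-set bijection and the reverse implication, which the paper leaves implicit.
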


\begin{proof}
  Integrating by parts, \eqref{eq:weakform_tau} becomes
  \begin{equation*}
    ( \sigma _h , \tau _h ) _{ \mathcal{T} _h } = ( \delta u _h , \tau _h ) _{ \mathcal{T} _h } - \langle \widehat{ u } _h ^{\mathrm{nor}} - u _h ^{\mathrm{nor}} , \tau _h ^{\mathrm{tan}} \rangle _{ \partial \mathcal{T} _h } .
  \end{equation*}
  Since
  $ ( W _h ^{ k -1 } ) ^{\mathrm{tan}} \subset \overline{ W } _h
  ^{k-1, \mathrm{tan}} $, the boundary term vanishes by
  \eqref{eq:bcs_taubar}, and since
  $ \delta W _h ^k \subset W _h ^{ k -1 } $, we conclude that this
  holds if and only if $ \sigma _h = \delta u _h $. Similarly,
  \eqref{eq:weakform_eta} holds if and only if
  $ \rho _h = \mathrm{d} u _h $, using \eqref{eq:bcs_etabar} and the
  assumed inclusions for $ W _h ^{ k + 1 } $.
\end{proof}

For the method of \citep{BaCaSt2024} discussed in \cref{ex:bcs}, these
inclusions hold if and only if $ r = 1 $. Recall that we cannot simply
take $ W _h ^1 (K) = \mathcal{P} _r \Lambda ^1 (K) $ for $ r > 1 $,
since the analysis requires the existence of a projection commuting
with both $ \delta $ and $ \mathrm{d} $ (and numerical experiments
reveal inconsistency otherwise on domains admitting corner
singularities). However, we could take $ W _h ^1 (K) $ to be the
order-$r$ Brenner--Sung--Mirebeau space \citep{BrSu2009,Mirebeau2012},
which consists of $ \mathcal{P} _r \Lambda ^1 (K) $ plus
\emph{harmonic} polynomial $1$-forms up to degree $ 2 r -1 $; see also
\citep[Section 2.2]{BaCaSt2024}. This yields a variant of the method
in \citep{BaCaSt2024} that satisfies the hypotheses of
\cref{prop:bcs_first-order} for arbitrary $r \geq 1$, since the
higher-degree harmonic polynomials are in the kernel of both
$ \delta $ and $ \mathrm{d} $.

This suggests a possible generalization where
$ W _h ^{ k \pm 1 } (K) = \mathcal{P} _{ r -1 } \Lambda ^k (K) $, and
where $ W _h ^k (K) $ consists of $ \mathcal{P} _r \Lambda ^k (K) $
plus harmonic polynomial $k$-forms up to sufficiently high degree. To
our knowledge, though, the construction of such an element with a
commuting projection when $ n > 2 $ and $ r > 1 $ remains an open
problem, even in the case $ n = 3 $. (\citet{BrSu2009} conjectured a
straightforward extension of the $ n = 2 $ element, but
\citet{Mirebeau2012} found a counterexample.) Another challenge when
$ n > 2 $ would be the construction of suitable penalties to ensure
consistency with singular solutions; when $ n = 3 $, both edge and
vertex singularities are possible on $ \partial \Omega $.

\section{Conclusion}

In this paper, we have developed a unified framework within which
hybrid FEEC methods may be applied to canonical systems of PDEs
involving differential forms, characterizing those methods whose
numerical traces satisfy a multisymplectic conservation law when
applied to Hamiltonian systems. This substantially expands on the
previous work of \citet{McSt2020} for de~Donder--Weyl systems, which
only involve the operators $ \operatorname{grad} $ and
$ \operatorname{div} $, to include the more general Hamiltonian
systems of \citet{Bridges2006} involving the exterior differential and
codifferential. Such methods may therefore be applied to a larger
class of problems---including Hodge--Dirac and Hodge--Laplace
problems, Stokes flow, and Maxwell's equations---for which
multisymplecticity corresponds to important local symmetry/reciprocity
relations satisfied by boundary values.

In a forthcoming paper, we plan to extend the framework to hybrid FEEC
semidiscretization of time-dependent Hamiltonian systems, with
applications including the Hodge wave equation and time-domain Maxwell
equations, along similar lines by which \citep{McSt2024} applied the
techniques of \citep{McSt2020} to semidiscretize time-dependent
de~Donder--Weyl systems. Another interesting direction for future
research is to explore the numerical consequences of (weak/strong)
multisymplecticity---and whether this leads to numerical advantages
for, say, HDG methods over the conforming AFW method when applied to
Hamiltonian systems.

\footnotesize

\end{document}